\title{Special Lagrangian pair of pants}
\author{Yang Li}
\date{\today}
\newtheorem{thm}{Theorem}[section]
\newtheorem{lem}[thm]{Lemma}
\theoremstyle{definition}
\newtheorem{cor}[thm]{Corollary}
\newtheorem{rmk}[thm]{Remark}
\newtheorem{prop}[thm]{Proposition}
\newtheorem*{Notation}{Notation}
\newtheorem*{Acknowledgement}{Acknowledgement}
\newcommand{\ie}{\emph{i.e.} }
\newcommand{\cf}{\emph{cf.} }
\newcommand{\R}{\mathbb{R}}
\newcommand{\C}{\mathbb{C}}
\newcommand{\Z}{\mathbb{Z}}
\newcommand{\norm}[1]{\left\lVert#1\right\rVert}
\DeclareMathOperator{\Tr}{Tr}
\def\Xint#1{\mathchoice
	{\XXint\displaystyle\textstyle{#1}}%
	{\XXint\textstyle\scriptstyle{#1}}%
	{\XXint\scriptstyle\scriptscriptstyle{#1}}%
	{\XXint\scriptscriptstyle\scriptscriptstyle{#1}}%
	\!\int}
\def\XXint#1#2#3{{\setbox0=\hbox{$#1{#2#3}{\int}$ }
		\vcenter{\hbox{$#2#3$ }}\kern-.6\wd0}}
\def\dashint{\Xint-}
\begin{document}
	\maketitle

\begin{abstract}
We construct special Lagrangian pair of pants in general dimensions, inside the cotangent bundle of $T^n$ with the Euclidean structure. 
\end{abstract}

\section{Introduction}

A familiar picture on Riemann surfaces is the \emph{pair of pants decomposition}. Each pair of pants is diffeomorphic to the thrice punctured $S^2$, and a general Riemann surface can be topologically built from gluing pair of pants along cylinders. This perspective is fruitful in many applications, such as 
the tropical degeneration of holomorphic curves \cite{Mikhalkin2}, and the cuspidal degeneration of hyperbolic metrics.

The long term goal of our project is to 
produce a large supply of \emph{special Lagrangian submanifolds} inside complex $n$-dimensional Calabi-Yau manifolds $(X,\omega,\Omega)$ with SYZ fibrations, by gluing together some (hitherto unknown) higher dimensional analogues of pair of pants, via some combinatorial pattern prescribed by a tropical hypersurface. Here special Lagrangians of phase $\hat{\theta}$ are real $n$-dimensional submanifolds with
\[
\omega|_\mathbb{L}=0,\quad \text{Im}(e^{-i\hat{\theta}}\Omega)|_\mathbb{L}=0.
\]
A fundamental observation of Harvey-Lawson \cite{HarveyLawson1} is that these are minimal surfaces. The basic challenge of the field is that the non-perturbative construction techniques are quite limited, and the simplest case of constructing new special Lagrangians inside the Euclidean $T^*T^n$ already requires new inputs.

In the case $n=2$, the local model for special Lagrangian pair of pants can be obtained through the hyperk\"ahler rotation trick (\cf (\ref{2Dsolution}) below). The main output of this paper is to construct the higher dimensional analogue for the special Lagrangian pair of pants, building upon a combination of PDE techniques developed by Caffarelli-Nirenberg-Spruck and many subsequent authors \cite{Caff}\cite{HarveyLawson}\cite{Yuan}\cite{Yuan2}, and some minimal surface theory.

It is worth pointing out that Matessi \cite{Matessi} has considered the `soft' version of our problem involving Lagrangians,  and we shall give an impressionistic review of  some of his main ideas.

\subsection{Matessi's work on Lagrangian pair of pants}

Matessi is motivated by the mirror symmetry dual construction to Mikhalkin's tropical-to-complex correspondence \cite{Mikhalkin}%\cite{Mikhalkin2}
. In the B-side story, one starts with a tropical hypersurface $\Gamma\subset \R^n$ with some smoothness assumptions, and aims to produce a 1-parameter family of algebraic hypersurfaces $Y_t$ in $(\C^*)^n$, whose images under the projection 
\[
\text{Log}_t: (\C^*)^n\to \R^n,\quad (z_1,\ldots z_n)\to \frac{1}{\log t} (\log |z_1|,\ldots , \log |z_n|)
\]
(known as the `amoeba')
converges to $\Gamma$ in the Hausdorff distance. In a more refined description, there is a piecewise linear lift $\hat{\Gamma}\subset (\C^*)^n$ (known as the `phase tropical hypersurface'), which has a singular fibration map to $\Gamma\subset \R^n$ with fibres being the closures of `coamoebas'.
The phase tropical hypersurface $\hat{\Gamma}$ captures the leading asymptote of $Y_t$ for $t\ll 1$, and 
as a topological manifold $\hat{\Gamma}$ is homeomorphic to $Y_t$.

Here the \emph{coamoeba} is just a subset of $T^n$
consisting of two copies of an  $n$-dimensional open simplex glued at the $(n+1)$ vertices. Let
\begin{equation}
\Delta_n= \{ 0\leq x_i\leq \pi, \forall i=1,\ldots n,\quad \sum_1^n x_i\leq \pi\}, \quad -\Delta_n=\{ x\in T^n|-x\in \Delta_n\},
\end{equation}
then the standard coamoeba is
\begin{equation}\label{coamoeba}
C_{std}= \text{Int}(\Delta_n\cup -\Delta_n)\cup \{ \text{vertices of $\Delta_n$} \}.
\end{equation}
Notice that $\Delta_n, -\Delta_n$ share the vertices $(0,\ldots 0), (\pi,0,\ldots 0), \ldots (0,\ldots 0,\pi)\in T^n$. The boundary of a coamoeba consists of lower dimensional coamoebas.

Matessi's goal is to imitate this picture in the Lagrangian setting (\ie the A-side). Given a tropical hypersurface $\Gamma$ in $\R^n$, one can construct a Lagrangian piecewise-linear (PL) lift $\hat{\Gamma}\subset (T^*T^n=(\R/2\pi\Z)^n\times \R^n, \omega=\sum dx_i\wedge dy_i)$ roughly as follows. 
The tropical hypersurface $\Gamma$ is a union of $(n-1)$-dimensional rational polyhedra glued along faces, with a balancing condition at the intersecting faces. The PL lift $\hat{\Gamma}$ is a piecewise linear object inside $T^*T^n$ projecting to $\Gamma$. Given a point $y$ inside the interior of an $(n-1)$-dimensional face of $\Gamma$, the fibre over $y$ in $\hat{\Gamma}$ is the $S^1\subset T^n$ spanned by the $\omega$-orthogonal complement direction to the $(n-1)$-dimensional face. More generally, if $y$ is a point in the interior of the $k$-dimensional face $\Gamma_J$ of $\Gamma$, then the fibre is the closure $E_J$ of an $(n-k)$-dimensional coamoeba, which lives inside the subtorus $T^{n-k}\subset T^n$ spanned by the $\omega$-orthogonal complement of the face. The PL lift is then
\[
\hat{\Gamma} = \cup_J \Gamma_J\times E_J.
\]
The $\omega$-orthogonal complement property guarantees that it is a singular Lagrangian. The correspondence between the faces of the tropical hypersurface and the coamoebas is inclusion-reversing, so that the boundary components of  $\Gamma_J\times E_J$ cancel out.

The main problem is that $\hat{\Gamma}$ is singular, and one would like to find smooth Lagrangians which approximate $\hat{\Gamma}$. Matessi \cite{Matessi} constructs the following local model, called the \emph{Lagrangian pair of pants}. Define the explicit function \footnote{Matessi used slightly different conventions for the dimension and the periodicity of the lattice.} on the interior of the coamoeba (\ref{coamoeba}), 
\[
 \begin{cases}
F(x)=(\sin(\frac{x_1}{2})\ldots \sin(\frac{x_n}{2})\sin( \frac{\pi-\sum_1^n x_i}{2}  )^{1/n}, & \quad \text{on } \text{Int} (\Delta_n),
\\
F(-x)= -F(x) , & \quad \text{on } \text{Int}(-\Delta_n),
\end{cases}
\]
and consider the graph of $dF$, which is a Lagrangian inside $T^*T^n$.
As $x$ tends to the boundary of the coamoeba, but not to one of the  $(n+1)  $ vertices, then $|dF|\to \infty$. This divergence has the important geometric meaning that the boundary of the coamoeba (except for the vertices) does not contribute to the finite distance boundary of the Lagrangian graph. To capture the limit as $x$ tends to the vertices, one performs the real blow up $\tilde{C}_{std}$ of the coamoeba $C_{std}$ at these vertices (\cf section \ref{Topologysection} below), and shows that $dF$ extends smoothly to $\tilde{C}_{std}$. This gives a 1-parameter family of smooth Lagrangian embedding of $\tilde{C}_{std}$  into $T^*T^n$ via the map $(x, \lambda dF(x))$ \cite[Lem 3.5]{Matessi}. Matessi calls the images $L_\lambda$ of this embedding the `rescaled $n$-dimensional Lagrangian pair of pants'.

In the case $n=2, 3$, Matessi \cite[Prop. 3.27]{Matessi} shows that the Lagrangian pair of pants is homeomorphic to the PL lift of the standard tropical hypersurface
\[
\Gamma_{std}= \text{non-smooth locus of } \max(0, y_1,\ldots y_n) \subset \R^n_y,
\]
and by analysing the asymptotic behaviour of $L_\lambda$, he shows that $L_\lambda$ converges as $\lambda\to 0$ in the Hausdorff topology to the PL lift $\hat{\Gamma}_{std}$ of $\Gamma_{std}$ \cite[Cor. 3.9]{Matessi}. He then proceeds to use these local models to construct Lagrangian smoothings of PL lifts of more general smooth tropical hypersurfaces, in the case of dimension $n=2, 3$, with some extensions to other toric manifolds \cite[Thm 1.1]{Matessi}. The same results are conjectured to hold in all higher dimensions, but the asymptotic behaviour near the boundary of the coamoeba will be combinatorially more complex.

\begin{rmk}
Matessi pointed out to the author that his subsequent paper \cite[section 9.4]{Matessi2} has speculated on the construction of special Lagrangian pair of pants, through an inductive solution of Dirichlet problems. The interplay between Lagrangian submanifolds and tropical geometry is a popular research topic, \cf \cite[Introduction]{Matessi2} for many other works.
\end{rmk}

\subsection{Special Lagrangian pair of pants}

We now give a summary of the main results. The ambient space is $T^*T^n$, with the Euclidean structure
\[
\begin{cases}
\omega= \sum dx_i\wedge dy_i,
\\
\Omega= \bigwedge_i (dx_i+ \sqrt{-1} g_{ij} dy_j),
\end{cases}
\]
where $x_i\in \R/2\pi \Z$ are the angle coordinates, and $y_i\in \R$ are the moment maps, and $(g_{ij})$ is an $n\times n $ positive definite symmetric matrix.

\begin{thm}
Let $n\geq 2$.
There exists a special Lagrangian $\mathbb{L}_n$ of phase $\hat{\theta}= \frac{n-1}{2}\pi$ inside $T^*T^n$, such that 
\begin{enumerate}
\item (Smoothness) 
$\mathbb{L}_n$ is an embedded submanifold of $T^*T^n$ without boundary (\cf Prop. \ref{integralcurrentnoboundary}, \ref{smoothness}).

    \item (Coamoeba)  Under the projection $T^*T^n\to T^n$, the image of $\mathbb{L}_n$ is the standard coamoeba $C_{std}$ (\cf Prop. \ref{supportL}).

    \item (Fibres) Over $\text{Int}(\Delta_n\cup -\Delta_n)$, the special Lagrangian $\mathbb{L}_n$ can be written as the graph of the gradient of some potential function $u_n$. Over each of the $(n+1)$-vertices, the preimage is an $(n-1)$-dimensional real analytic hypersurface in the contangent fibre, which can be realised as the boundary of a convex set in $\R^n$  (\cf Prop. \ref{supportL}, Cor. \ref{graphicalnearvertex}).

     \item (Topology) The projection $\mathbb{L}_n\to C_{std}$ lifts to a real analytic map to the real blow up $\mathbb{L}_n\to \tilde{C}_{std}$, which is a homeomorphism (\cf Prop. \ref{Topology}).

    \item (Tropical hypersurface) Under the projection $T^*T^n\to \R^n$, the image of $\mathbb{L}_n$ lies within bounded distance to the standard tropical hypersurface $\Gamma_{std}\subset \R^n_y$ (\cf Cor. \ref{tropicalhypersurface}).
\end{enumerate}

\end{thm}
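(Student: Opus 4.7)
The strategy is to produce $\mathbb{L}_n$ as the closure of the graph of $du_n$, where $u_n$ is a smooth potential on $\text{Int}(\Delta_n)$ solving the special Lagrangian equation
\[
\sum_{i=1}^n \arctan \lambda_i(D^2 u_n) = \frac{n-1}{2}\pi,
\]
then extended to $\text{Int}(-\Delta_n)$ by the antisymmetry $u_n(-x) = -u_n(x)$. The five geometric conclusions translate into the following asymptotic requirements on $u_n$: (i) $|du_n| \to \infty$ as $x$ approaches the relative interior of any proper face of $\partial\Delta_n$ that avoids some vertex, with the gradient direction matching the outward normal of the corresponding face of $\Gamma_{std}$; (ii) $du_n$ has a well defined cluster limit at each of the $(n+1)$ vertices, whose image is the boundary of a convex region in $\R^n$, matched across $\pm\Delta_n$ by the antisymmetric extension.

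The construction of $u_n$ would proceed by an exhaustion argument. First I would solve Dirichlet problems for the special Lagrangian equation on slightly shrunk subdomains $\Delta_n^\epsilon \subset \text{Int}(\Delta_n)$ with boundary data tending to $+\infty$ along the codimension one faces but remaining bounded near the vertices. Because $\hat\theta = \frac{n-1}{2}\pi$ is the \emph{supercritical phase} at which the special Lagrangian operator is concave in $D^2 u$, the Caffarelli-Nirenberg-Spruck theory \cite{Caff} together with its refinements due to Harvey-Lawson \cite{HarveyLawson} and Yuan \cite{Yuan, Yuan2} provide solvability and uniform interior $C^{2,\alpha}$ (in fact real analytic) estimates. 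Barriers enforcing the desired boundary profile would be built from the explicit $2$-dimensional pair of pants potential obtained by hyperk\"ahler rotation, pulled back to tubular neighbourhoods of each $(n-1)$-face via projection onto the orthogonal two plane, and glued to form sub/super\-solutions using concavity of the equation. Passing to the limit $\epsilon \to 0$, while imposing the $S_{n+1}$ symmetry permuting the vertices of $\Delta_n$, produces $u_n$ on the interior.

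Given $u_n$, I would read off the geometric properties as follows. Property (3) (graphicality and convex vertex fibres) on $\text{Int}(\Delta_n \cup -\Delta_n)$ is immediate from the construction, while convexity of the vertex fibre follows either from a constant rank principle adapted to the supercritical phase, or from direct comparison with the blow-up of the barrier at each vertex. Property (4) (topology) then follows because $du_n$ is real analytic on the interior with prescribed boundary profile, yielding a continuous bijection from the compact $\tilde{C}_{std}$ to $\mathbb{L}_n$, hence a homeomorphism. Property (1) (smoothness and no boundary) combines interior elliptic regularity on the graphical piece with a minimal surface / integral current analysis at each vertex fibre, exploiting the antisymmetric gluing to cancel would-be boundary currents. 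Property (2) is essentially tautological once the graph is established, and property (5) (tropical hypersurface) is read off from the growth rate of $du_n$ on the strata of $\partial\Delta_n$, which is pinned down by the barrier construction.

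The main obstacle is constructing $u_n$ with precisely this two sided boundary profile — infinite gradient along open codimension one faces, yet bounded and \emph{matching} gradient limits at the vertices — since standard Dirichlet theory for the special Lagrangian equation handles only finite boundary data. One must build sharp enough sub- and supersolutions on overlapping neighbourhoods of each stratum of $\partial\Delta_n$, and prove that the approximating solutions neither degenerate nor blow up at the vertices, while their gradients there match \emph{exactly} across the antisymmetric extension. A secondary technical hurdle is proving smoothness of $\mathbb{L}_n$ through the vertex fibres, where the gradient graph description breaks down and the special Lagrangian must be analysed intrinsically as a minimal submanifold.
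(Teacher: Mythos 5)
Your overall architecture matches the paper's: $\mathbb{L}_n$ is the closure of the graph of $du_n$ over $\text{Int}(\Delta_n\cup -\Delta_n)$ with the odd extension, the boundary current over the vertices cancels by the involution $x\mapsto -x$, $y\mapsto y$, smoothness through the vertex fibres is handled by minimal surface theory (tangent cones and Allard regularity), and properties (2), (4), (5) are read off from the boundary behaviour of $du_n$. However, the central step of your construction --- producing $u_n$ by solving Dirichlet problems on shrunk domains with boundary data tending to $+\infty$ along the open codimension one faces while remaining bounded near the vertices --- cannot work. The phase $\hat{\theta}=\frac{n-1}{2}\pi$ forces every Hessian eigenvalue of $u_n$ to be positive, i.e.\ $u_n$ is convex; but a convex function on $\Delta_n$ that is bounded near all $n+1$ vertices is bounded above near the barycenter of any facet (take interior points $q_i$ converging to the vertices $v_i$ of the facet and apply Jensen's inequality to $\frac{1}{n}\sum q_i$), so it cannot blow up on the relative interior of that facet. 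Your proposed two-sided boundary profile for the \emph{potential} is therefore self-contradictory.

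What actually happens in the paper is subtler and is the key point you are missing: $u_n$ is continuous and \emph{finite} up to $\partial\Delta_n$, with boundary data prescribed \emph{inductively} by the lower-dimensional solutions ($u_n=u_{n-1}$ on each facet, $u_1=0$ on edges, $0$ at vertices), and it is only the \emph{gradient} that diverges at the facets. This is consistent because the normal derivative behaves like $\log(\text{dist})$, which is integrable. Existence uses finite-data viscosity solutions of Harvey--Lawson on convex smoothings of $\Delta_n$ together with explicit sub/supersolutions built from $L(x_i)+L(x_j)-L(x_i+x_j)$ and $Kx\log x$ terms; the gradient divergence is then proved a posteriori by a delicate barrier argument yielding $u_n-u_m\leq -C^{-1}x_k|\log x_k|^{1/2}$ near the face $\Delta_m$, which is strictly stronger than the $O(x\log x)$ modulus of continuity and is what kills the boundary contribution of the facets to $\partial\mathbb{L}$. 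The inductive prescription of boundary data is also what makes $\mathbb{L}_n$ asymptotic to $\mathbb{L}_{n-1}\times\R_{y_n}$ (via a partial Legendre transform), which your scheme has no mechanism to produce. Two smaller issues: there is no $S_{n+1}$ symmetry to impose for general $g_{ij}$ (only the involution matters), and no exact ``matching of gradients at the vertices'' needs to be arranged --- the cancellation of $\partial\mathbb{L}$ is automatic because the boundary current involves only $dy_i$ factors, which are even under the involution while $\mathbb{L}\mapsto -\mathbb{L}$.
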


Morever the construction has an inductive pattern. The $n=1$ case is just the zero section $S^1\subset T^*S^1$.  The $n=2$ case is the pair of pants known from the hyperk\"ahler rotation trick.

\begin{thm}\label{asymptotic}(\cf section \ref{asymptoticproof})
Let $n\geq 2$. The special Lagrangian submanifold $\mathbb{L}_n$ satisfies that
\begin{enumerate}
    \item (Regularity)  Around any point on $\mathbb{L}_n$, there is an ambient ball of radius bounded below by a constant depending only on $n,g$, such that within the ball $\mathbb{L}_n$ is graphical over an $n$-plane, with $C^{k,\alpha}$ norm bounded by $C(n,g, k,\alpha)$. In particular, the $C^{k,\alpha}$-topology is well defined.

    \item (Inductive asymptote)  In the region where $y_n$  is sufficiently negative depending on $n, g$, the special Lagrangian $\mathbb{L}_n$ is the graph of a $C^{k,\alpha}$-small normal vector field over $\mathbb{L}_{n-1}\times \R_{y_n} $, and the local $C^{k,\alpha}$-norm 
    satisfies the exponential decay bound $O(e^{ y_n/C(n,g)})$ as $y_n$ tends to $-\infty$.  Similar asymptotes hold when one of $y_1,\ldots y_{n-1}, -\sum_1^n y_i$ is sufficiently negative. These regions cover the complement of a compact subset in $T^*T^n$.
\end{enumerate}

\end{thm}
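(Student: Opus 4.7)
The plan is to deduce both assertions in tandem: (1) uniform local regularity follows from Allard's $\varepsilon$-regularity combined with a Yuan-type Schauder bootstrap for the special Lagrangian potential; (2) the inductive asymptote follows from a normal-graph comparison of $\mathbb{L}_n$ with the product $\mathbb{L}_{n-1}\times\R_{y_n}$, where exponential decay is driven by the spectral gap of the Jacobi operator of $\mathbb{L}_{n-1}$, and the uniform estimates from (1) turn $C^0$-smallness into $C^{k,\alpha}$-decay.

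For (1), I use that $\mathbb{L}_n$ is calibrated and hence a multiplicity-one stationary integral varifold by the embeddedness in Prop.~\ref{smoothness}. The image in $T^n$ is contained in the bounded set $C_{std}$, and the image in $\R^n$ lies within bounded distance of $\Gamma_{std}$ by Cor.~\ref{tropicalhypersurface}; together these force a uniform mass bound on $\mathbb{L}_n\cap B_R(p)$ for any fixed $R$, depending only on $n$ and $g$. Monotonicity then yields uniform density-ratio bounds, and Allard's theorem produces a scale $r_0(n,g)>0$ on which $\mathbb{L}_n$ is a $C^{1,\alpha}$-small graph over some $n$-plane. Passing to the associated Lagrangian potential, the special Lagrangian equation is a second-order concave elliptic PDE in the supercritical phase regime $\hat\theta\ge\pi/2$, and the interior estimates of Caffarelli-Nirenberg-Spruck together with those of \cite{Yuan,Yuan2}, followed by Schauder bootstrap, upgrade $C^{1,\alpha}$-smallness to the claimed $C^{k,\alpha}$-bounds.

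For (2), the starting point is that the product $\mathbb{L}_{n-1}\times\R_{y_n}$ placed at $x_n=0$ is special Lagrangian of phase $\tfrac{n-2}{2}\pi+\tfrac{\pi}{2}=\tfrac{n-1}{2}\pi$, modulo a block-reduction of $g$ absorbing the last column of off-diagonal entries. A compactness argument using the tropical and coamoeba asymptotes from the first theorem forces $x_n\to 0$ along $\mathbb{L}_n$ as $y_n\to-\infty$, so that in the region $y_n\ll -1$ the submanifold $\mathbb{L}_n$ enters a fixed tubular neighbourhood of the product and can be written as a normal graph of a function $f$. The special Lagrangian equation for $f$ is a quasilinear elliptic PDE whose linearization equals $\partial_{y_n}^2+\mathcal{L}_{\mathbb{L}_{n-1}}$, where $\mathcal{L}_{\mathbb{L}_{n-1}}$ is the (non-negative, by calibration) Jacobi operator of $\mathbb{L}_{n-1}$. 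Separation of variables in $y_n$ combined with the positive spectral gap of $\mathcal{L}_{\mathbb{L}_{n-1}}$ on an exponentially weighted Hölder space yields $f=O(e^{y_n/C(n,g)})$ at the linear level, and the uniform $C^{k,\alpha}$-estimates of (1), applied on unit $y_n$-slabs, promote this to the claimed $C^{k,\alpha}$-decay. The analogous decay in each of the regions where one of $y_1,\ldots,y_{n-1},-\sum_1^n y_i$ is sufficiently negative follows by the $S_{n+1}$-symmetry permuting the vertices of $C_{std}$, and these $n+1$ regions exhaust the complement of a compact set in $T^*T^n$ since $\Gamma_{std}$ has exactly $n+1$ asymptotic rays.

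The main obstacle I anticipate is establishing the initial non-perturbative closeness of $\mathbb{L}_n$ to $\mathbb{L}_{n-1}\times\R_{y_n}$ that is needed to start the linearized analysis: the existence proof sketched earlier produces $\mathbb{L}_n$ as a limit of an auxiliary PDE/variational problem, and one must extract from that construction quantitative control (say a $C^0$-bound) on the shape at $y_n\to-\infty$, likely via an explicit barrier built from the lower-dimensional potential $u_{n-1}$. A related technical point is verifying the spectral gap of $\mathcal{L}_{\mathbb{L}_{n-1}}$ on a non-compact base with its own asymptotic ends; this has to be organized inductively in exponentially weighted Hölder spaces, with the base case $n=2$ anchored by $\mathbb{L}_1=S^1$ being compact and the gap explicit.
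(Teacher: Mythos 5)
Your proposal matches the paper's skeleton (Allard regularity for part 1, normal graph over $\mathbb{L}_{n-1}\times\R_{y_n}$ for part 2), but both of its key quantitative inputs are wrong or missing. First, for part 1: a uniform mass bound plus monotonicity does \emph{not} put you in the regime of Allard's theorem. Allard requires the density ratio $\mathcal{H}^n(\mathbb{L}_n\cap B(p,r))/(\omega_n r^n)$ to be at most $1+\epsilon_0$ for a small universal $\epsilon_0$; the coamoeba/tropical containment only gives $\mathrm{Mass}(\mathbb{L}_n\cap B(p,R))\leq C R^{n-1}$, i.e.\ a density ratio bounded by some constant $C\geq 1$, which is compatible with, say, a union of several planes and yields no graphicality. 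Since $\mathbb{L}_n$ is smooth, the density ratio does drop below $1+\epsilon_0$ at \emph{some} scale around each point, but the whole difficulty is the uniformity of that scale as the point escapes to infinity. The paper resolves this inductively: at a fixed scale $r_0(n,g)$ the density ratio of $\mathbb{L}_{n-1}\times\R$ is $\leq 1+\epsilon_0/3$ by the inductive regularity, the part of $\mathbb{L}_n$ away from the fibres over the vertices is an exponentially small graph over it (via the partial Legendre transform estimates), and the part over a $\delta$-neighbourhood of the vertices has volume $\leq Cr_0^{n-1}\delta\ll \epsilon_0 r_0^n$ (Lemma \ref{vertexvolume}); only the sum of these three contributions gets you under the Allard threshold.

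Second, and more seriously, the spectral-gap mechanism you propose for the exponential decay in part 2 fails for $n\geq 3$: the cross-section $\mathbb{L}_{n-1}$ is non-compact with Euclidean-type cylindrical ends, so its Jacobi operator has essential spectrum reaching down to $0$ (and bounded non-decaying Jacobi fields from translations). Separation of variables for $\partial_{y_n}^2+\mathcal{L}_{\mathbb{L}_{n-1}}$ then admits bounded solutions on $\{y_n\leq 0\}$ decaying like $e^{\lambda y_n}$ for arbitrarily small $\lambda>0$ (compare $e^{\epsilon t}\cos(\epsilon x)$ on a half-plane), so no uniform rate $C(n,g)$ can be extracted from the linearised operator alone; your own caveat about "verifying the spectral gap on a non-compact base" is precisely the point at which the argument breaks, not a technicality. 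The paper's decay rate comes from an entirely different source: the boundary modulus of continuity $O(x\log x)$ of $u_n$ gives the pointwise bound $x_i\leq e^{y_i/C}$, which in the partial Legendre coordinates yields $C^0$ (hence, by Savin's theorem and bootstrapping, $C^{k,\alpha}$) exponential closeness of $u_{n,m}$ to $u_{n-1,m}$ away from the vertices; the vertex neighbourhoods are then absorbed by an iteration of the form $f(R+r_0)\leq C'e^{-R/C}+(C'\delta r_0^{-1})f(R)$ using the volume smallness, with $\delta\ll r_0$ making the coefficient less than one. You correctly flag, as an "anticipated obstacle", the need for an initial quantitative $C^0$ comparison built from $u_{n-1}$ — but in the paper this comparison is not a preliminary step; it is itself the origin of the exponential rate, and without it your argument has no starting point and no rate.
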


The basic strategy is to inductively solve a sequence of Dirichlet problems depending on $n$, for the special Lagrangian graph equation over the $n$-dimensional simplex $\Delta_n$. In each step, the lower dimensional solutions are utilised to prescribe the boundary data of the $n$-dimensional problem. The main work is to prove the existence and the properties of these PDE solutions $u_n$, and interpret the results geometrically.

From the PDE perspective the principal novelty is the behaviour of the solution near the domain boundary. While the regularity theory for the convex solutions to the special Lagrangian graph equation is well established, the known global regularity results almost inevitably relies on some \emph{strict (pseudo)convexity} assumption on the boundary of the domain, so that one can build some barrier function out of the boundary defining function, to achieve boundary Lipschitz estimates and higher order derivative estimates \cite{Caff}\cite{HarveyLawson}. In contrast, the domain in our setting is the simplex $\Delta_n$, where the strict boundary convexity fails, so both the existence and the properties  of solutions require new types of barrier constructions.

In the transition from PDE to geometry, an important step is to show that \emph{the gradient $du_n(x)$ diverges to infinity} as $x$ tends to any point of $\partial\Delta_n$ except for the  $(n+1)$ vertices (\cf Cor. \ref{gradientdivergence}). This behaviour is opposite to what happens in the standard theory for smooth and strictly convex domains, and the proof involves a delicate barrier argument. To obtain the special Lagrangian $\mathbb{L}_n$, we take a reflected copy of the solution over $-\Delta_n$, and take the \emph{closure} of the special Lagrangian graph over $\text{Int}(\Delta_n\cup (-\Delta_n))$. The gradient divergence then has the geometric significance that $\partial \Delta_n$ (except for the vertices) has no contribution to the boundary of the special Lagrangian as an integral current. However, the $(n+1)$ vertices shared by $\Delta_n$ and $-\Delta_n$ bring in extra points in the step of taking the closure, and the happy fact is that their contributions to the current boundary $\partial \mathbb{L}_n$ \emph{cancel} exactly by an involution symmetry.

%To further understand the asymptotics of the special Lagrangian, we switch to the geometric perspective, which affords more flexibility on coordinates. % In the PDE language, this roughly corresponds to partial Legendre transforms. 

The asymptotic information of the special Lagrangian  corresponds to the boundary behaviour of $u_n$ in the PDE perspective.  Our boundary behaviour is complicated from the original PDE viewpoint: for instance, an infinite amount of volume of the special Lagrangian is concentrated near $\partial \Delta_n$. This complexity is partly caused by describing the special Lagrangian using the domain coordinates on $\Delta_n$, and the geometric perspective affords the flexibility to use alternative coordinate systems. Away from the vertices, each boundary face is associated with a new coordinate chart mixing the position and momentum variables, which analytically corresponds to taking \emph{partial Legendre transform}, with the effect that the special Lagrangian $\mathbb{L}_n$ is a small graph over the new coordinates. The behaviour near the vertices is more delicate, and to prove the \emph{smoothness} of $\mathbb{L}_n$ and the \emph{asymptotic exponential decay} estimates, we make use of minimal surface theory.

\begin{Notation}
In this paper $C\geq 1$ will stand for constants which depend only on the dimension and the ambient metric on $T^*T^n$, and do not depend on the other auxiliary parameters, unless explicitly indicated otherwise. The notation $a\lesssim b$ means $a\leq Cb$ for some estimable constant $C$.
Since this paper will involve many auxiliary barrier constructions depending on multiple parameters, the notations involved in each barrier will be localised to the subsection, and then recycled in later subsections. The $C^{k,\alpha}$-norms will always refer to bounded balls inside local charts around a given point, rather than taking the global supremum over the special Lagrangian.

\end{Notation}

\begin{Acknowledgement}
The author is a current Clay Maths Research Fellow, based at MIT. He thanks D. Matessi for comments, and Y-S. Lin, S. Esfahani, and I. Zharkov for discussions on related topics.
\end{Acknowledgement}

\section{Dirichlet problem}

\subsection{Special Lagrangian graph equation}\label{specialLaggrapheqnsection}

We look for special Lagrangians inside the the cotangent bundle of $T^n$ with the following Euclidean structure:
\[
\begin{cases}
\omega= \sum dx_i\wedge dy_i,
\\
\Omega= \bigwedge_i (dx_i+ \sqrt{-1} g_{ij} dy_j),
\end{cases}
\]
where $x_i\in \R/2\pi \Z$ are the angle coordinates, and $y_i\in \R$ are the moment maps, and $(g_{ij})$ is an $n\times n $ positive definite symmetric matrix. In particular, the $T^n$ fibres are special Lagrangians of phase zero.

We consider special Lagrangians which are graphical, namely $y_i$ can be expressed as the gradient of a potential
\[
y_i= \frac{\partial u}{\partial x_i}. 
\]
The special Lagrangian condition with phase $\hat{\theta}$ means that $\text{Im} (e^{-\sqrt{-1}\hat{\theta} }\Omega)=0$, and translates into the second order equation
\begin{equation}\label{specialLaggraph1}
\text{Im} (e^{-\sqrt{-1}\hat{\theta} }\det \left(  I+ \sqrt{-1} g_{ik} \frac{\partial^2 u}{\partial x_k\partial x_j} \right) ) =0.
\end{equation}

The case where $g_{ij}=\delta_{ij}$ is the familiar \emph{special Lagrangian graph equation}
\[
\text{Im} (e^{-\sqrt{-1}\hat{\theta} }\det \left(  I+ \sqrt{-1} D^2 u \right) ) =0.
\]
In fact we can reduce the equation to the standard form by a linear change of coordinates. To see this, let $(a_{ij})\in GL_+(n, \R) $ with inverse matrix $(a^{ij})$, and
\[
x_i= a_{ij} \tilde{x}_i, \quad y_i= a^{ji} \tilde{y}_j,
\]
so that $\omega= \sum d\tilde{x}_i\wedge d\tilde{y}_i$, and $\tilde{y}_i= \frac{\partial u}{\partial \tilde{x}_i} $.
We find
\[
\Omega= \det(a) \bigwedge_i (d\tilde{x}_i+ \sqrt{-1} \tilde{g}_{ij} d\tilde{y}_j),\quad \tilde{g}_{lk}= a^{li}g_{ij}a^{kj}.
\]
We can find the matrix $(a_{ij})$ so that $\tilde{g}_{ij}=\delta_{ij}$, since all positive definite quadratic forms are congruent to the standard form. The special Lagrangian graph equation in the $\tilde{x}_i$ coordinates is then in the standard form.

It is well known that the special Lagrangian consists of phase branches. 
In the coordinate independent language, the Hessian of $u$ is a symmetric quadratic form, and we can define an operator $A$ by 
\[
D^2 u (v_1, v_2)= g(Av_1, v_2).
\]
Since $A$ is self-adjoint with respect to  the inner product $g=g^{ij} dx_idx_j$, its eigenvalues are real:
\[
\lambda_1\leq \lambda_2\leq \ldots \leq \lambda_n,
\]
and the special Lagrangian graph equation can be written as
\[
\arctan D^2 u:=\sum_1^n \arctan \lambda_i= \hat{\theta} \mod \pi \Z.
\]

The analytic behaviour of the equation
\begin{equation}\label{specialLaggraph2}
\arctan D^2  u= \hat{\theta}
\end{equation}
depends strongly on the range of phase angle $\hat{\theta}$. The range of arctan forces 
 $\hat{\theta}\in (-\frac{n\pi}{2}, \frac{n\pi}{2})$, and up to replacing $u$ by $-u$ we only need to consider $\hat{\theta}\geq 0$. A very incomplete list of notable results include:
\begin{itemize}
    \item  In the range $\hat{\theta}\in [\frac{n-1}{2}\pi, \frac{n}{2}\pi)$, so that $u$ is convex, Caffarelli-Nirenberg-Spruck \cite{Caff} proved the existence and uniqueness of solution for smooth boundary data, over smooth and strictly convex bounded domains in $\R^n$.

    \item In the `supercritical' range $\hat{\theta} \in [\frac{n-2}{2}\pi, \frac{n}{2}\pi)$, Wang-Yuan \cite{Yuan2} proved the a priori interior Hessian estimate 
    \[
    \max_{B_R} |D^2 u| \leq C(n, \text{osc}_{B_{2R}}( \frac{u}{R} ) ),
    \]
which implies the interior real analyticity of viscosity solutions. The convex case was previously settled in \cite{Yuan}. 

\item For all phases $\hat{\theta}\in (-\frac{n}{2}\pi, \frac{n}{2}\pi)$, Harvey-Lawson \cite{HarveyLawson} proved the existence and uniqueness of $C^0$ viscosity solution for any continuous boundary data, over smooth bounded domains whose boundary is pseudoconvex in an appropriate sense.

\item 
When $|\hat{\theta}|< \frac{n-2}{2}\pi$ the regularity property of the viscosity solution can be very bad. A very recent sample result by Mooney-Savin \cite{MooneySavin} constructed Lipschitz viscosity solutions in 3D which fails to be $C^1$.

\end{itemize}

We will be interested in solving the equation over the simplex, which is neither smooth nor \emph{strictly} convex, so some care is needed in applying the known existence results.

\subsection{The dimension two case}

We now consider the special case $n=2$, with the special choice of phase angle $\hat{\theta}= \frac{\pi}{2}$. We shall use the hyperk\"ahler rotation trick to find the special solution. Notice that
\[
\sqrt{ g_{11} g_{22}-g_{12}^2 } \omega, \text{Re}{\Omega}, \text{Im} \Omega
\]
form a hyperk\"ahler triple. The special Lagrangian condition is equivalent to being holomorphic with respect to the complex structure defined by the (2,0)-form
$\sqrt{ g_{11} g_{22}-g_{12}^2 } \omega+ \sqrt{-1} \text{Re} \Omega $. We find holomorphic coordinates with respect to this new complex structure
\[
\begin{cases}
z_1= \exp(   \sqrt{ g_{11} g_{22}-g_{12}^2 } y_2-\sqrt{-1} x_1),
\\
z_2= \exp(   \sqrt{ g_{11} g_{22}-g_{12}^2 } y_1+\sqrt{-1} x_2),
\end{cases}
\]
so that $T^*T^2$ can be identified with $(\C^*)^2_{z_1, z_2}$. The standard pair of pants surface is
\[
\{  z_1+ z_2=1\} \subset (\C^*)^2.
\]
Solving  $y_1, y_2$ in terms of $x_1, x_2$ from the equation
\[
\begin{cases}
\exp(   \sqrt{ g_{11} g_{22}-g_{12}^2 } y_2) e^{\sqrt{-1} x_1} +  \exp(   \sqrt{ g_{11} g_{22}-g_{12}^2 } y_1) e^{-\sqrt{-1} x_2}=1,
\\
\exp(   \sqrt{ g_{11} g_{22}-g_{12}^2 } y_2) e^{-\sqrt{-1} x_1} +  \exp(   \sqrt{ g_{11} g_{22}-g_{12}^2 } y_1) e^{\sqrt{-1} x_2}=1,
\end{cases}
\]
we obtain
\[
y_1=  \frac{1}{  \sqrt{ g_{11} g_{22}-g_{12}^2 }   } \log \frac{\sin x_1}{ \sin (x_1+x_2)}  ,\quad y_2=  \frac{1}{  \sqrt{ g_{11} g_{22}-g_{12}^2 }   }\log \frac{\sin x_2}{ \sin (x_1+x_2)}.
\]
Notice the $x$-domain is contained in the closure of the coamoeba $\Delta_2\cup -\Delta_2\subset T^2$,
\[
 \Delta_2= \{ 0\leq x_1 \leq \pi, 0\leq x_2\leq \pi, 0\leq x_1+x_2\leq \pi  \}.
\]

We can recover the potential $u$  from $du=y_1dx_1+y_2dx_2$. 
 We introduce the special function
\begin{equation}\label{specialfunctionL}
   L(x) = \int_0^x \log \sin t dt,\quad 0\leq x\leq \pi.
\end{equation}
Then the  solution $u$ over $\Delta_2$ is
\begin{equation}\label{2Dsolution}
u_2= \frac{1}{  \sqrt{ g_{11} g_{22}-g_{12}^2 }   } ( L(x_1)+ L(x_2)+ L(\pi-x_1-x_2)-L(\pi)),
\end{equation}
and the solution over $-\Delta_2$ is given by $u_2(-x)= -u_2(x)$. The three edges of the triangle $\Delta_2$ appear on symmetric footing as expected. The notation $u_2$ is meant to indicate the 2D problem.

\begin{rmk}
The pair of pants surfaces come in families $\{ az_1+bz_2=1\} $ for $a, b\in \C^*$. The effect of $a, b$ is to translate the surface inside $T^*T^2$. In particular, if $a, b\in U(1)$, the $x$-domain is translated inside $T^2$, and if $a, b>0$, then the $y_i$ are translated in $\R^2$, corresponding to the more general solutions over $\Delta_2$,
\[
u=u_0+ a_0+ a_1 x_1+ a_2 x_2,
\]
for real constants $a_0, a_1, a_2$. 
\end{rmk}

\begin{rmk}
If we start with other choices of phase angle $\hat{\theta}\in [0, \pi)$, the solution will no longer be graphical over the $x_i$ variables. Instead $x_i$ and $y_i$ will be mixed up, and we do not see an obvious path for higher dimensional generalisations.
\end{rmk}

We now seek a better analytic understanding of the special solution (\ref{2Dsolution}), which will guide our higher dimensional generalisation. An easy calculus exercise gives

\begin{lem}\label{specialfunctionLproperty}
\[
L'(x)= \log \sin x, \quad L''(x)=  \cot x, \quad L(x)+L(\pi-x)= L(\pi). 
\]
In particular $L$ is a negative, decreasing function, is convex for $0\leq x\leq \frac{\pi}{2}$, and $L(x)= x\log x+O(x)$ for $x\to 0$.
\end{lem}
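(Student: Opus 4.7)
The lemma is a collection of elementary calculus facts about $L(x)=\int_0^x \log\sin t\, dt$, so my plan is to dispatch each assertion in turn, in the order they are stated.

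First, the derivative identities $L'(x)=\log\sin x$ and $L''(x)=\cot x$ follow immediately from the fundamental theorem of calculus and the derivative of $\log\sin x$. I would record these at the outset since they drive everything else. From $L'(x)=\log\sin x<0$ on $(0,\pi)$ I get strict monotonicity; combined with $L(0)=0$ this shows $L<0$ on $(0,\pi]$. Convexity on $[0,\pi/2]$ follows from $L''(x)=\cot x\geq 0$ there.

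The reflection identity $L(x)+L(\pi-x)=L(\pi)$ is the one step requiring a small trick: substitute $s=\pi-t$ in
\[
L(\pi-x)=\int_0^{\pi-x}\log\sin t\, dt,
\]
which, using $\sin(\pi-s)=\sin s$ and flipping the limits, becomes $\int_x^{\pi}\log\sin s\, ds = L(\pi)-L(x)$. Rearranging yields the identity.

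Finally, for the asymptotic $L(x)=x\log x+O(x)$ as $x\to 0^+$, I would write
\[
\log\sin t=\log t+\log\tfrac{\sin t}{t}=\log t+O(t^2)
\]
using the Taylor expansion $\sin t/t=1-t^2/6+O(t^4)$, so $\log(\sin t/t)=O(t^2)$ uniformly for small $t$. Integrating from $0$ to $x$ gives
\[
L(x)=\int_0^x\log t\, dt+\int_0^x O(t^2)\, dt=(x\log x-x)+O(x^3)=x\log x+O(x).
\]
No step is really an obstacle here; the only mildly non-routine point is the reflection identity, and even that is a one-line substitution. The main purpose of the lemma is to collect properties for later use, so I would keep the write-up brief and just verify each clause.
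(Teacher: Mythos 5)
Your proof is correct and matches the elementary calculus approach the paper implicitly expects (the paper simply states "an easy calculus exercise gives" and supplies no details). Every clause is verified cleanly: FTC for the first two identities, the substitution $s=\pi-t$ with $\sin(\pi-s)=\sin s$ for the reflection formula, sign of $L'$ for monotonicity and negativity, sign of $L''=\cot x$ on $[0,\pi/2]$ for convexity, and the expansion $\log\sin t=\log t+O(t^2)$ integrated term by term for the asymptotic.
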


\begin{lem}\label{2DHessian}
The function $\sqrt{ g_{11} g_{22}-g_{12}^2 } u_2$ can be rewritten as $L(x_1)+L(x_2)-L(x_1+x_2)$, and its Hessian matrix is explicitly calculated as
\[
\begin{pmatrix}
\cot x_1-\cot (x_1+x_2) & -\cot (x_1+x_2) \\
-\cot (x_1+x_2) & \cot x_2 -\cot (x_1+x_2)
\end{pmatrix}.
\]
This matrix is positive definite, and has determinant one. (Hint: use the trigonometric formula $\tan(x+y)= \frac{\tan(x)+\tan(y)}{1-\tan x\tan y}$.)
\end{lem}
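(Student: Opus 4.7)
The plan is to start by massaging the expression for $\sqrt{g_{11}g_{22}-g_{12}^2}\,u_2$ using the symmetry identity $L(x)+L(\pi-x)=L(\pi)$ from Lemma \ref{specialfunctionLproperty}. Applied to the third summand, this gives $L(\pi-x_1-x_2) = L(\pi)-L(x_1+x_2)$, and the additive constants $L(\pi)$ cancel; this yields the cleaner form $L(x_1)+L(x_2)-L(x_1+x_2)$, in which one already sees that the three edges of $\Delta_2$ play symmetric roles (with $x_3:=\pi-x_1-x_2$ playing the role of the third coordinate).

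Next I would differentiate twice, using the formula $L''(t)=\cot t$ from Lemma \ref{specialfunctionLproperty}. The pure second derivatives pick up $\cot x_i$ from the $L(x_i)$ term and $-\cot(x_1+x_2)$ from the chain rule on $L(x_1+x_2)$, while the mixed partial only sees the last term and contributes $-\cot(x_1+x_2)$. This immediately gives the stated matrix without further calculation.

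For positive definiteness it suffices to check positivity of a diagonal entry and of the determinant. On the interior of $\Delta_2$ we have $0<x_1<x_1+x_2<\pi$, and since $\cot$ is strictly decreasing on $(0,\pi)$, the diagonal entry $\cot x_1-\cot(x_1+x_2)$ is strictly positive; the same works for the other diagonal entry. For the determinant, expanding gives
\[
\cot x_1\cot x_2 - \cot(x_1+x_2)\bigl(\cot x_1+\cot x_2\bigr) + \cot^2(x_1+x_2)-\cot^2(x_1+x_2),
\]
so the determinant equals $\cot x_1\cot x_2 - \cot(x_1+x_2)(\cot x_1+\cot x_2)$. Rewriting $\cot(x_1+x_2)=\frac{\cot x_1\cot x_2-1}{\cot x_1+\cot x_2}$ from the tangent addition formula in the hint, the second term simplifies to $\cot x_1\cot x_2-1$, leaving the determinant equal to $1$.

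There is no real obstacle here; the only mildly non-routine point is recognising that one should apply the cotangent form of the addition formula to collapse the determinant, which the hint already supplies. The identity $\det=1$ is of course the Monge–Amp\`ere manifestation of the special Lagrangian equation at phase $\hat\theta=\pi/2$ in dimension two, and serves as a sanity check of the whole hyperk\"ahler rotation calculation that produced $u_2$.
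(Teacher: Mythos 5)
Your proof is correct and is exactly the computation the paper intends: the paper leaves this lemma as an exercise with the hint pointing at the tangent (equivalently cotangent) addition formula, and your use of $L''=\cot$, Sylvester's criterion via the positive diagonal entry, and the collapse of the determinant to $1$ is the standard route. No issues.
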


\begin{rmk}
Using the formula $\cot x= x^{-1}- \frac{x}{3}+O(x^3)$, we see that for $(x_1,x_2)$ close to the origin, the Hessian matrix is to leading order
\[
\frac{1}{x_1+x_2} ( \frac{x_2}{x_1} dx_1^2 -2 dx_1dx_2+ \frac{x_1}{x_2} dx_2^2) +O(x_1+x_2). %+ \frac{1}{3}(x_2dx_1^2+ 2(x_1+x_2)dx_1dx_2+ x_1dx_2^2)+ O((x_1+x_2)^3).
\]
It has a large eigenvalue comparable to $\min(x_1, x_2)^{-1}$, and a small eigenvalue comparable to $\min(x_1, x_2)$, with product one.

%and the small eigendirection is specified by $a_1\partial_{x_1}+a_2\partial_{x_2}$,
%\[
%(a_1, a_2) = 
%\]

\end{rmk}

\begin{cor}
The solution $u_2$ has boundary data zero on $\partial \Delta_2$, is convex on $\Delta_2$, and is a smooth negative valued function in the interior. Near the edge $x_2=0$ but away from the vertices of $\Delta_2$, the boundary behaviour is
\[
\begin{cases}
u_2=  \frac{1}{  \sqrt{ g_{11} g_{22}-g_{12}^2 }   } x_1\log x_1 +O(x_1), \\
\frac{\partial u_2}{\partial x_1}= \frac{1}{  \sqrt{ g_{11} g_{22}-g_{12}^2 }   } \log x_1 +O(1),   
\\
\frac{\partial^2 u_2}{\partial^2 x_1}= \frac{1}{  \sqrt{ g_{11} g_{22}-g_{12}^2 }   }  \frac{1}{x_1} +O(1),
\end{cases}
\]
and similarly for the other two edges. Near the vertices, the gradient of $u_2$  converge to finite values along the rays $\frac{x_1}{x_2}=const>0$, but the largest eigenvalue of the Hessian of $u_2$ tends to $+\infty$. 
\end{cor}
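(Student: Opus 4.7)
The plan is to deduce each claim from the explicit formula (\ref{2Dsolution}) combined with Lemmas \ref{specialfunctionLproperty} and \ref{2DHessian}. For the vanishing on $\partial\Delta_2$, each edge of the triangle is cut out by setting one of $x_1$, $x_2$, $\pi-x_1-x_2$ equal to zero, in which case the remaining two arguments sum to $\pi$; the identity $L(0)=0$ together with the reflection $L(x)+L(\pi-x)=L(\pi)$ then makes the three $L$-terms cancel the constant $-L(\pi)$. Interior positive definiteness of $D^2 u_2$ is exactly Lemma \ref{2DHessian}, and since $L$ is real analytic on $(0,\pi)$, $u_2$ is smooth on the interior; strict convexity together with vanishing boundary values then forces $u_2<0$ inside by the maximum principle (or directly, since $L$ is itself negative on $(0,\pi)$).

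For the edge asymptotics away from the vertices, after relabelling one reduces to the case where a single argument of $L$ tends to zero while the remaining two stay in a compact subinterval of $(0,\pi)$. On these latter arguments $L$, $L'$, $L''$ are real analytic, so the boundary behaviour is governed entirely by the expansion $L(s)=s\log s+O(s)$ and its derivatives $L'(s)=\log\sin s=\log s+O(s)$, $L''(s)=\cot s=s^{-1}+O(s)$ from Lemma \ref{specialfunctionLproperty}.

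For the behaviour near a vertex, say the origin, I would parametrise $(x_1,x_2)=(rs_1,rs_2)$ with $s_i>0$ and $s_1+s_2=1$. The third term $L(\pi-r)$ is analytic at $r=0$, and the first two give $L(rs_i)=rs_i\log(rs_i)+O(r)$, which yields the scale of $u_2$. The partial derivative $\partial_i u_2=\frac{1}{\sqrt{g_{11}g_{22}-g_{12}^2}}\bigl(\log\sin x_i-\log\sin(x_1+x_2)\bigr)$ is the difference of two logarithms that individually diverge to $-\infty$, but along the radial ray the ratio $\sin x_i/\sin(x_1+x_2)$ tends to $s_i\in(0,1)$, so the gradient limit equals the finite value $\frac{1}{\sqrt{g_{11}g_{22}-g_{12}^2}}\log s_i$. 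For the Hessian, the diagonal entries in Lemma \ref{2DHessian} contain $\cot x_i\sim x_i^{-1}\to+\infty$, so the Rayleigh quotient on $e_i$ blows up; combined with the determinant being identically one, the largest eigenvalue diverges while the smallest collapses to zero.

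The argument is a cascade of substitutions into closed-form expressions, so there is no substantial analytic obstacle. The only mildly delicate point is the cancellation of divergent logarithms in the gradient expansion at a vertex, which is exactly the phenomenon that makes the special Lagrangian graph extend continuously, though not smoothly, across the vertex once one passes to the real blow up.
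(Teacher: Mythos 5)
Your proposal is correct and follows the same route the paper intends: the corollary is a direct read-off from the explicit formula (\ref{2Dsolution}) together with Lemmas \ref{specialfunctionLproperty} and \ref{2DHessian} (vanishing on edges via $L(0)=0$ and the reflection identity, convexity from the positive-definite Hessian, edge and vertex asymptotics from $L(s)=s\log s+O(s)$ and its derivatives, and the cancellation of the two divergent logarithms in $\log\frac{\sin x_i}{\sin(x_1+x_2)}$ along radial rays). The only small caveat is your parenthetical claim that negativity follows ``directly, since $L$ is itself negative'': in the form $L(x_1)+L(x_2)-L(x_1+x_2)$ the last term enters with a positive sign, so that shortcut does not quite work as stated, but your primary argument (an interior maximum of a convex function with zero boundary data would force $u_2\equiv 0$, contradicting the positive-definite Hessian) is valid and suffices.
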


\begin{rmk}
The zero boundary data is related to the observation that $u_1=0$ solves the 1D version of the special Lagrangian graph equation. The fact that the Hessian becomes degenerate along the edge can be expected as follows: the zero boundary data on the edge suggests that the smallest eigenvalue of the Hessian should tend to zero on the boundary, and the sum of arctan of the eigenvalues is $\frac{\pi}{2}$, so the arctan of the largest eigenvalue should tend to $\frac{\pi}{2}$.
\end{rmk}

\subsection{Inductive boundary value prescription}

We now propose an inductive strategy to produce special Lagrangian graphs in general dimensions. The main guiding principle is that \emph{the $n$-dimensional solution should supply the boundary value for the $(n+1)$-dimensional Dirichlet problem}. The domain in the dimension $n$ case is 
\begin{equation*}
\Delta_n= \{ 0\leq x_i \leq \pi, i=1,2,\ldots , n, \text{and } 0\leq x_1+x_2+\ldots +x_n\leq \pi  \}.
\end{equation*}

For concreteness, we will first describe the Dirichlet problem in 3D. The domain is the 3-dimensional simplex $\Delta_3$.
The boundary $\partial \Delta_3$ consists of 4 triangles. On all the 6 edges of the 4 triangles, we put the boundary data $u_1=0$. On each of the 4 triangle faces, we use the solution to the 2D version of the problem to prescribe the boundary data for $u$:
\[
\begin{cases}
u(x_1, x_2,0)= \frac{1}{  \sqrt{ g_{11} g_{22}-g_{12}^2 }   } ( L(x_1)+ L(x_2)+ L(\pi-x_1-x_2)-L(\pi)),
\\
u(x_1, 0, x_3)= \frac{1}{  \sqrt{ g_{11} g_{33}-g_{13}^2 }   } ( L(x_1)+ L(x_3)+ L(\pi-x_1-x_3)-L(\pi)),
\\
u(0, x_2,x_3)= \frac{1}{  \sqrt{ g_{22} g_{33}-g_{23}^2 }   } ( L(x_2)+ L(x_3)+ L(\pi-x_2-x_3)-L(\pi)),
\end{cases}
\]
and similarly for the face $\{ x_1+x_2+x_3=\pi\} $. By construction, the boundary data match continuously on the intersection of faces. The Dirichlet problem in 3D is to find $u=u_3$ with the above boundary data on $\partial \Delta_3$, solving the special Lagrangian graph equation (\ref{specialLaggraph1})(\ref{specialLaggraph2}) in the phase branch $\hat{\theta}= \frac{(3-1)}{2}\pi$, such that $u_3$ is smooth in the interior of $\Delta_3$, and continuous up to the boundary.

More generally, assuming the $(n-1)$-dim problem has a solution, then the $n$-dim problem has well defined boundary data, which is $C^0$ on $\partial \Delta_n$  by construction, and we seek a solution $u_n$ to (\ref{specialLaggraph1})(\ref{specialLaggraph2}) with $\hat{\theta}= \frac{(n-1)}{2}\pi$, such that $u_n$ is smooth in the interior of $\Delta_n$, and continuous up to the boundary.

\begin{rmk}
We emphasize that the phase branch $\hat{\theta}= \frac{(n-1)}{2}\pi$ automatically implies the convexity of the solution. This easily implies by induction that the Dirichlet solution is nonpositive.
\end{rmk}

%\begin{rmk}
%While the existence of the solution requires some extra barrier arguments (because of the non-smoothness of the domain and the failure of strict boundary convexity), the uniqueness to the Dirichlet problem is known to hold without boundary regularity assumptions \cite[Thm 6.3]{HarveyLawson}.
%\end{rmk}

\subsection{Existence theorem and  barrier functions}

We shall prove 

\begin{thm}\label{Existence}
(Existence) Let $n\geq 1$. There is a unique solution $u_n$ to the Dirichlet problem in dimension $n$, which is smooth in the interior of $\Delta_n$, and continuous up to the boundary of $\Delta_n$. More precisely, we have the following boundary modulus of continuity estimate: 
\begin{equation}\label{boundarymodulus}
   |u_n(x_1,\ldots x_n) - u_{n-1}(x_1, \ldots x_{n-1})| \leq  - C(n) x_n \log \frac{x_n}{2\pi} ,\quad \forall x\in \Delta_n, 
\end{equation}
for some constant $C(n)$ depending only on the dimension $n$ and $g_{ij}$.
Similarly with the other boundary components. 
\end{thm}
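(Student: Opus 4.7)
\emph{Proof plan.} The plan is to argue by induction on $n$. The base case $n=1$ is immediate since $u_1\equiv 0$ solves $\arctan D^2 u_1=0$ on $[0,\pi]$ with vanishing boundary values. Inductively assuming $u_{n-1}$, the boundary data $g:\partial\Delta_n\to\R$ prescribed face-by-face is continuous: on any codimension-two edge $\{x_i=0\}\cap\{x_j=0\}$, the two adjacent face-restrictions of $g$ both equal $u_{n-2}$ in the remaining variables, by the inductive modulus estimate (\ref{boundarymodulus}) applied to $u_{n-1}$.

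For existence I would regularise both the domain and the data: exhaust $\text{Int}(\Delta_n)$ by smooth strictly convex domains $\Omega_\epsilon$ and mollify $g$ to a smooth $g_\epsilon$. Caffarelli--Nirenberg--Spruck \cite{Caff} then produces a smooth convex solution $u_\epsilon$ on $\bar\Omega_\epsilon$, and the Wang--Yuan interior Hessian estimate \cite{Yuan2}---applicable in our supercritical phase $\hat\theta_n=\frac{n-1}{2}\pi\in[\frac{n-2}{2}\pi,\frac{n}{2}\pi)$---yields, given a uniform $L^\infty$ bound, $C^{k,\alpha}$-bounds on every compact $K\subset\text{Int}(\Delta_n)$ that are uniform in $\epsilon$. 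A diagonal subsequence then converges locally smoothly to an interior solution $u_n$. Uniqueness follows from the standard comparison principle for the monotone elliptic operator $\arctan D^2 u$ in the convex branch.

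The heart of the argument is to construct matching upper and lower barriers that realise the estimate (\ref{boundarymodulus}) uniformly in $\epsilon$, thereby forcing $C^0$-convergence up to $\partial\Delta_n$ and attainment of the prescribed boundary values. For the \emph{upper barrier} near the face $\{x_n=0\}$, I would take
\[
\bar\phi(x) = u_{n-1}(x_1,\ldots,x_{n-1}) + \psi(x_n),
\]
with $\psi$ smooth, $\psi(0)=0$ and $\psi(x_n)\geq -C(n-1)\,x_n\log(x_n/2\pi)$. Since $u_{n-1}$ viewed in $n$ variables has a zero Hessian eigenvalue in the $x_n$-direction and the Hessian of $\bar\phi$ is block-diagonal in the adapted metric, one computes
\[
F(D^2\bar\phi) = \tfrac{n-2}{2}\pi + \arctan\psi''(x_n) \leq \hat\theta_n,
\]
so $\bar\phi$ is a supersolution; it matches $g$ on $\{x_n=0\}$ and, by the inductive modulus estimate applied to $u_{n-1}$, dominates $g$ on each neighbouring face once $C$ is taken large enough. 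Comparison then gives the upper half of (\ref{boundarymodulus}) for every $u_\epsilon$.

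The \emph{lower barrier} is where I expect the principal obstacle to lie. Any additive ansatz $u_{n-1}(x_1,\ldots,x_{n-1})+\psi(x_n)$ with finite $\psi''$ automatically produces a \emph{super}solution, never a subsolution, because $\arctan\psi''$ saturates at $\pi/2$ and hence cannot push $F$ strictly above $\frac{n-2}{2}\pi+\pi/2=\hat\theta_n$. A genuine subsolution matching $u_{n-1}$ on $\{x_n=0\}$ must therefore recruit Hessian curvature in several directions simultaneously; my plan would be to look for one of the form
\[
\underline{\phi}(x) = v(x_1,\ldots,x_{n-1}) + c\,x_n\log(x_n/2\pi) + A\,|x|^{2},
\]
where $v$ is a strict subsolution of the $(n-1)$-dimensional equation sitting just below $u_{n-1}$, $A$ is large enough that the uniform Hessian term lifts $F$ above $\hat\theta_n$, and $c$ is chosen so that $\underline{\phi}\leq g$ on all $n+1$ faces. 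The delicate compatibility of the $n+1$ face-barriers near their common low-dimensional intersections---where both the inductive modulus estimate and the subsolution condition must be respected simultaneously---is the main technical hurdle, and may ultimately require a piecewise construction glued from explicit models inspired by the two-dimensional formula (\ref{2Dsolution}).
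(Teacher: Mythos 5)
Your skeleton (induction on $n$, exhaustion by strictly convex domains, Wang--Yuan interior estimates, comparison with barriers, uniqueness by the comparison principle) matches the paper's, and your diagnosis that an additive ansatz $u_{n-1}+\psi(x_n)$ can only ever be a supersolution is exactly right. The upper barrier you propose is a workable variant of the paper's (which instead cones $u_{n-1}$ off the vertex $(0,\ldots,0,\pi)$, so that linearity along rays kills an eigenvalue). But the lower barrier you propose does not work, and this is precisely the step the whole theorem hinges on. The term $A|x|^2$ with $A$ large is incompatible with the boundary matching that the estimate (\ref{boundarymodulus}) requires: on the face $\{x_n=0\}$ your $\underline{\phi}$ equals $v(x')+A|x'|^2$, so either it exceeds the boundary data $u_{n-1}$ there (and is not a subbarrier at all), or you must push $v$ down below $u_{n-1}$ by at least $A|x'|^2$, in which case the resulting lower bound on $u_n$ has an error that does \emph{not} vanish as $x_n\to 0$ and you never recover $|u_n-u_{n-1}|\lesssim -x_n\log x_n$. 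Moreover, even ignoring this, the gain $\arctan(\lambda_i'+2A)-\arctan(\lambda_i')$ degenerates wherever the eigenvalues $\lambda_i'$ of $D^2u_{n-1}$ blow up, i.e.\ near $\partial\Delta_{n-1}$, so the subsolution inequality fails exactly at the lower-dimensional strata you flag as the ``main technical hurdle.''

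The missing idea is that the tangential Hessian gain must itself vanish as $x_n\to 0$ (to preserve the boundary value) while still dominating the loss $\tfrac{\pi}{2}-\arctan(K/x_n)=O(x_n/K)$ in the normal direction; a gain of size $O(x_n)$ in each tangential direction suffices if its constant beats $1/K$. The paper manufactures exactly this by adding the two-dimensional cross-terms $L(x_i)+L(x_n)-L(x_i+x_n)$, which vanish identically on $\{x_n=0\}$ yet have Hessian bounded below by $C^{-1}x_n\,dx_i^2-\tfrac{C}{x_n}dx_n^2$; the large normal term $Kx_n\log\tfrac{x_n}{2\pi}$ then absorbs the negative part. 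Two further ingredients you omit are also needed: the tangential slot must be filled with $u_{n-1}(\tilde{x})$ in tilted coordinates $\tilde{x}_i=x_i+a_ix_n$ chosen $g$-orthogonal to $dx_n$, so that the eigenvalue bookkeeping for $\arctan$ is legitimate for general $g$; and since these tilted coordinates may exit $\Delta_{n-1}$ when the simplex is not $g$-acute, the paper runs an inductive hierarchy of barriers anchored at the vertex (the globally defined $K_0\sum x_i\log\tfrac{x_i}{2\pi}$) and then at each face $\Delta_m$ with $K_0\ll K_1\ll\cdots$, each barrier supplying the a priori bound needed on the boundary of the next one's domain of definition. Without these three elements your plan cannot close.
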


\begin{rmk}
Notice that $0\leq x_n\leq \pi$, so the log term on the right is negative. For $x_n$ bounded from below, the estimate (\ref{boundarymodulus}) amounts to a lower bound on $u_n$. The main force of (\ref{boundarymodulus}) is when $x_n$ is small. By induction (\ref{boundarymodulus}) implies
\begin{equation}\label{boundedmodulus2}
|u_n(x_1,\ldots x_n)-u_m(x_1,\ldots x_m)|\leq -C(n)\sum_{m+1}^n x_i\log \frac{x_i}{2\pi},\quad 1\leq m\leq n-1.
\end{equation}
We may call this $O(x\log x)$ boundary continuity near the face $\Delta_m\subset \partial \Delta_n$. In the statement we focus on the $\{x_n=0\} $ boundary, but of course the same kind of boundary modulus of continuity applies to any other boundary faces.
\end{rmk}

The case of $n=1,2$ have been checked explicitly, so we will focus on $n\geq 3$, and assume by induction that the cases $\leq n-1$ are known. The key is to build barrier functions.

We begin with the upper barrier construction. 

\begin{lem}
The function \[
\bar{u}_n =(1- \frac{x_n}{\pi}  ) u_{n-1} (  \frac{x_1}{ 1- \frac{x_n}{\pi}   }, \ldots \frac{x_{n-1} }{ 1- \frac{x_n}{\pi}   } ),
\] 
is a supersolution to the $n$-dim Dirichlet problem. 
\end{lem}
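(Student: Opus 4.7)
The function $\bar u_n(x) = t\, u_{n-1}(x'/t)$ with $t = 1 - x_n/\pi$ and $x' = (x_1, \ldots, x_{n-1})$ is precisely the \emph{cone extension} of the $(n-1)$-dim graph of $u_{n-1}$ from the apex $(0, \ldots, 0, \pi) \in \partial \Delta_n$, where the function value is zero. Equivalently, $\bar u_n$ is affine along every line from this apex to the base face $\{x_n = 0\}$, which forces one eigenvalue of $D^2 \bar u_n$ to vanish identically. My plan is to exploit this cone structure to obtain a zero eigenvalue for free, then use strict convexity of $u_{n-1}$ (available by induction) to control the remaining $n-1$ eigenvalues.

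At any interior point of $\Delta_n$, the rescaled variable $y := x'/t$ lies in the interior of $\Delta_{n-1}$, so $u_{n-1}$ is smooth at $y$ by the inductive hypothesis. A direct chain rule computation gives
\begin{equation*}
D^2 \bar u_n \;=\; \tfrac{1}{t}\, M\, (D^2 u_{n-1})|_y\, M^T, \qquad M \;=\; \begin{pmatrix} I_{n-1} \\ v^T/\pi \end{pmatrix}, \quad v = (y_1, \ldots, y_{n-1})^T,
\end{equation*}
with $M$ an $n \times (n-1)$ matrix of full column rank. The apex direction $w = (-v^T/\pi, 1)^T$ lies in $\ker M^T$, so $D^2 \bar u_n\, w = 0$; hence one of the $n$ generalized eigenvalues of $D^2 \bar u_n$ with respect to the ambient metric $g$ is exactly zero.

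The remaining $n-1$ generalized eigenvalues are strictly positive. They coincide, up to the positive factor $1/t$, with the eigenvalues of $(D^2 u_{n-1}) \cdot h$ where $h$ is a positive-definite $(n-1) \times (n-1)$ matrix built from $M$ and $g$ (it is positive definite because $M$ has full column rank and $g$ is positive definite), and these are positive provided $D^2 u_{n-1}$ is strictly positive definite in the interior. The latter follows inductively from the phase relation $\sum_{i=1}^{n-1} \arctan \mu_i = \frac{n-2}{2}\pi$ combined with convexity $\mu_i \geq 0$: a vanishing eigenvalue would force the remaining $n-2$ arctangents to sum to $\frac{n-2}{2}\pi$, which is impossible because each is strictly less than $\pi/2$.

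Combining, each of the $n-1$ positive generalized eigenvalues contributes an arctangent strictly less than $\pi/2$, so
\begin{equation*}
\sum_{i=1}^n \arctan \lambda_i \;=\; \arctan 0 \,+\, \sum_{i=2}^n \arctan \lambda_i \;<\; (n-1)\tfrac{\pi}{2} \;=\; \hat{\theta},
\end{equation*}
which is the strict classical supersolution inequality in the interior of $\Delta_n$ for the phase $\hat{\theta} = \frac{n-1}{2}\pi$. Conceptually, a cone over a convex solution of the $(n-1)$-dim equation automatically gives up one full $\pi/2$ of phase through its degenerate apex direction, landing safely below the $n$-dim critical phase. I do not foresee any serious obstacle: the argument is entirely algebraic once the chain rule identity and the inductive strict positive-definiteness of $D^2 u_{n-1}$ are in hand.
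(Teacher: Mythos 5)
Your treatment of the interior differential inequality is correct and is essentially the paper's argument made explicit: the paper simply observes that $\bar u_n$ is affine along the rays from the apex $(0,\ldots,0,\pi)$ to the base face, so its smallest Hessian eigenvalue is non-positive and $\arctan D^2\bar u_n\le \frac{n-1}{2}\pi$ follows because each of the remaining arctangents is below $\pi/2$. Your chain-rule identity $D^2\bar u_n=\frac1t M(D^2u_{n-1})M^T$ with $w=(-v^T/\pi,1)^T\in\ker M^T$ is a clean way to exhibit the zero eigenvalue. (Two small remarks: the inductive \emph{strict} positive-definiteness of $D^2u_{n-1}$ is not actually needed, since $\arctan\lambda<\pi/2$ for any finite $\lambda\ge 0$; and one should note that at an interior point of $\Delta_n$ the rescaled point $x'/t$ may lie on $\partial\Delta_{n-1}$ when some $x_i=0$, so the identity should be interpreted in the barrier/viscosity sense there, or one should restrict to where $u_{n-1}$ is smooth.)

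The genuine gap is that you never verify the \emph{boundary} half of the supersolution property, namely $\bar u_n\ge u_n|_{\partial\Delta_n}$ on $\partial\Delta_n$. Being a supersolution to the Dirichlet problem means both the differential inequality in the interior \emph{and} domination of the prescribed boundary data; the lemma is used in the existence proof precisely through the comparison $u_{n,\epsilon}\le \bar u_n-C\epsilon\log\epsilon$ on $\partial\Omega_\epsilon$, which is worthless without the boundary inequality. This step is not automatic: the boundary data on the side faces of $\Delta_n$ is given by the lower-dimensional solutions $u_m$, and the needed inequality follows because $\bar u_n$ is affine on each segment joining the apex to a point of $\Delta_{n-1}\subset\partial\Delta_n$, agrees with the boundary data at both endpoints (it vanishes at the apex, as do all lower-dimensional solutions at vertices, and equals $u_{n-1}$ on the base), while the boundary data restricted to such a segment is \emph{convex} by the inductive choice of phase $\hat\theta=\frac{m-1}{2}\pi$. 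An affine function dominating a convex function at the endpoints dominates it on the whole segment. You need to add this argument to complete the proof.
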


\begin{proof}
For our choice of phase, the solutions to the $m$-dimensional problems are automatically convex for $m=1,2,\ldots n-1$. These lower dimensional solutions prescribe the boundary data of the $n$-dim problem. By construction $\bar{u}_n$ is linear on the line segments joining $(0,\ldots 0,\pi)$ and the points of $\Delta_{n-1}\subset \partial \Delta_n$, and agree with the boundary data $u_n|_{\partial \Delta_n}$ at the endpoints of these line segments. Hence $\bar{u}_n \geq u_n|_{\partial \Delta_n}$ on $\partial \Delta_n$.

%Recall the elementary fact that the graph of a real variable convex function lies below the chord.  inside $\Delta_n\times \R_u$ is simply the cone over the graph of $u_{n-1}$ with apex at $(x_1,\ldots x_{n-1}, x_n,u)=(0, \ldots 0, \pi,0)\in \Delta_n\times \R_u$, must lie above the boundary data for the Dirichlet problem of $u_n$. 

The induction hypothesis implies $\bar{u}_n$ is smooth in the interior of $\Delta_n$ and continuous up to the boundary. Since $\bar{u}_n$ is linear when restricted to the line segments connecting $(0,\ldots 0, \pi)$ to $(x_1,\ldots x_{n-1}, 0)$, its smallest Hessian eigenvalue cannot be positive. Thus
\[
\arctan D^2 \bar{u}_n \leq  \frac{n-1}{2} \pi 
\]
on the interior of $\Delta_n$, namely $\bar{u}_n$ is a \emph{supersolution} to the Dirichlet problem. 
\end{proof}

Our next goal is to build lower barrier functions.
We shall first describe the simpler case, when the dihedral angle between any two codimension one faces of $\Delta_n$ is acute.

The barrier function in this `acute simplex' case is
\[
\underbar{u}_n= u_{n-1}(\tilde{x}_1,\ldots \tilde{x}_{n-1}) +\sum_{i=1}^{n-1} ( L(x_n)+ L(x_i)-L(x_i+x_n) ) 
+  K x_n \log \frac{x_n}{2\pi}
\]
Here $\tilde{x}_i$ are linear functions of the form $x_i+ a_i x_n$ for some $a_i\in \R$, such that $g(d\tilde{x}_i, dx_n)=0$. The advantage of the `acute simplex' assumption is that $(\tilde{x}_i)\in \Delta_{n-1}$, so that $u_{n-1}(\tilde{x}_i) $ makes sense. The parameter $K\gg 1 $ is a large positive constants depending on $n$ to be determined. Notice that $\underbar{u}_n $
agrees with $u_{n-1}$ on the $x_n=0$ boundary face.

\begin{lem}
The function $\underbar{u}_n$ is a subsolution to the $n$-dim Dirichlet problem.
\end{lem}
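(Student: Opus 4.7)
The proof requires verifying both the boundary comparison $\underbar{u}_n \le u_n|_{\partial\Delta_n}$ on $\partial\Delta_n$ and the PDE inequality $\arctan D^2\underbar{u}_n \ge \tfrac{n-1}{2}\pi$ in $\mathrm{Int}(\Delta_n)$. For the boundary, on $\{x_n=0\}$ each summand $L(x_n)+L(x_i)-L(x_i+x_n)$ vanishes (using $L(0)=0$), the term $Kx_n\log(x_n/2\pi)$ vanishes, and $\tilde{x}_j|_{x_n=0}=x_j$, so $\underbar{u}_n$ reduces precisely to $u_{n-1}(x_1,\ldots,x_{n-1})$, matching the prescribed Dirichlet data. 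On every other face, each 2D summand $L(x_i)+L(x_n)-L(x_i+x_n)$ is convex (Lemma \ref{2DHessian}) and vanishes on all three sides of $\{x_i,x_n\ge 0,\;x_i+x_n\le\pi\}$ (the inclined side by the reflection identity $L(x)+L(\pi-x)=L(\pi)$ in Lemma \ref{specialfunctionLproperty}), hence is non-positive throughout; the term $Kx_n\log(x_n/2\pi)$ is non-positive on $[0,\pi]$; and $u_{n-1}(\tilde{x})\le 0$ by the induction hypothesis together with the convexity remark. The acute dihedral angle assumption controls the size of the shift coefficients $a_j$, so that for $K$ sufficiently large these negative contributions absorb any discrepancy between $u_{n-1}(\tilde{x})$ and the boundary datum of $u_n$ on the given face.

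For the PDE inequality the key is the additive decomposition
\[
D^2\underbar{u}_n \;=\; D^2\bigl(u_{n-1}(\tilde{x})\bigr) \;+\; H_L \;+\; \tfrac{K}{x_n}\,dx_n\otimes dx_n,
\]
analysed in the $g$-orthogonal frame $(d\tilde{x}_1,\ldots,d\tilde{x}_{n-1},dx_n)$. The $g$-orthogonality of $d\tilde{x}_i$ to $dx_n$ forces the first summand to be block diagonal: its upper $(n-1)\times(n-1)$ block equals the Hessian of $u_{n-1}$ computed in the $\tilde{x}$ coordinates with the induced metric, and the $x_n$-direction eigenvalue is zero. By the inductive hypothesis the sum of arctans of its eigenvalues is therefore exactly $\tfrac{n-2}{2}\pi$. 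The middle summand $H_L$ is positive semidefinite, being the orthogonal direct sum of the positive-definite $2\times 2$ Hessians from Lemma \ref{2DHessian}, and the final rank-one term is manifestly PSD. By Weyl's monotonicity of ordered eigenvalues under PSD perturbation together with the monotonicity of $\arctan$, the arctan-sum is monotone, so it suffices to show that $H_L+\tfrac{K}{x_n}\,dx_n^{\otimes 2}$ adds at least $\tfrac{\pi}{2}$ to the arctan sum. A Schur-complement computation splits this into two regimes: for $x_n$ small relative to $K$ the rank-one term forces one eigenvalue to order $K/x_n$ whose $\arctan$ approaches $\tfrac{\pi}{2}$; for $x_n$ of order one, each 2D $(x_i,x_n)$-block of $H_L$ on its own already realises $\arctan a+\arctan b=\tfrac{\pi}{2}$ via the determinant-one identity in Lemma \ref{2DHessian}. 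Taking $K=K(n,g)$ large enough unifies the two regimes.

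The principal obstacle is precisely this $\arctan$ bookkeeping. Since $\arctan$ is not additive, one cannot simply add contributions from the three Hessian summands; what makes the argument go through is that the $g$-orthogonal choice of $\tilde{x}_i$ isolates exactly one null direction in the first summand, so that the Weyl-type monotonicity aligns the ``missing'' $\tfrac{\pi}{2}$ with the single direction into which the $K/x_n$ and $H_L$ corrections feed. The boundary comparison on the non-initial faces is the other subtle step, and it is exactly the acute-simplex hypothesis that bounds $a_j$ and thereby guarantees $u_{n-1}(\tilde{x})$ remains under the boundary datum $u_{n-1}$ restricted to the face, once the barrier constant $K$ is taken sufficiently large.
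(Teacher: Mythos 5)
Your verification of the PDE inequality has a genuine gap at exactly the point where the barrier is delicate. You reduce to showing that $H_L+\tfrac{K}{x_n}dx_n\otimes dx_n$ ``adds at least $\tfrac{\pi}{2}$'' to the arctan sum of $D^2(u_{n-1}(\tilde{x}))$, but no such superadditivity holds for non-commuting summands, and neither of your two regimes closes it. In the small-$x_n$ regime, $\arctan\frac{K}{x_n}$ only \emph{approaches} $\tfrac{\pi}{2}$: there is a strict deficit of order $x_n/K$, so the target $\tfrac{n-2}{2}\pi+\tfrac{\pi}{2}$ is never reached from the $dx_n$ direction alone, no matter how large $K$ is. In the other regime, the determinant-one identity gives $\arctan a+\arctan b=\tfrac{\pi}{2}$ for a single $2\times 2$ block, but that block's eigendirections live in the $(dx_i,dx_n)$-plane, which overlaps the range of $D^2u_{n-1}$; you cannot add its $\tfrac{\pi}{2}$ to the $\tfrac{n-2}{2}\pi$ from $u_{n-1}$ without overcounting. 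The missing idea — and the reason the $\sum_i(L(x_i)+L(x_n)-L(x_i+x_n))$ terms are in the barrier at all — is to bound each $2\times 2$ Hessian \emph{below by a diagonal form} $C^{-1}x_n\,dx_i^2-\frac{C}{x_n}dx_n^2$ (paying a large negative $dx_n$-entry, absorbed by $K/x_n$ for $K\gg C$), so that the full Hessian is bounded below by a matrix diagonal in the $g$-orthogonal frame $(e_1,\ldots,e_{n-1},dx_n)$ with entries $\lambda_i'+C^{-1}x_n$ and $\gtrsim K/x_n$. Then the strictly positive tangential gain $\arctan(\lambda_i'+C^{-1}x_n)-\arctan\lambda_i'\gtrsim x_n$ (valid because the smallest $\lambda_i'$ is bounded above by $\cot\frac{\pi}{2n}$) beats the $O(x_n/K)$ deficit once $K$ is large. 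By treating $H_L$ merely as ``positive semidefinite'' you discard precisely this quantitative tangential contribution, and the argument cannot close without it.

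The boundary comparison also needs one more input. Saying the negative terms ``absorb any discrepancy'' for $K$ large is not enough: near an edge such as $\{x_1=x_n=0\}$ the comparison term $Kx_n\log\frac{x_n}{2\pi}$ tends to zero, so it can only dominate a discrepancy that itself vanishes at rate $O(x_n|\log x_n|)$. That rate is supplied by the inductively established boundary modulus of continuity (\ref{boundarymodulus}) for the lower-dimensional solutions; the acuteness hypothesis only ensures $\tilde{x}\in\Delta_{n-1}$ so that $u_{n-1}(\tilde{x})$ is defined. You should invoke (\ref{boundarymodulus}) explicitly to quantify the discrepancy between $u_{n-1}(\tilde{x})$ and the boundary datum before letting $K$ absorb it.
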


\begin{proof}
For $K\gg C(n-1)$, the negativity of the term  $K x_n \log \frac{x_n}{2\pi}$ for $x_n>0$ would overwhelm the boundary modulus of continuity in the lower dimensional problems, hence on $\partial \Delta_n$, we have $\underbar{u}_n\leq u_n|_{\partial \Delta_n}$.

We now estimate the Hessian eigenvalues of $\underbar{u}_n$. Notice that the Hessian of $   L(x_n)+ L(x_i)-L(x_i+x_n)$ (computed in Lem \ref{2DHessian}) is bounded below by $C^{-1} x_n dx_i^2- \frac{C}{x_n} dx_n^2$. Thus after summation,
\[
D^2 \underbar{u}_n \geq D^2 u_{n-1} +C^{-1} x_n\sum_1^{n-1} dx_i^2 + (K-C) x_n^{-1}dx_n^2,
\]
where we freely change $C$ from line to line. Let $e_1, \ldots e_{n-1}$ be the orthonormal eigenbasis of $D^2u_{n-1}$  on $\R^{n-1} $, with respect to the metric $g=g^{ij} dx_idx_j$, and denote the eigenvalues as $\lambda_1'\leq \ldots \leq\lambda_{n-1}'$. By induction $\sum_1^{n-1} \arctan \lambda_i'=\frac{n-2}{2}\pi$, so $0\leq \lambda_1' \leq  \cot \frac{\pi}{2n}\ll K$.

We  require $K\gg C$. Then
\[
C^{-1} x_n\sum_1^{n-1} dx_i^2 + (K-C) x_n^{-1}dx_n^2\geq  \frac{1}{2C} x_n\sum_1^{n-1} d\tilde{x}_i^2 + \frac{K}{2} x_n^{-1}dx_n^2
\]
so we get a lower bound (with some changed constant $C$),
\[
D^2 \underbar{u}_n \geq \sum_1^{n-1} (\lambda_i' +C^{-1}x_n) e_i^*\otimes e_i^* + \frac{K}{2x_n} dx_n^2. 
\]
The main point of using $\tilde{x}_i$ in favour of $x_i$ is that $e_i^*, dx_n$ are all $g$-orthogonal, so we can easily estimate the matrix arctan of the RHS:
\[
\begin{split}
& \arctan D^2 \underbar{u}_n\geq \sum_1^{n-1} \arctan (\lambda_i' +C^{-1}x_n) + \arctan \frac{K}{Cx_n} 
\\
& \geq \sum_1^{n-1} \arctan \lambda_i' +C^{-1}x_n + ( \frac{\pi}{2}- CK^{-1}x_n ) 
\\
& = \frac{(n-1)}{2} \pi+ C^{-1}x_n- CK^{-1}x_n.  
\end{split}
\]
Here the second line uses that $\lambda_1'\leq \cot (\frac{\pi}{2n})\leq C$. Choosing $K\gg C$, we obtain
\[
\arctan D^2 \underbar{u}_n\geq \frac{(n-1)}{2} \pi,
\]
hence the claim.
\end{proof}

In the general case, without the acute simplex assumption, the above barrier is not globally defined on $\Delta_n$. Our remedy is an inductive sequence of barriers, which aims to prove boundary continuity from vertices up to codim one faces. These will depend on parameters $1\ll K_0\ll K_1\ll \ldots  K_{n-1}$.
The barrier for the vertex $(0,\ldots 0)$ is simply
\[
\underbar{u}_{n,0}= K_0 \sum_1^n x_i\log \frac{x_i}{2\pi}.
\]
The barrier for the $m$-dimensional face $\Delta_m\subset \partial\Delta_n$ is
\[
\underbar{u}_{n,m}= u_{m}(\tilde{x}_1,\ldots \tilde{x}_m) +\sum_{i\leq m}\sum_{j>m} ( L(x_i)+ L(x_j)-L(x_i+x_j) ) 
+  K_m \sum_{m+1}^n x_j \log \frac{x_j}{2\pi}.
\]
Here $\tilde{x}_i$ are linear functions of the form $x_i+ \sum_{j> m} a_{ij} x_j$ for some $a_{ij}\in \R$, such that $g(d\tilde{x}_i, dx_j)=0$ for all $i\leq m$ and $j>m$. As a caveat, $\tilde{x}_i$ depends on the choice of $m$. The barrier $\underbar{u}_{n,m}$ is only defined on the subset of $\Delta_n$ with $(\tilde{x}_i)\in \Delta_m$, namely
\[
0\leq \tilde{x}_i\leq \pi, \quad \sum_1^m \tilde{x}_i\leq \pi.
\]
Almost the same argument as above shows

\begin{lem}
On the interior of the domain of definition of $\underbar{u}_{n,m}$, it satisfies $\arctan D^2 \underbar{u}_{n,m} \geq \frac{(n-1)}{2} \pi  $. 
\end{lem}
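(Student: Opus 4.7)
The plan is to adapt the argument for the acute-simplex lemma, with extra bookkeeping coming from the double sum over pairs $(i,j)$ with $i\leq m<j$. First I would expand $D^2 \underbar{u}_{n,m}$ as the sum of three pieces: the Hessian of $u_m(\tilde{x})$, which lives on the $d\tilde{x}_i$-subspace; the sum of the 2D Hessians of $L(x_i)+L(x_j)-L(x_i+x_j)$ over all pairs; and the diagonal barrier $K_m \sum_{j>m} x_j^{-1} dx_j^2$. Lemma \ref{2DHessian}, via the small-$x_j$ calculation already used, gives each pairwise 2D Hessian a lower bound $C^{-1} x_j\, dx_i^2 - C x_j^{-1}\, dx_j^2$. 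Summing over $i\leq m$ and $j>m$ produces $C^{-1}\bigl(\sum_{j>m} x_j\bigr)\sum_{i\leq m} dx_i^2 - Cm \sum_{j>m} x_j^{-1} dx_j^2$.

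Next I would convert $\sum_i dx_i^2$ to $\sum_i d\tilde{x}_i^2$. Since $dx_i = d\tilde{x}_i - \sum_{j>m} a_{ij}\, dx_j$ with $a_{ij}$ controlled by $g$, the elementary inequality $(a-b)^2 \geq \tfrac12 a^2 - b^2$ yields $\sum_i dx_i^2 \geq \tfrac12 \sum_i d\tilde{x}_i^2 - C\sum_j dx_j^2$, and the error is swallowed by the $K_m$-term using $dx_j^2 \leq \pi\, x_j^{-1} dx_j^2$ once $K_m$ is large compared to the lower-order constants. After these reductions I would obtain the block-diagonal lower bound
\[
D^2 \underbar{u}_{n,m} \geq D^2 u_m(\tilde{x}) + C^{-1}\Bigl(\sum_{j>m} x_j\Bigr)\sum_{i\leq m} d\tilde{x}_i^2 + \tfrac{K_m}{2}\sum_{j>m} x_j^{-1}\, dx_j^2 .
\]

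The decisive structural feature is that the $d\tilde{x}_i$-span and the $dx_j$-span are $g$-orthogonal by the very definition of $\tilde{x}_i$, so the arctan of the whole matrix splits cleanly as the sum of arctans of the two blocks. For the $\tilde{x}$-block, the inductive hypothesis gives $\sum_k \arctan \lambda'_k = \tfrac{m-1}{2}\pi$ with $\lambda'_1 \leq \cot(\pi/2m) \leq C$, so by Weyl monotonicity and the mean value theorem applied near the bounded $\lambda'_1$, the perturbation of size $C^{-1}\sum_j x_j$ along the smallest eigendirection produces a gain of $C^{-1}\sum_j x_j$ in the arctan total, while the larger eigenvalues only increase it. For the $x_j$-block, the bound $\arctan\mu \geq \tfrac{\pi}{2} - \mu^{-1}$ controls the arctan sum by $\tfrac{(n-m)\pi}{2}$ minus the trace of the inverse, and a direct computation using norm equivalence between the $g$-cometric and $\mathrm{diag}(x_j)$ on the $(n-m)$-dimensional $dx_j$-subspace gives trace of inverse $\leq C K_m^{-1} \sum_j x_j$. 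Adding the two contributions,
\[
\arctan D^2 \underbar{u}_{n,m} \geq \tfrac{(n-1)\pi}{2} + \bigl(C^{-1} - C K_m^{-1}\bigr)\sum_{j>m} x_j ,
\]
which is $\geq \tfrac{(n-1)\pi}{2}$ provided $K_m$ is taken sufficiently large depending on $n$ and $g$.

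The main new obstacle relative to the acute-simplex case is the multi-dimensional $x_j$-block: when $n-m>1$ one cannot simply read off $\arctan$ of a single diagonal entry, and must instead bound the arctan defect $\sum_l \bigl(\tfrac{\pi}{2}-\arctan\mu_l\bigr)$ uniformly in the geometry of the $dx_j$-subspace. This is settled by the trace-of-inverse estimate above, which uses only positive-definiteness of the barrier contribution and the fact that $\sum_j x_j$ is uniformly bounded on $\Delta_n$, both of which are robust to the precise shape of the simplex.
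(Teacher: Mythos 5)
Your proposal is correct and follows essentially the same route as the paper, which proves the acute-simplex case in detail and then asserts that ``almost the same argument'' gives this lemma: lower-bound each pairwise 2D Hessian by a block-compatible form, pass from $dx_i$ to $d\tilde x_i$ at the cost of an error absorbed by the large $K_m$ term, use the $g$-orthogonality of the $d\tilde x_i$ and $dx_j$ spans to split the arctan, and trade the $O(\sum_{j>m}x_j)$ gain from the $u_m$-block against the $O(K_m^{-1}\sum_{j>m}x_j)$ defect from the $x_j$-block. Your explicit treatment of the multi-dimensional $x_j$-block via the trace-of-inverse bound is a correct filling-in of the detail the paper leaves implicit.
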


We now prove the existence theorem \ref{Existence}. 

\begin{proof}
We can find strictly convex smooth domains $\Omega_\epsilon\subset \Delta_n$, whose boundary is within $\epsilon$-Hausdorff distance to $\partial \Delta_n$. The inductive hypothesis implies that the boundary data $u_n|_{\partial \Delta_n}$ has $O(x\log x)$-boundary modulus of continuity bound. We can extend $u_n|_{\partial \Delta_n}$ to some function on the $\epsilon$-neighbourhood of $\partial \Delta_n$ preserving the $O(x\log x)$-boundary modulus of continuity, so by restriction we obtain a function on $\partial \Omega_\epsilon$.

By the main result of Harvey-Lawson \cite{HarveyLawson}, there is a unique continuous viscosity solution $u_{n,\epsilon}$ on $\Omega_\epsilon$ with this boundary data. The induction hypothesis implies $O(x\log x)$-modulus of continuity for the barrier functions $\bar{u}_n, \underbar{u}_{n,0}$, whence on $\partial \Omega_\epsilon$, the boundary data satisfies
\[
\underbar{u}_{n,0}+ C\epsilon \log \epsilon \leq u_{n,\epsilon} \leq  \bar{u}_n - C\epsilon\log \epsilon, 
\]
Since $\bar{u}_n, \underbar{u}_{n,0}$ are supersolutions/subsolutions, the inequality holds also on $\Omega_\epsilon$. In particular, $u_{n,\epsilon}$ is bounded independent of $\epsilon$. The regularity theorem of Wang-Yuan \cite{Yuan}\cite{Yuan2} then implies uniform interior $C^{k,\alpha} $ estimates of $u_{n,\epsilon}$ on any fixed compact subdomain of $\text{Int}(\Delta_n$), independent of small enough $\epsilon$. 
We can then take a subsequential limit as $\epsilon\to 0$, to obtain a smooth limit $u_n$ on the interior of $\Delta_n$. This limit then satisfies
\[
\underbar{u}_{n,0} \leq u_{n} \leq  \bar{u}_{n} .
\]
This gives the modulus of continuity at the vertices:
\[
0\geq u_{n}\geq K_0 \sum_1^n x_i\log \frac{x_i }{2\pi}. 
\]

Next we compare $u_{n,\epsilon}$ with the lower barrier $\underbar{u}_{n,1}$ %associated to the one dimensional face $\Delta_1$, 
on the domain of definition of $\underbar{u}_{n,1}$, which may be smaller than $\Delta_n$. 
The point is that outside this domain of definition, the previous barrier $\underbar{u}_{n,0}$ already provided the requisite a priori lower bound on $u_{n,\epsilon}$.
By the choice $K_1\gg K_0$, we can ensure that on the boundary of the domain of definition, we have
\[
\underbar{u}_{n,1}+ C\epsilon\log \epsilon \leq u_{n,\epsilon}. 
\]
Passing to the $\epsilon\to 0$ limit, we  obtain the $O( \sum_2^n x_i\log x_i)$ boundary continuity along $\Delta_1$. Continuing with this inductive yoga, we obtain the $O( \sum_{m+1}^n x_i \log x_i)$ boundary continuity along $\Delta_m$, until we reach the codimension one face $m=n-1$. 
In particular, we obtain the continuity of $u_n$ up to the boundary, with the prescribed boundary value, so $u_n$ solves the Dirichlet problem.

Finally, the uniqueness of the solution follows from the standard comparison principle argument, \cf eg. Harvey-Lawson \cite[Thm 6.3]{HarveyLawson}.
\end{proof}

\subsection{Improved lower bound near vertices}

Near the vertices of $\Delta_n$ the boundary modulus of continuity bound can be improved by removing the log factor. We focus on the neighbourhood of the origin.

\begin{prop}\label{vertexcontinuity}
Let $n\geq 2$. 
For $x\in \Delta_n$, we have an improved estimate
\[
u_n(x_1,\ldots x_n) \geq %C(n) \sum_{i,j} ( L(x_i)+L(x_j)-L(x_i+x_j) )
-C(n) \sum_1^n x_i,
\]
for some constant $C(n)$ depending only on the dimension $n$ and $g_{ij}$. 
\end{prop}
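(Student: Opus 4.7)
The plan is an induction on $n$. For the base case $n=2$, I would compute directly from (\ref{2Dsolution}): using the identity $L(\pi - y) = L(\pi) - L(y)$ from Lemma \ref{specialfunctionLproperty}, one rewrites $\sqrt{\det g}\,u_2 = L(x_1) + L(x_2) - L(x_1+x_2)$, and substituting the asymptotic $L(y) = y\log y - y + O(y^2)$ the logarithms recombine into a binary entropy:
\[
\sqrt{\det g}\, u_2(x) = -(x_1+x_2)\bigl[\xi \log(1/\xi) + (1-\xi)\log(1/(1-\xi))\bigr] + O((x_1+x_2)^2)
\]
with $\xi = x_1/(x_1+x_2)$, the bracketed expression being bounded by $\log 2$. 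By symmetry of the formula the same expansion works at the other two vertices of $\Delta_2$, and away from the vertices $u_2$ is bounded while $\sum x_i$ is bounded below, giving the bound globally with a suitable $C(2)$.

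For the inductive step, assuming $u_{n-1}(y) \geq -C(n-1)\sum y_j$ on $\Delta_{n-1}$, I construct a sharper lower barrier at the origin. The candidate is the subsolution $\underbar{u}_{n,n-1}$ from the proof of Theorem \ref{Existence} with the logarithmic correction $K x_n \log(x_n/2\pi)$ dropped:
\[
\underbar{u}^{\mathrm{lin}}(x) := u_{n-1}(\tilde x_1, \ldots, \tilde x_{n-1}) + \sum_{i=1}^{n-1} \bigl[L(x_i) + L(x_n) - L(x_i+x_n)\bigr],
\]
where $\tilde x_i = x_i + a_i x_n$ are the $g$-orthogonalised coordinates. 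The base-case entropy estimate applied to each summand gives $L(x_i)+L(x_n)-L(x_i+x_n) \geq -(\log 2)(x_i+x_n)$, so combined with the induction hypothesis one obtains $\underbar{u}^{\mathrm{lin}}(x) \geq -C(n)\sum_j x_j$ on a neighbourhood $D_\delta = \{\sum x_i < \delta\}$ of the origin. Boundary matching holds: on $\{x_n=0\}$ the 2D-model terms vanish and $\underbar{u}^{\mathrm{lin}} = u_{n-1} = u_n|_{x_n=0}$; on $\{x_i=0\}$ for $i \leq n-1$ the restriction reduces inductively to an $(n-1)$-dim subsolution matching $u_n|_{x_i=0}$, giving $\underbar{u}^{\mathrm{lin}} \leq u_n$ by the $(n-1)$-dim comparison principle; on $\partial D_\delta \cap \mathrm{Int}(\Delta_n)$ the cruder log-bound on $u_n$ already suffices once $\delta$ is small.

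The main obstacle is verifying the subsolution property $\arctan D^2 \underbar{u}^{\mathrm{lin}} \geq (n-1)\pi/2$, now without the safety margin previously provided by the $K/x_n$ contribution to the Hessian. In the orthogonalised frame, $D^2 \underbar{u}^{\mathrm{lin}}$ has one large eigenvalue $\sim (n-1)/x_n$ in the $e_n$-direction (each 2D-model summand contributes $\cot x_n$) while the other eigenvalues are $O(x_n)$-perturbations of those of $D^2 u_{n-1}$, so the $\arctan$-sum equals $(n-1)\pi/2$ to leading order with an $O(x_n)$ correction that could a priori have either sign. My plan is either to verify this correction is nonnegative directly from the explicit trigonometric form of $D^2 u_{n-1}$ (equivalently, to analyse the sign of $\mathrm{tr}((I+(D^2 u_{n-1})^2)^{-1}(P_1 - P_2 P_3^{-1} P_2^T))$ where $P_1, P_2, P_3$ are the blocks of the Hessian correction), or else to absorb it by adding a small positive Lipschitz correction vanishing on the boundary faces of $\Delta_n$ (for instance a multiple of $\prod_i \sin x_i$) that preserves both the boundary matching and the Lipschitz bound on $\underbar{u}^{\mathrm{lin}}$. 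With the subsolution property in hand, the comparison principle on $D_\delta$ yields $u_n \geq \underbar{u}^{\mathrm{lin}} \geq -C(n)\sum x_i$ near the origin, and combining with the $L^\infty$ bound on $u_n$ away from the origin gives the global estimate.
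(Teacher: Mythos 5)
There is a genuine gap at exactly the point you flag as ``the main obstacle'': the subsolution property $\arctan D^2 \underbar{u}^{\mathrm{lin}} \geq \frac{n-1}{2}\pi$ is never established, and neither of your two proposed fixes is viable. Plan (a) requires ``the explicit trigonometric form of $D^2 u_{n-1}$'', but for $n\geq 4$ the function $u_{n-1}$ is only known as the solution of a Dirichlet problem — there is no explicit form, so the sign of the $O(x_n)$ correction cannot be read off. Plan (b) fails because the Hessian of $\prod_i \sin x_i$ near the origin is approximately that of $\prod_i x_i$ (off-diagonal entries $\sim \prod_{k\neq i,j} x_k$, vanishing diagonal), which is of mixed signature and of size $O(|x|^{n-2})$ — too small for $n\geq 4$ and not positive for any $n$ — so it cannot absorb an $O(x_n)$ deficit in the $\arctan$-sum. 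Moreover, your spectral picture (``one large eigenvalue $\sim (n-1)/x_n$, the rest $O(x_n)$-perturbations of $D^2 u_{n-1}$'') is only valid in the regime $x_n \ll x_1,\ldots,x_{n-1}$, i.e.\ near the interior of the face $\Delta_{n-1}$. Near the vertex, where the proposition actually has content, \emph{all} coordinates are small, each summand $L(x_i)+L(x_n)-L(x_i+x_n)$ has a large eigenvalue $\sim \min(x_i,x_n)^{-1}$ whose direction depends on the ratio $x_i/x_n$, and $D^2 u_{n-1}$ itself degenerates (its largest eigenvalue blows up at the vertex of $\Delta_{n-1}$), so the perturbative analysis does not apply. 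The boundary matching on the faces $\{x_i=0\}$, $i<n$, is also asserted rather than proved: the restriction of $u_{n-1}(\tilde x)$ to such a face is $u_{n-1}$ evaluated at shifted arguments, and comparing it to the boundary datum (which is the $(n-1)$-dimensional solution in the variables $x_1,\ldots,\hat{x}_i,\ldots,x_n$) requires an argument you have not supplied.

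The paper avoids all of this with a structurally different barrier that does not involve $u_{n-1}$ at all: $v = K\sum_{i,j}\bigl(L(x_i)+L(x_j)-L(x_i+x_j)\bigr) - K\sum_1^n x_i$ with $K\gg 1$. Summing over \emph{all} pairs produces, via the chain of eigendirections $e_{1,2},e_{2,3},\ldots,e_{n-1,n}$ and a minmax argument, $(n-1)$ eigenvalues bounded below by $K/(Cx_k)$ together with a smallest eigenvalue bounded below by $C^{-1}Kx_{n-1}$; the resulting margin $\arctan D^2 v \geq \frac{n-1}{2}\pi - \frac{Cx_{n-1}}{K} + \frac{Kx_{n-1}}{C}$ is manifestly nonnegative once $K\gg C$, with no sign condition to check. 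The overall multiplicative constant $K$ is the safety margin you are missing; it costs only a worse constant $C(n)=O(K)$ in the final linear lower bound, which the statement tolerates. Your base case $n=2$ is fine and equivalent to the paper's estimate $L(x_1)+L(x_2)-L(x_1+x_2)\geq -C(x_1+x_2)$, and the outer structure (comparison on $\{\sum x_i^2\leq\delta\}$, induction on $n$ for the boundary faces) matches the paper; it is the choice of barrier in the inductive step that needs to be replaced.
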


The proof depends on another barrier argument. Let
\[
v= K \sum_{i,j} ( L(x_i)+L(x_j)-L(x_i+x_j) )-K\sum_1^n x_i,\quad K\gg 1.
\]

\begin{lem}
$\arctan D^2 v\geq \frac{n-1}{2}\pi$ in a neighbourhood of the origin.
\end{lem}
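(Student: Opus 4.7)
The plan starts by reducing to a Hessian computation. Since the linear term $-K\sum x_i$ drops out of $D^2v$, we have $D^2v=K\,D^2w$ where $w:=\sum_{i<j}\bigl(L(x_i)+L(x_j)-L(x_i+x_j)\bigr)$. Using $L''(t)=\cot t$ from Lemma~\ref{specialfunctionLproperty}, the Hessian $H:=D^2w$ has entries $H_{ii}=(n-1)\cot x_i-\sum_{j\neq i}\cot(x_i+x_j)$ and $H_{ij}=-\cot(x_i+x_j)$ for $i\neq j$. Regrouping yields the convenient quadratic form
$$\xi^T H\xi=(n-1)\sum_i\cot(x_i)\,\xi_i^2\;-\;\sum_{i<j}\cot(x_i+x_j)\,(\xi_i+\xi_j)^2.$$

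I would next isolate the leading singular part by writing $\cot t=1/t+r(t)$ with $r(t)=-t/3+O(t^3)$ smooth at $0$, decomposing $H=\tilde H+E$ with $E$ uniformly bounded near the origin. For the leading piece, the sharp Cauchy--Schwarz inequality $(\xi_i+\xi_j)^2/(x_i+x_j)\leq \xi_i^2/x_i+\xi_j^2/x_j$ summed over $i<j$ gives $\sum_{i<j}(\xi_i+\xi_j)^2/(x_i+x_j)\leq(n-1)\sum_i\xi_i^2/x_i$, so $\tilde H\geq 0$ on the positive orthant, with equality exactly when $\xi$ is proportional to $x$. A direct check $\tilde H(x)\cdot x=(n-1)-\sum_{j\neq i}(x_i+x_j)/(x_i+x_j)=0$ confirms that $x$ is the unique zero eigenvector of $\tilde H$. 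Homogeneity $\tilde H(x)=|x|^{-1}\tilde H(\hat x)$ combined with compactness of the positive-orthant unit sphere shows that the remaining $n-1$ eigenvalues of $\tilde H(x)$ are bounded below by $c_0/|x|$ for some $c_0>0$.

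The key technical step is to show that the bounded perturbation $E$ opens the degenerate mode of $\tilde H$ in the \emph{positive} direction. From $E_{ii}=(s-x_i)/3+O(|x|^3)$ and $E_{ij}=(x_i+x_j)/3+O(|x|^3)$ (where $s=\sum x_i$), a direct computation gives
$$x^T E\,x=s\sum_i x_i^2-\sum_i x_i^3+O(|x|^5)=\sum_{i\neq j}x_i^2x_j+O(|x|^5),$$
a strictly positive cubic on the interior of the positive orthant. First-order perturbation theory applied to the simple zero eigenvalue of $\tilde H$ (the gap to the next eigenvalue is $\geq c_0/|x|$, while $\|E\|=O(1)$) then yields $\mu_{\min}(H)\geq c_1|x|$ for some $c_1>0$ in a small enough neighborhood of the origin. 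Consequently the eigenvalues of $D^2v=KH$ consist of $n-1$ values $\geq Kc_0/(2|x|)$ and one value $\geq Kc_1|x|$.

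To finish, I would use $\arctan a=\pi/2-\arctan(1/a)$ for $a>0$ together with $\arctan t\geq t/2$ on $[0,1]$ to estimate
$$(n-1)\arctan\frac{Kc_0}{2|x|}+\arctan(Kc_1|x|)\geq \frac{n-1}{2}\pi+|x|\!\left[\frac{Kc_1}{2}-\frac{2(n-1)}{Kc_0}\right]\!.$$
Choosing $K$ large enough that $K^2c_0c_1>4(n-1)$ makes the bracket non-negative, giving $\arctan D^2v\geq \tfrac{n-1}{2}\pi$ throughout a neighborhood of the origin. The main obstacle is the perturbative bound $\mu_{\min}(H)\geq c_1|x|$: merely knowing that $E$ is bounded would leave open the possibility of a small negative eigenvalue of $H$, which would completely destroy the phase estimate. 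One genuinely must track the sign and cubic order of $x^T E x/|x|^2$, which is exactly what the positivity of $\sum_{i\neq j}x_i^2x_j$ supplies. The ambient metric $g$ only enters through harmless constants when diagonalising $H$ with respect to $g$.
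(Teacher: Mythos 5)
Your reduction to $D^2v=K\,D^2w$, the formula $\xi^TH\xi=(n-1)\sum_i\cot(x_i)\xi_i^2-\sum_{i<j}\cot(x_i+x_j)(\xi_i+\xi_j)^2$, and the observation that the leading part $\tilde H$ is positive semidefinite with kernel $\R x$ are all correct (indeed one has the clean identity $\xi^T\tilde H\xi=\sum_{i<j}(x_j\xi_i-x_i\xi_j)^2/(x_ix_j(x_i+x_j))$, which also repairs your compactness argument --- $\tilde H$ is not continuous up to the boundary of the orthant, so "compactness of the positive-orthant unit sphere" does not by itself give the bound $c_0/|x|$, though that bound is true). The fatal step is the claim $\mu_{\min}(H)\geq c_1|x|$. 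This is false. Take $x=(\delta,\ldots,\delta,\epsilon)$ with $\delta=\epsilon^2\ll\epsilon$: evaluating the quadratic form on $\xi=x/|x|$ kills $\tilde H$ and leaves $x^TEx/|x|^2=\sum_{i\neq j}x_i^2x_j/|x|^2+O(|x|^3)\approx(n-1)\delta$, so $\mu_{\min}(H)\lesssim\delta\ll|x|\approx\epsilon$. (For $n=2$ this is already visible from Lemma \ref{2DHessian}: the eigenvalues are $\approx 1/\min(x_1,x_2)$ and $\approx\min(x_1,x_2)$, and the latter is much smaller than $|x|\approx\max(x_1,x_2)$.) The correct lower bound for the smallest eigenvalue is proportional to the \emph{second largest} coordinate, not to $|x|$. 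With that corrected bound your final budget collapses: the gain $\arctan(K\mu_{\min})\approx K x_{(n-1)}$ must beat the total deficit $\sum_{k\leq n-1}(\pi/2-\arctan\lambda_k)$, and your uniform bound $\lambda_k\geq Kc_0/(2|x|)$ only controls that deficit by $C x_{(n)}/K$; since $x_{(n)}/x_{(n-1)}$ is unbounded near the origin, no fixed $K$ works.

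The missing idea --- which is the heart of the paper's proof --- is that the $n-1$ large eigenvalues must be estimated \emph{individually}, not uniformly: ordering $x_1\leq\cdots\leq x_n$, the $k$-th largest eigenvalue is $\gtrsim K/x_k$ for $k=1,\ldots,n-1$ (the paper gets this by applying min--max to the nested subspaces spanned by the large eigendirections $e_{1,2},\ldots,e_{k,k+1}$ of the individual two-variable Hessians), while the smallest eigenvalue is $\gtrsim Kx_{n-1}$. Then the total deficit is $\lesssim\sum_{k\leq n-1}x_k/K\lesssim x_{n-1}/K$, which is matched against the gain $\gtrsim Kx_{n-1}$ --- both proportional to the \emph{same} quantity $x_{n-1}$ --- and $K\gg C$ closes the argument. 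Your decomposition $H=\tilde H+E$ could in principle be pushed to yield these sharper individual bounds from the sum-of-squares identity above, but as written the proof does not go through.
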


\begin{proof}
By symmetry, without loss $x_1\leq x_2\leq \ldots \leq  x_n\ll 1$. We will focus primarily on the case where $g_{std}=\sum dx_i^2$, and indicate how to adapt to general $g$. The Hessian of each summand $ L(x_i)+L(x_j)-L(x_i+x_j)   $ has been computed in Lem \ref{2DHessian}, which is positive definite, with a large eigenvalue comparable to $ \min(x_i,x_j)^{-1}  $
along an eigendirection $e_{i,j}$ approximated by
$- \partial_{x_i}+ \frac{x_i}{x_j} \partial_{x_j}$. Notice that $e_{1,2}, e_{2,3},\ldots , e_{n-1,n}$ are effectively linearly independent (\ie the $g_{std}$-inner product matrix for these vectors is uniformly elliptic even as $x\to 0$), so we deduce that on the $k$-dimensional subspace
\[
V_k=\text{span} \{  e_{1,2}, \ldots e_{k, k+1}     \},
\]
we have 
\[
D^2 v\geq  \frac{1}{C} K \sum_1^k x_i^{-1} e_{i,i+1}^*\otimes e_{i,i+1}^*\geq    \frac{K}{ C x_k} g_{std}.
\]
Since the general $g$ is uniformly equivalent to $g_{std}$, we see that $D^2 v\geq  \frac{K}{ C x_k} g $ on $V_k$. By the minmax characterisation of eigenvalues, the $k$-th largest eigenvalue of $D^2 v$ with respect to $g$ is bounded below by $\frac{K}{ C x_k} $, for $k=1,\ldots n-1$.

The 2D Hessian of each summand also has a small eigenvalue (with respect to $g_{std}$) comparable to $\min(x_i, x_j)$, along an eigendirection $e_{i,j}'$ approximated by $ \frac{x_i}{x_j} \partial_{ x_i} +\partial_{ x_j}$. Let $e_n= e_{n-1,n}'$, whose associated eigenvalue is bounded below by $C^{-1}x_{n-1}$. Then $e_{1,2},\ldots e_{n-1,n}, e_n$ are effectively linearly independent, and by the minmax arguments as above
\[
D^2 v \geq C^{-1} K x_{n-1} g_{std}\geq C^{-1} K x_{n-1} g.
\]
Hence the smallest eigenvalue of $D^2 v$ is bounded below by $C^{-1} K x_{n-1}$.

Combining the above,
\[
\arctan D^2 v\geq (n-1) \arctan (\frac{K}{Cx_{n-1}}) + \arctan (\frac{Kx_{n-1}}{C})
\geq \frac{n-1}{2}\pi - \frac{Cx_{n-1}}{K}+ \frac{Kx_{n-1}}{C}.
\]
Choosing $K\gg C$ then shows the Lemma.
\end{proof}

We now prove the vertex modulus of continuity Prop. \ref{vertexcontinuity}.

\begin{proof}
In the case $n=2$, without loss $x_1\leq x_2$, and we observe
\[
L(x_1)+L(x_2)-L(x_1+x_2) \geq -Cx_1\log \frac{x_2}{x_1} \geq -C(x_1+x_2).
\]
In general, we argue by induction, and assume the statement is already known for $n-1$.

Consider the barrier $v$ for $K\gg 1$, which is a subsolution for $x$ close to the origin, say $\sum x_i^2\leq \delta$. Since $v\leq 0$, this barrier function is bounded above by $-K\sum x_i$. On the locus $\sum x_i^2=\delta$, by choosing $K$ large enough, the term $-K\sum x_i$ is more negative than the $C^0$-norm of $u_n$, so the barrier lies below $u_n$ for $\sum x_i^2=\delta$. Morever, by the inductive hypothesis, by choosing $K$ large enough, the negativity of $v$ will force the barrier to lie below $u_n$ on $\{ \sum x_i^2\leq \delta \}\cap \partial \Delta_n$.

Now by the comparison principle, the barrier $v\leq u_n$ for all $x\in \Delta_n$ inside $\sum x_i^2\leq \delta$. Since $v\geq -CK\sum_1^n x_i$, 
this implies Prop. \ref{vertexcontinuity}.
\end{proof}

\subsection{Improved upper bound and gradient divergence}

Our next goal is to improve the upper bound on $u_n$ close to the faces in $\partial \Delta_n$ but away from the vertices, to deduce the divergence of the gradient. The proof involves a delicate barrier argument.
We will use the following matrix inequality, which must be well known, but we include a proof for the reader's convenience.

\begin{lem}\label{arctanmatrix}
Suppose $A$ is a positive semidefinite symmetric matrix, and $\{e_i\}$ be an orthonormal basis of $\R^n$. Then
\[
\arctan A \leq \sum_1^n \arctan A(e_i, e_i). 
\]
\end{lem}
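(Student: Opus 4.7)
The plan is to read $\arctan A$ in the sense consistent with the rest of the paper, namely $\arctan A = \sum_i \arctan \lambda_i(A)$, and then compare the eigenvalue sum to the diagonal sum via Jensen's inequality applied through the spectral decomposition. The positive semidefiniteness of $A$ is the key hypothesis, because it confines both the eigenvalues $\lambda_j$ and the diagonal entries $a_{ii}=A(e_i,e_i)$ to $[0,\infty)$, where $\arctan$ is concave. (On all of $\R$ the function $\arctan$ is not concave, so the inequality would fail without the sign hypothesis.)

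The first step is to diagonalise $A=\sum_j\lambda_j v_jv_j^T$ with $\{v_j\}$ an orthonormal eigenbasis, and to expand
\[
a_{ii}=A(e_i,e_i)=\sum_j \lambda_j\,|\langle v_j,e_i\rangle|^2=\sum_j p_{ij}\lambda_j,
\qquad p_{ij}:=|\langle v_j,e_i\rangle|^2.
\]
Because both $\{v_j\}$ and $\{e_i\}$ are orthonormal, the matrix $(p_{ij})$ is doubly stochastic: $\sum_i p_{ij}=\sum_j p_{ij}=1$. In particular each row $(p_{ij})_j$ is a probability vector, so $a_{ii}$ is a convex combination of the $\lambda_j$'s.

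The second step is to apply Jensen's inequality to the concave function $\arctan$ on $[0,\infty)$, row by row:
\[
\arctan(a_{ii})=\arctan\!\Bigl(\sum_j p_{ij}\lambda_j\Bigr)\geq \sum_j p_{ij}\arctan(\lambda_j).
\]
Summing over $i$ and interchanging the order of summation, then using $\sum_i p_{ij}=1$, yields
\[
\sum_i \arctan A(e_i,e_i)\;\geq\;\sum_j \arctan(\lambda_j)\sum_i p_{ij}\;=\;\sum_j \arctan(\lambda_j)\;=\;\arctan A,
\]
which is the claimed inequality. I do not expect any genuine obstacle here: the whole content is the double-stochasticity of $(p_{ij})$ plus concavity of $\arctan$ on $[0,\infty)$, i.e.\ the Schur–Horn / Hardy–Littlewood–Pólya majorisation principle specialised to a concave scalar function.
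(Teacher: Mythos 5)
Your proof is correct and is essentially identical to the paper's: both diagonalise $A$, write $A(e_i,e_i)$ as a convex combination of the eigenvalues via $|\langle v_j,e_i\rangle|^2$, apply concavity of $\arctan$ on $[0,\infty)$, and sum using orthonormality of $\{v_j\}$. The only difference is cosmetic — you make the double stochasticity and the majorisation viewpoint explicit, while the paper compresses the Jensen step into a single display.
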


\begin{proof}
We can write $A$ in terms of an orthonormal eigenbasis as $A=\sum_1^n \lambda_i f_i\otimes f_i$. Then using the concavity of $\arctan$ on $\R_{\geq 0}$, 
\[
\begin{split}
& \sum_1^n \arctan A(e_i, e_i)= \sum_{i} \arctan (\sum_j \lambda_j (f_j, e_i)^2 )
\\
\geq & \sum_i \sum_j (f_j, e_i)^2 \arctan \lambda_j = \sum_j \arctan \lambda_j=\arctan A.
\end{split}
\]
\end{proof}

\begin{Notation}
Given a small parameter $\delta>0$, we shall denote the shrunken boundary face
\begin{equation}\label{Deltamdelta}
\Delta_m^\delta= \{  x=(x_1,\ldots x_m)\in  \Delta_m| \text{dist}(x, \partial \Delta_m)\gtrsim \delta   \}.
\end{equation}
This notation allows for the freedom to slightly shrink the domain in later arguments.
\end{Notation}

\begin{prop}\label{logdivergencebarrier}
Let $n\geq 2$ and $1\leq m\leq n-1$. In the neighbourhood of the interior of the $\Delta_m$ face,
\[
(x_1,\ldots x_m)\in \Delta_m^\delta, \quad x_{m+1},\ldots x_n\ll 1,
\]
then
\[
\begin{split}
& u_n(x_1,\ldots x_n)-u_m(x_1,\ldots x_m) 
\\
& \leq -C(\delta)^{-1}\max_{m+1}^n x_k |\log x_k|^{1/2} + C(\delta)' \max_{m+1}^n x_k.
\end{split}
\]
where $C, C'$ are constants which depend only on $n, g, \delta$.

\end{prop}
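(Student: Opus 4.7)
My plan is induction on $n$, with the base case $n=2$, $m=1$ covered by the explicit $u_2$ of Section~2.2. Assume the Proposition in all dimensions $<n$; I will prove it in dimension $n$ for each $m\in\{1,\ldots,n-1\}$ by constructing an upper barrier $\bar u$ on a neighbourhood of the interior of $\Delta_m$ and invoking the Harvey-Lawson comparison principle \cite{HarveyLawson}. The candidate barrier is
\[
\bar u(x) = u_m(\tilde x_1,\ldots,\tilde x_m) - \eta \sum_{j=m+1}^n \psi(x_j) + A\sum_{j=m+1}^n x_j,
\]
where $\tilde x_i = x_i + \sum_{j>m} a_{ij} x_j$ are the $g$-orthogonal lifts (so $g(d\tilde x_i, dx_j)=0$ for $i\leq m<j$), and
\[
\psi(t) = \int_0^t \bigl(-\log(s/2\pi)\bigr)^{1/2}\, ds,
\]
which is smooth and concave on $(0,2\pi)$, vanishes at $t=0$, and satisfies $\psi(t)\sim t|\log t|^{1/2}$ as $t\to 0^+$. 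Here $\eta>0$ is small and $A>0$ is large, both depending on $n, g, \delta$.

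\textbf{Supersolution property.} The orthogonality makes $D^2\bar u$ block-diagonal in an adapted $g$-orthonormal frame, with $m\times m$ block $D^2_{\tilde x}u_m$ and $(n-m)\times(n-m)$ diagonal block $\mathrm{diag}(-\eta\psi''(x_j))_{j>m}$; the linear $A\sum x_j$ does not contribute. A direct computation gives $\psi''(t) = -1/(2t(-\log(t/2\pi))^{1/2})<0$, so the second block is positive. Summing arctans and using that $u_m$ solves the $m$-dimensional phase $(m-1)\pi/2$ equation,
\[
\arctan D^2\bar u = \frac{m-1}{2}\pi + \sum_{j>m}\arctan\!\Bigl(\frac{\eta}{2x_j(-\log(x_j/2\pi))^{1/2}}\Bigr) < \frac{n-1}{2}\pi,
\]
the strict inequality because each arctan is $<\pi/2$. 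Thus $\bar u$ is a (strict) supersolution in the interior of its domain.

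\textbf{Boundary verification.} Work on the region $R = \{x\in\Delta_n : (x_1,\ldots,x_m)\in\Delta_m^\delta,\; x_{m+1}+\cdots+x_n\leq\epsilon\}$ for $\epsilon$ sufficiently small. On $R\cap\Delta_m$ I have $\psi(0)=0$ and $\tilde x=x$, so $\bar u = u_m = u_n$. On a transverse face $R\cap\{x_j=0\}$ for $j>m$, $u_n$ restricts to $u_{n-1}$, and the inductive hypothesis applied to the face $\Delta_m$ of this $\Delta_{n-1}$ yields $u_{n-1}\leq u_m - C_{n-1}^{-1}\max_{k>m,k\neq j} x_k|\log x_k|^{1/2} + C'_{n-1}\max x_k$. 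The linear $A\sum x_k$ term absorbs both the $C'_{n-1}$ contribution and the Taylor error $u_m(\tilde x)-u_m(x)=O_\delta(\sum x_k)$, while $\eta$ is chosen so that $(n-m-1)\eta\psi(\max x_k)\leq \tfrac12 C_{n-1}^{-1}\max x_k|\log x_k|^{1/2}$; together these give $\bar u\geq u_n$ on each such face. On the artificial boundaries $\{x\in\partial\Delta_m^\delta\}$ and $\{\sum_{j>m} x_j=\epsilon\}$, both $u_m$ (continuous on the compact set $\Delta_m^\delta$) and $u_n$ (bounded above by $0$ via the maximum principle) are controlled, so choosing $A$ large relative to $\|u_m\|_{L^\infty(\Delta_m^\delta)}$ forces $\bar u\geq u_n$ there as well. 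The comparison principle then yields $u_n\leq\bar u$ on $R$, and rearranging $\bar u-u_m$ gives the claimed bound with $C(\delta)^{-1}=\eta$ and $C'(\delta) = A + O_\delta(1)$.

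\textbf{Main obstacle.} The principal difficulty is that any $\psi$ with $\psi(t)\sim t|\log t|^{1/2}$ is necessarily concave, since $\psi(t)/t\to\infty$ as $t\to 0$ would otherwise contradict the monotonicity of $\psi(t)/t$ that convexity with $\psi(0)=0$ demands. Concavity of $\psi$ forces a positive contribution to $\arctan D^2\bar u$ in the $x_j$-directions, and as $x_j\to 0$ the sum of arctans approaches the allowed upper bound $(n-1)\pi/2$ with no cushion; one must use the block-diagonal structure in its sharpest form, with no ability to relax the orthogonality condition on $\tilde x_i$. A secondary technical point is tracking the induction constants: $\eta$ shrinks at each step by a factor involving $C_{n-1}^{-1}$ and $n-m-1$, while $A$ grows by $C'_{n-1}$ and the $\delta$-dependent Lipschitz constant of $u_m$, and one must verify both stay estimable in $n, g, \delta$ throughout the induction.
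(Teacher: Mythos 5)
Your overall architecture (build an upper barrier of the form $u_m(\tilde x) - \eta\sum\psi(x_j) + A\sum x_j$ with a concave $\psi\sim t|\log t|^{1/2}$, verify it is a supersolution via the $g$-orthogonal block structure, and conclude via the comparison principle) is the natural first attempt, and the supersolution computation is essentially fine. The fatal problem is the boundary verification along the artificial lateral boundary $\partial\Delta_m^\delta$, specifically near the corner where it meets $\Delta_m$. You dismiss this with ``choosing $A$ large relative to $\|u_m\|_{L^\infty(\Delta_m^\delta)}$ forces $\bar u\geq u_n$ there,'' but this only exploits the crude bound $u_n\leq 0$, which is useless near the corner where both $u_n$ and $\bar u$ tend to the common value $u_m|_{\partial\Delta_m^\delta}$. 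Near that corner, the sharpest available upper bound on $u_n$ prior to proving this Proposition is the Existence Theorem's two-sided modulus of continuity, $u_n \leq u_m + C\sum_{j>m}x_j|\log x_j|$ (or the $O(\sum x_j)$ bound from the linear-interpolation supersolution); either way we only get
\[
\bar u - u_n \;\geq\; -C(\delta)\sum_{j>m} x_j \;-\; \eta\sum_{j>m}\psi(x_j) \;+\; A\sum_{j>m}x_j ,
\]
and since $\psi(t)\sim t|\log t|^{1/2}$ is superlinear, no finite $A$ dominates $\eta\psi$ for small $x_j$. So $\bar u\geq u_n$ cannot be verified on $\partial\Delta_m^\delta$, and the comparison step breaks. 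You cannot repair this by knowing the Proposition on a smaller $\delta'$ or a larger domain $\Delta_m^{\delta/2}$ — that is circular. And damping $\psi$ with a smooth cutoff in the lateral variables kills the clean block structure: the cross-terms $\chi'\psi'$, $\chi''\psi$ generate off-diagonal entries in $D^2\bar u$ that mix the $\tilde x_i$ and $x_j$ directions, and the naive summation $\arctan D^2\bar u = \frac{m-1}{2}\pi+\sum\arctan(\cdot)$ is no longer valid; one would have to quantify precisely how much these cross-terms perturb the eigenvalues, which is exactly where the difficulty lives.

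This is the delicacy the paper's proof is built around. The paper fixes a \emph{single} target point $\xi$, uses a cutoff $\chi$ that vanishes near $\partial\Delta_n\setminus\mathrm{Int}(\Delta_m)$ (so the pure barrier is $\geq u_n$ on that part of the boundary regardless of $\Lambda$), uses $\xi$-dependent quadratic penalties $\psi(x_k-\xi_k)$ (not a universal concave $\psi$), includes the \emph{positive} linear term $\tfrac{\Lambda}{2}\sum x_k$, and then runs a sliding argument: as $\Lambda$ increases from $0$, the first touching of $v$ and $u_n$ must occur in the interior, and at that touching point one bounds $\arctan D^2 v$ from above using Lemma~\ref{arctanmatrix} and the precise structure of each term (including the messy $\chi$ derivatives!) to force $\Lambda_0 \gtrsim |\log\xi_n|^{1/2}$. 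No global domination $\bar u\geq u_n$ across a comparison domain is ever asserted — only first-touching, which is a maximum-principle argument with a built-in escape valve. Your proposal is missing this idea entirely, and the gap at $\partial\Delta_m^\delta$ is the concrete symptom; it is not a book-keeping issue but the main difficulty of the Proposition.

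As a secondary point, your displayed identity $\arctan D^2\bar u = \frac{m-1}{2}\pi + \sum_{j>m}\arctan(-\eta\psi''(x_j))$ is not exactly correct, since the $dx_j$ for $j>m$ need not be $g$-orthonormal amongst themselves even after the $\tilde x_i$ orthogonalization; this only affects the equality, not the inequality $\arctan D^2\bar u < \frac{n-1}{2}\pi$, so it is harmless, but worth noting. The induction on $n$ (with base $n=2$, $m=1$) is also not how the paper structures this Proposition — the paper's proof applies directly for all $n\geq 2$ using the already-established global $O(x\log x)$ modulus of continuity, without a separate $n$-induction — but that difference is not itself a gap.
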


\begin{proof}
Without loss we focus on any point $(\xi_1,\ldots \xi_m)\in \Delta_m^\delta$, and $0< \xi_{m+1}\leq \ldots \leq \xi_n\ll 1$. The goal is to estimate the value $u_n(\xi_1,\ldots \xi_n)$ from the above. We take a cutoff function $\chi(x_1,\ldots, x_n)$ supported around $(\xi_1,\ldots \xi_m,0,\ldots 0)$, which vanishes near $\partial \Delta_n\setminus \text{Int}(\Delta_m)$, so that
\[
0\leq \chi \leq 1,\quad \chi(\xi_1,\ldots \xi_n)=1,\quad |D\chi|+|D^2\chi|\leq C(\delta).
\]
We take another auxiliary one-variable function $\psi$, with the following properties:
\[
\begin{cases}
 \psi(x)=\frac{1}{\xi_n |\log \xi_n|}  x^2,\quad & x\leq 0,  
 \\
 \psi(x)>0,\quad & x>0,
\\
\psi''(x)\leq \frac{2}{\xi_n |\log \xi_n|},\quad & 0\leq x\leq \xi_n,
 \\
\psi''(x) = 0,\quad & x>\xi_n.
\end{cases}
\]
We will use the barrier function 
\[
\begin{split}
& v=\sum_{m+1}^n \psi(x_k-\xi_k) + 
(1- \frac{x_{m+1}+\ldots x_n}{\pi})u_m( \frac{x_1}{1- \frac{\sum_{m+1}^n x_k}{\pi} },\ldots \frac{x_m}{1- \frac{\sum_{m+1}^n x_k}{\pi} })
\\
& +\Lambda \chi(x_1,\ldots x_n) \sum_{m+1}^n \frac{x_k \log x_k}{  |\log \xi_n|}  +\frac{\Lambda}{2}\sum_{m+1}^n x_k,
\end{split}
\]
where  $\Lambda$ is a non-negative parameter to be determined, and we will keep track of dependence on $\Lambda$ in the estimates.

By the convexity of $u_n$, and the fact that $u_n=0$ at the vertices, 
\[
u_n(x_1,\ldots x_n)\leq (1- \frac{x_{m+1}+\ldots x_n}{\pi})u_m( \frac{x_1}{1- \frac{\sum_{m+1}^n x_k}{\pi} },\ldots \frac{x_m}{1- \frac{\sum_{m+1}^n x_k}{\pi} }),
\]
and the inequality is strict for $(x_{m+1},\ldots x_n)\in \text{Int}(\Delta_{n-m}) $.
For $\Lambda=0$, this implies $v> u_n$ on $\Delta_n$. We observe that at the point $(\xi_1,\ldots, \xi_n)$, the coefficient of the $\Lambda$ parameter is 
\[
\sum_{m+1}^n (\frac{ \xi_k \log \xi_k }{|\log \xi_n|} + \frac{1}{2} \xi_k) \leq - \frac{1}{2}\sum_{m+1}^n \xi_k<0.
\]
Thus as $\Lambda$ increases,  the value of $u_n-v$ at $(\xi_1, \ldots \xi_n)$ will eventually switch sign, so the graph of $u_n$ and $v$ must first touch at some parameter value $\Lambda_0$, where
\begin{equation}\label{Lambda0}
\begin{split}
 \frac{1}{2} \Lambda_0& \leq \xi_n^{-1} (   -u_n(\xi)+(1- \frac{\xi_{m+1}+\ldots \xi_n}{\pi})u_m( \frac{\xi_1}{1- \frac{\sum_{m+1}^n \xi_k}{\pi} },\ldots \frac{\xi_m}{1- \frac{\sum_{m+1}^n \xi_k}{\pi} }))
\\
& \leq \frac{-u_n(\xi_1,\ldots \xi_n)+ u_m(\xi_1,\ldots \xi_m)  } {\xi_n} +C(\delta)   . 
\end{split}
\end{equation}
Here the second inequality uses the interior Lipschitz bound on $u_m$.
If we know $\Lambda_0\geq |\log \xi_n|^{1/2}$, then
\[
\frac{-u_n(\xi_1,\ldots \xi_n)+ u_m(\xi_1,\ldots \xi_m)  } {\xi_n} \geq \frac{1}{2} |\log \xi_n|^{1/2}- C(\delta),
\]
which implies the conclusion of the Proposition. Thus without loss $\Lambda_0 \leq |\log \xi_n|^{1/2}$.

We will focus on the point $x=(x_1,\ldots x_n)$ where the two graphs touch. Clearly this point lies on the set $\{ \chi\sum_{m+1}^n \frac{x_k \log x_k}{  |\log \xi_n|}>0\}\subset \text{Int}(\Delta_n)$. Furthermore,
\begin{equation}\label{xk01}
\frac{\Lambda_0}{2}\sum_{m+1}^n x_k+ \sum_{m+1}^n\psi(x_k-\xi_k) 
+\Lambda_0 \chi(x_1,\ldots x_m) \sum_{m+1}^n \frac{x_k \log x_k}{  |\log \xi_n|}\leq 0.
\end{equation}
We write $x_{k_0}= \max_{m+1\leq k\leq n} x_k $, and deduce some constraints on $x_{k_0}$. If we ignore the non-negative $\psi$ terms, then
\[
\frac{1}{2} x_{k_0} \leq -\sum_{m+1}^n  \frac{x_k \log x_k}{  |\log \xi_n|} \leq (n-m) x_{k_0} |\log x_{k_0}| |\log \xi_n|^{-1} \ll 1 ,
\]
so $x_{k_0} \leq \xi_n^{\frac{1}{2(n-m)}}\ll 1$. If $x_n\leq x_{k_0}< \frac{\xi_n}{2}$, then (\ref{xk01}) implies
\[
\xi_n\leq 4\psi(x_n-\xi_n) |\log \xi_n|\leq C\Lambda_0 x_{k_0} |\log x_{k_0}|.
\]
Since we can without loss assume $\Lambda_0\leq |\log \xi_n|^{1/2}$, we obtain
\[
x_{k_0}\geq \frac{\xi_n}{ C\Lambda_0 |\log \xi_n|   } \geq \frac{\xi_n}{ C |\log \xi_n|^2   } .
\]
In summary,
\begin{equation}\label{xk0}
\frac{\xi_n}{ C |\log \xi_n|^2   }\leq x_{k_0} \leq \xi_n^{ \frac{1}{2(n-m)} }\ll 1.   
\end{equation}

%gives
%\begin{equation*}
% \sum_{m+1}^n\frac{|x_k-\xi_k|^2 }{\xi_n}\leq  C\Lambda_0 x_{k_0} |\log \frac{x_{k_0}}{2\pi}|.  %\quad K\leq C\Lambda_0 \max_{m+1\leq k\leq n} \frac{|\log \frac{x_{k_0}}{2\pi}|}{  |\log \xi_n|}.
%\end{equation*}
%This constrains the range of $x_{k_0}$. If $x_{k_0}>2\xi_n$, then
%\[
%\frac{x_{k_0}^2 }{\xi_n} \leq  C\sum_{m+1}^n\frac{|x_k-\xi_k|^2 }{\xi_n}\leq  C\Lambda_0 x_{k_0} |\log \frac{x_{k_0}}{2\pi}|\leq C\Lambda_0 x_{k_0} |\log \xi_n|,
%\]
%so $x_{k_0}\leq C\Lambda_0 \xi_n|\log \xi_n|$. If $x_{k_0}< \frac{\xi_n}{2}$, then
%\[
%\xi_n \leq  C\sum_{m+1}^n\frac{|x_k-\xi_k|^2 }{\xi_n}\leq  C\Lambda_0 x_{k_0} |\log x_{k_0} | .
%\]
%Since we can without loss assume $\Lambda_0\leq |\log \xi_n|$, we obtain
%\[
%x_{k_0}\geq \frac{\xi_n}{ C\Lambda_0 |\log \xi_n|   } \geq \frac{\xi_n}{ C |\log \xi_n|^2   } .
%\]
%In summary,
%\begin{equation}\label{xk0}
%\frac{\xi_n}{ C |\log \xi_n|^2   } \leq x_{k_0} \leq C(\Lambda_0+1)\xi_n |\log \xi_n|\ll 1.
%\end{equation}

%so $x_{k_0}\geq \frac{\xi_n}{ C(\Lambda_0+1) |\log \xi_n| }$. %In summary,
%\[
%\leq x_{k_0} \leq \min(2\xi_n, C\Lambda_0 \xi_n |\log \xi_n|).
%\]

Since $v\geq u_n$ and $v(x)=u_n(x)$ at the interior point $x$, the gradients of $v$ and $u_n$ agree at $x$, and $D^2 u_n(x)\leq D^2 v(x)$. In particular, $D^2v(x)\geq 0$ and
\begin{equation}\label{arctanlower}
\arctan D^2 v (x) \geq \frac{n-1}{2}\pi.
\end{equation}
We will now derive an upper bound on $\arctan D^2 v(x)$.

On the linear subspace $\text{span}\{ dx_{m+1},\ldots dx_n\}\subset T_x\R^n$, we find an eigenbasis $e_1,\ldots e_m$ for $D^2u_m$ with respect to the metric $g$, with eigenvalues $\lambda_1',\ldots \lambda_m'$. By the construction of $u_m$, we have $\sum_1^m \arctan \lambda_i'=\frac{m-1}{2}\pi$, and by the support property of $\chi$ and the interior smoothness of $u_m$, all the $\lambda_i'$ are bounded from above by constants  $C(\delta)$. The contribution of the $u_m$ term to $D^2 v(e_i, e_i)$ is bounded above by $(1+Cx_{k_0}) \lambda_i'$, and taking advantage of the fact that $D^2 v(e_i, e_i)$ only involve differentiation in the $x_1,\ldots x_m$ variables,
\[
\begin{split}
D^2 v(e_i, e_i) \leq & (1+C x_{k_0} ) \lambda_i'+ C\Lambda_0 |D^2\chi| \sum_{k=m+1}^n |x_k  \log x_k | |\log \xi_n|^{-1} 
\\
\leq & (1+C x_{k_0} ) \lambda_i'+ \frac{C(\delta)}{ |\log \xi_n|} \Lambda_0 x_{k_0} | \log x_{k_0}|
\\
\leq &  \lambda_i'+ C(\delta) (\Lambda_0+1) x_{k_0} .
\end{split}
\]
The last inequality here uses the lower bound part of (\ref{xk0}).
By the mean value inequality for $\arctan$,
\[
\arctan D^2 v(e_i, e_i) \leq  \arctan \lambda_i'+ C(\delta) (\Lambda_0+1) x_{k_0},
\]
so after summation,
\begin{equation}\label{arctan1tom}
  \sum_1^m \arctan D^2v(e_i, e_i) \leq  \frac{m-1}{2}\pi+ C(\delta) (\Lambda_0+1) x_{k_0}.
\end{equation}
We now pick $e_{m+1}$ to be the unit vector orthogonal to this $m$-dimensional subspace, satisfying 
\[
dx_i(e_{m+1})=0,\quad \forall i\in \{ m+1,\ldots n\}\setminus \{ k_0\},
\]
and we complete $e_1,\ldots e_{m+1}$ into an orthonormal basis $e_1,\ldots e_n$. We will estimate $D^2 v(e_{m+1}, e_{m+1})$. The $u_m$ term contribution is bounded by $C(\delta)$ by the support property of $\chi$. The $\psi$ term contribution is bounded by
$\frac{C}{\xi_n |\log \xi_n|}$ if $x_{k_0}\leq 2\xi_n$, and vanishes otherwise. Thus for  $x_{k_0}\leq 2\xi_n$,
\[
\begin{split}
& D^2v(e_{m+1}, e_{m+1}) \leq \frac{C}{\xi_n |\log \xi_n|} + C(\delta) + \frac{ C\Lambda_0 }{ |\log \xi_n| x_{k_0}} + \frac{ C\Lambda_0 |D\chi| |\log x_{k_0} |}{ |\log \xi_n| }
\\
& 
\leq   C(\delta) (\frac{ \Lambda_0+1 }{ |\log \xi_n| x_{k_0} }+1).
\end{split}
\]
The same estimate holds for $x_{k_0}>2\xi_n$, where the $\frac{C}{\xi_n |\log \xi_n|}$ term does not appear. Hence in both cases,
\[
\arctan D^2v(e_{m+1}, e_{m+1}) \leq \frac{\pi}{2}- C(\delta)^{-1} \min( 1, \frac{ |\log \xi_n| x_{k_0} }{ \Lambda_0+1 }).
\]
Combined with Lemma \ref{arctanmatrix} and (\ref{arctan1tom}), we obtain the upper bound
\[
\begin{split}
& \arctan D^2 v(x) \leq \sum_1^n \arctan D^2v(e_i, e_i) 
\\
& \leq \frac{n-1}{2}\pi +C(\delta) (\Lambda_0+1) x_{k_0}- C(\delta)^{-1} \min( 1, \frac{ |\log \xi_n| x_{k_0} }{ \Lambda_0+1 }).
\end{split}
\]
Comparing this with the lower bound (\ref{arctanlower}), 
\begin{equation}\label{arctanupper}
C(\delta) (\Lambda_0+1) x_{k_0}- \min( 1, \frac{ |\log \xi_n| x_{k_0} }{ \Lambda_0+1 })\geq 0.
\end{equation}
Recall that without loss $\Lambda_0\leq |\log \xi_n|^{1/2}$, so by (\ref{xk0}), 
\[
C(\delta) (\Lambda_0+1) x_{k_0} \leq C(\delta) |\log \xi_n|^{1/2} \xi_n^{\frac{1}{2(n-m)}} \ll 1,
\]
whenever $\xi_n$ is small enough depending on $C(\delta)$. Thus (\ref{arctanupper}) reduces to
\[
C(\delta) (\Lambda_0+1) x_{k_0}- \frac{ |\log \xi_n| x_{k_0} }{ \Lambda_0+1 }\geq 0,
\]
hence 
\[
\Lambda_0+1 \geq C(\delta)^{-1} |\log \xi_n|^{1/2} \gg 1,
\]
which implies the Prop. using (\ref{Lambda0}).
\end{proof}

Prop. \ref{logdivergencebarrier} has the important consequence on the divergence of the gradient. %This behaviour is opposite to what happens to the special Lagrangian graph equation in the smooth and strictly convex domain case. 

\begin{cor}\label{gradientdivergence}
(Gradient divergence) Let $n\geq 2$ and $1\leq m\leq n-1$.
In the neighbourhood of the interior of the $\Delta_m$ face,
\[
(x_1,\ldots x_m)\in \Delta_m^\delta,\quad x_{m+1},\ldots x_n\ll 1,
\]
then
\[
\partial_i u_n(x_1,\ldots x_n) \leq -C(\delta)^{-1} |\log  \max_{m+1}^n x_k|^{1/2} +C(\delta)',\quad i=m+1,\ldots n.
\]
 In particular, the gradient $du_n(x)$ \emph{diverges uniformly to infinity} as $x$ tends to $\partial \Delta_n$, bounded away from the vertices.

\end{cor}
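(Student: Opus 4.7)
The plan is to derive the gradient bound by returning to the touching-point argument underlying Proposition \ref{logdivergencebarrier}, and extracting the gradient of the explicit barrier at the touching point. The key observation is that at the critical parameter $\Lambda_0$, the barrier $v$ coincides with $u_n$ along with its gradient at a touching point, so the explicit form of $v$ directly yields a pointwise estimate on $Du_n$.

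Concretely, fix a target point $\xi = (\xi_1, \ldots, \xi_n)$ satisfying $(\xi_1, \ldots, \xi_m) \in \Delta_m^\delta$ and $\xi_{m+1}, \ldots, \xi_n \ll 1$, and run the barrier $v = v_\xi$ constructed in the proof of Proposition \ref{logdivergencebarrier} with the cutoff $\chi$ centered near $\xi$. At the critical value $\Lambda_0 \gtrsim C(\delta)^{-1}|\log \xi_n|^{1/2}$ (as established in that Proposition), one has $v \geq u_n$ on $\Delta_n$ with equality at some first touching point $x^{**}$ in the support of $\chi$, so that $Du_n(x^{**}) = Dv(x^{**})$.

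Computing $\partial_i v$ explicitly for $i \in \{m+1, \ldots, n\}$, in the region where $\chi \equiv 1$ and $x_i^{**}$ is comparable to $\xi_n$, the dominant terms are
\[
\Lambda_0\,\chi(x^{**})\,\frac{\log x_i^{**} + 1}{|\log \xi_n|} + \frac{\Lambda_0}{2} \approx -\Lambda_0 + \frac{\Lambda_0}{2} = -\frac{\Lambda_0}{2},
\]
since $\log x_i^{**} \approx -|\log \xi_n|$ in this regime; the remaining contributions from $\psi'$, from $\partial_i \tilde{u}_m$, and from the $\partial_i \chi$ term are all absorbed into the additive $C(\delta)$ error. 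This yields $\partial_i u_n(x^{**}) = \partial_i v(x^{**}) \leq -\tfrac{1}{2} C(\delta)^{-1} |\log \xi_n|^{1/2} + C(\delta)$, matching the stated bound at the touching point.

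To transfer the estimate from $x^{**}$ to the original target $\xi$, invoke the convexity of $u_n$: since $\partial_i u_n$ is monotone non-decreasing in $x_i$ with the other coordinates fixed, arranging the cutoff so that the first touching occurs at or past $\xi$ in the $e_i$-direction gives $\partial_i u_n(\xi) \leq \partial_i u_n(x^{**})$, inheriting the bound. The main obstacle is to control the location of the touching point $x^{**}$ relative to $\xi$ in every direction $i \in \{m+1, \ldots, n\}$ simultaneously; this is expected to require running the barrier argument separately for each coordinate direction, with the cutoff $\chi$ tailored so that the touching in the $e_i$-direction cannot occur too far from $\xi$. Finally, the qualitative statement that $|du_n(x)| \to \infty$ as $x$ tends to $\partial \Delta_n$ away from the vertices is an immediate consequence of the quantitative bound, since $|\log \max_{k > m} x_k|^{1/2} \to +\infty$ whenever any of the small coordinates $x_k$ approaches $0$.
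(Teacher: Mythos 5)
Your high-level idea---extracting gradient information from the touching point of the barrier $v$ against $u_n$---is a reasonable alternative to what the paper actually does, but there is a genuine gap in the transfer step that you yourself flag and do not resolve. At the critical parameter $\Lambda_0$, you do indeed get $Du_n(x^{**}) = Dv(x^{**})$ at the touching point, and the explicit form of $v$ makes $\partial_i v(x^{**})$ roughly $-\Lambda_0/2$ plus controlled errors (granting that $\chi(x^{**})$ is not small and the barrier's non-$\Lambda$ terms are bounded). However, the proof of Proposition \ref{logdivergencebarrier} only locates $x^{**}$ crudely: the bound (\ref{xk0}) constrains $x_{k_0}^{**} = \max_{k>m} x_k^{**}$ to a wide window $[\xi_n/(C|\log\xi_n|^2), \xi_n^{1/(2(n-m))}]$, says nothing about the other small coordinates $x_i^{**}$ (which could be much smaller than $\xi_n$, making $\log x_i^{**}/|\log\xi_n|$ very far from $-1$), and says nothing directional---the $\psi$ penalty is asymmetric and allows $x_k^{**} > \xi_k + \xi_n$ freely, while the quadratic penalty for $x_k^{**} < \xi_k$ is not strong enough to rule that regime out. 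Monotonicity of $\partial_i u_n$ is coordinatewise (along a single $e_i$-axis with the other coordinates fixed), so without knowing that $x^{**}$ dominates $\xi$ in every coordinate $i>m$ with the others matching, you cannot conclude $\partial_i u_n(\xi) \leq \partial_i u_n(x^{**})$. Your suggestion to run the argument separately per direction with a tailored cutoff is a hope, not a construction, and it is not at all clear how to prevent the touching from wandering.

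The paper sidesteps this difficulty entirely. Rather than reach inside the touching-point argument, it uses Proposition \ref{logdivergencebarrier} only as a black-box bound on the \emph{value} of $u_n$ at a point shifted by $Kx_n|\log x_n|^{1/2}$ in the $e_i$-direction, combines this with the $O(x\log x)$ boundary modulus (\ref{boundedmodulus2}) to control $u_n$ at $x$ itself, and then applies the convexity inequality
\[
\partial_i u_n(x)\cdot K x_n |\log x_n|^{1/2}\le u_n\bigl(x+ Kx_n|\log x_n|^{1/2}\,e_i\bigr)-u_n(x)
\]
directly at the target point $x$. This gives the desired lower bound on $-\partial_i u_n(x)$ after dividing by the step length, with no need to track any touching point. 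If you want to salvage your version, you would need to redesign the barrier so that the touching is pinned in every small-coordinate direction---effectively re-deriving something like a difference quotient anyway.
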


\begin{proof}
Without loss $x_{m+1}\leq \ldots \leq x_n$. 
Let $K\gg 1$ be a parameter much larger than the $C(\delta)$ from Prop. \ref{logdivergencebarrier}, and assume that $x_n|\log x_n|\ll K^{-1}$. 
Using Prop. \ref{logdivergencebarrier}, 
\[
\begin{split}
& u_n(x_1,\ldots x_m, x_{m+1} ,\ldots x_i+ Kx_n |\log x_n|^{1/2},x_{i+1}, \ldots x_n) - u_m(x_1,\ldots x_m)
\\
\leq & -C(\delta)^{-1} Kx_n |\log x_n| +C'(\delta) Kx_n |\log x_n|^{1/2},
\end{split}
\]
Recall from (\ref{boundedmodulus2}) that
\[
|u_n(x_1,\ldots x_n)-u_m(x_1,\ldots x_m)|\leq C\sum_{m+1}^n x_k|\log x_k|\leq  Cx_n |\log x_n|.
\]
Since $K\gg C(\delta)$, we can absorb the $Cx_n|\log x_n|$ term, so
\[
\begin{split}
& \partial_i u_n(x_1,\ldots x_n) Kx_n|\log x_n|^{1/2}
\\
  \leq & u_n(x_1,\ldots x_m, x_{m+1} ,\ldots x_i+ Kx_n |\log x_n|^{1/2},x_{i+1}, \ldots x_n)- u_n(x_1,\ldots x_n) 
\\
 \leq & - \frac{1}{2} KC(\delta)^{-1} x_n |\log x_n| +C'(\delta) Kx_n |\log x_n|^{1/2}.
\end{split}
\]
This implies the gradient divergence bound with modified constants.
\end{proof}

\begin{cor}\label{nosubgradient}
Let $n\geq 2$.
The convex function $u_n$ admits no subgradient at any point $x\in \partial \Delta_n$ except for the vertices.
\end{cor}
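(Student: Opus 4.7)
The plan is a short contradiction argument using the refined upper bound of Prop.~\ref{logdivergencebarrier}. Let $x_0\in\partial\Delta_n$ be a non-vertex point. Then $x_0$ lies in the relative interior of some face of $\partial\Delta_n$ of dimension $m$ with $1\le m\le n-1$; since the analogous estimates hold symmetrically near each of the $n+1$ codimension-one faces of $\Delta_n$, I would without loss of generality take this face to be $\Delta_m=\{x_{m+1}=\cdots=x_n=0\}\cap\Delta_n$, and fix $\delta>0$ with $(x_0^1,\ldots,x_0^m)\in\Delta_m^\delta$. Note that the boundary modulus of continuity bound (\ref{boundedmodulus2}) forces $u_n(x_0)=u_m(x_0^1,\ldots,x_0^m)$.

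Suppose for contradiction that $p\in\R^n$ is a subgradient of $u_n$ at $x_0$, meaning $u_n(x)\geq u_n(x_0)+p\cdot(x-x_0)$ for every $x\in\Delta_n$. For any $k>m$ and any sufficiently small $t>0$, the probe point $y_t:=x_0+te_k$ lies in the interior of $\Delta_n$ (by the relative interior assumption on $x_0$). The subgradient inequality gives
\[ u_n(y_t)-u_n(x_0)\geq tp_k. \]
On the other hand, Prop.~\ref{logdivergencebarrier} applied at $y_t$, whose only nonvanishing coordinate beyond index $m$ is $t$ in slot $k$, yields
\[ u_n(y_t)-u_m(x_0^1,\ldots,x_0^m)\leq -C(\delta)^{-1}t|\log t|^{1/2}+C(\delta)'\,t. \]
Combining the two displays and dividing by $t$,
\[ p_k\leq -C(\delta)^{-1}|\log t|^{1/2}+C(\delta)'. \]
Letting $t\to 0^+$ forces the right-hand side to diverge to $-\infty$, an impossibility.

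Since the hard analytic work is already accomplished in the barrier construction for Prop.~\ref{logdivergencebarrier}, there is no further obstacle. The minor points worth checking are that $y_t$ truly enters $\Delta_n$ for small $t$ (immediate from the description of $\Delta_m$ and the relative interior assumption on $x_0$) and that the symmetry reduction to the standard face $\{x_{m+1}=\cdots=x_n=0\}$ is valid (which is automatic since Prop.~\ref{logdivergencebarrier} is stated to hold near each codimension-one face of $\Delta_n$).
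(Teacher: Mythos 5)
Your argument is correct and takes essentially the paper's route: the corollary is stated there as an immediate consequence of the gradient divergence Cor.~\ref{gradientdivergence}, which rests on exactly the application of Prop.~\ref{logdivergencebarrier} that you carry out directly. One small inaccuracy: for $n>m+1$ the probe point $y_t=x_0+te_k$ is \emph{not} interior to $\Delta_n$ (the coordinates $x_j$ for $j>m$, $j\neq k$ remain zero), but this is harmless because the subgradient inequality holds on all of $\Delta_n$ and the estimate of Prop.~\ref{logdivergencebarrier} extends to such points by continuity of $u_n$ up to the boundary — or simply replace the probe by $x_0+t\sum_{j>m}e_j$, which genuinely lies in $\mathrm{Int}(\Delta_n)$ for small $t$.
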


%\begin{thm}\label{gradientdivergence}
%Let $n\geq 1$. 
%The gradient $du_n(x)$ diverges to infinity, for any sequence of $x\in \text{Int}(\Delta_n)$ tending to any point on $\partial \Delta_n$ except for the vertices.
%\end{thm}

%\begin{proof}
%Suppose there is some sequence converging to a point $(x_1,\ldots x_k)$ in the interior of some boundary face $\Delta_m$, such that $|du_n(x)|$ remains bounded. Then there exists some subgradient $p$ of the convex function $u_n$ at the limiting point, so in particular for any $x_{m+1},\ldots x_n\ll 1$,
%\[
%u_n(x_1,\ldots x_n) -u_m(x_1,\ldots x_m) \geq \sum_{m+1}^n p_k x_k.
%\]
%This is a contradiction to Prop. \ref{logdivergencebarrier}.
%\end{proof}

\subsection{Gradient bounds}

\begin{lem}\label{gradientatworstlog}
Near the boundary \emph{the gradient can diverge at most logarithmically}: in the region
\[
(x_1, \ldots x_m)\in \Delta_m^\delta,\quad 0< x_{m+1},\ldots ,x_n\ll \delta, 
\]
we have 
\[
|\partial_i u_n|\leq C(\delta), \quad i=1,2,\ldots m,\quad  C\log x_i\leq \partial_i u \leq C(\delta),\quad i=m+1,\ldots n.
\]
\end{lem}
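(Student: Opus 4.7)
My plan is to combine the convexity of $u_n$ (expressed via supporting hyperplane inequalities) with the $C^0$ bound and the boundary modulus of continuity already established in Theorem \ref{Existence}; no new barrier construction will be needed for this lemma. As a preliminary, I observe that throughout the region under consideration the solution is uniformly bounded: the multi-face modulus (\ref{boundedmodulus2}) together with the $C^0$-boundedness of $u_m$ on $\Delta_m^\delta$, and the fact that $x_k|\log x_k|\to 0$ as $x_k\to 0^+$, gives $-C(\delta)\leq u_n\leq 0$.

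For a tangential direction $i\in\{1,\ldots,m\}$, the $\Delta_m^\delta$ hypothesis permits a fixed step of size $t\sim \delta$ in $\pm e_i$ without leaving the bounded-$u_n$ region. The two-sided convexity inequalities
\[
\frac{u_n(x)-u_n(x-te_i)}{t}\leq \partial_i u_n(x)\leq \frac{u_n(x+te_i)-u_n(x)}{t}
\]
then immediately yield $|\partial_i u_n|\leq C(\delta)/t=C'(\delta)$. The upper bound $\partial_i u_n\leq C(\delta)$ for a normal direction $i\in\{m+1,\ldots,n\}$ follows from exactly the same right-hand inequality: a fixed step $t\sim \delta$ in the $+e_i$ direction stays inside $\Delta_n$ by virtue of $x_{m+1},\ldots,x_n\ll \delta$ and $(x_1,\ldots,x_m)\in \Delta_m^\delta$, so the target point again lies in the uniformly bounded region.

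The logarithmic lower bound $\partial_i u_n\geq C\log x_i$ for $i\geq m+1$ is the main point I expect to require care. I will apply the supporting hyperplane inequality of the convex function $u_n$ with the boundary point $y=x|_{x_i=0}$, so that $y-x=-x_ie_i$, yielding
\[
\partial_i u_n(x)\geq \frac{u_n(x)-u_n(x|_{x_i=0})}{x_i},
\]
and then invoke the single-face boundary modulus of continuity (\ref{boundarymodulus}) on the codimension-one face $\{x_i=0\}$, which gives $|u_n(x)-u_n(x|_{x_i=0})|\leq -Cx_i\log(x_i/2\pi)$. Dividing by $x_i$ then produces $\partial_i u_n(x)\geq C\log(x_i/2\pi)\geq C'\log x_i$, the final inequality absorbing the harmless additive constant $-C\log(2\pi)$ because $|\log x_i|\gtrsim 1$ for $x_i\ll \delta$.

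The one subtlety I want to flag is the choice of face in the last step: it is essential to compare $u_n(x)$ with its value on the single face $\{x_i=0\}$ via (\ref{boundarymodulus}) rather than to use the multi-face refinement (\ref{boundedmodulus2}), since the latter would introduce cross terms $\sum_{k\neq i} x_k|\log x_k|$ in the numerator and these, when divided by $x_i$, would blow up unless one knew a priori that $x_i$ dominates the other small coordinates. Since the Dirichlet data on $\{x_i=0\}$ is itself a lower-dimensional special Lagrangian potential $u_{n-1}$ living on an $(n-1)$-dimensional simplex, the face-by-face statement (\ref{boundarymodulus}) applies symmetrically to any codimension-one face and therefore provides exactly the logarithmic rate matching the lemma.
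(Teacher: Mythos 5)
Your proposal is correct and follows essentially the same route as the paper: boundedness of $u_n$ plus convex difference quotients over a step of size $\sim\delta$ for the tangential derivatives and the upper bounds, and the supporting-hyperplane inequality against the single codimension-one face $\{x_i=0\}$ combined with the boundary modulus of continuity (\ref{boundarymodulus}) for the logarithmic lower bound. The subtlety you flag about using the single-face estimate rather than (\ref{boundedmodulus2}) is exactly what the paper's one-line computation implicitly does.
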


\begin{proof}
By the boundedness and convexity of $u_n$, we easily see that
\[
-C(\delta)\leq \partial_iu \leq C(\delta),\quad i=1,\ldots m.
\]
Now we apply the boundary modulus of continuity (\cf Theorem \ref{Existence}), together with convexity, to deduce
\[
C(\delta)\geq \partial_n u\geq \frac{u_n (x_1, x_2,\ldots x_n )- u_n(x_1, x_2,\ldots x_{n-1}, 0) }{x_n} \geq C \log x_n. 
\]
Similarly for $i=m+1,\ldots n$.
\end{proof}

This gradient bound can be improved in a way which sheds some light on the inductive structure in $n$: when we are very close to a lower dimensional face, then the directional derivatives tangential to the face would be close to the gradient in the lower dimensional problem.

\begin{lem}\label{tangentialgradient1}
In the region with
\[
(x_1,\ldots x_m)\in \Delta_m^\delta,\quad 0<x_{m+1},\ldots x_n\ll \delta,
\]
we have for $i\leq m$,
\[
|\partial_i u_n(x_1,\ldots x_n)- \partial_i u_m(x_1,\ldots x_m)| \leq C(\delta) (\max_{k\geq m+1} x_k|\log x_k| )^{1/2}.
\]
Here $C(\delta)$ denotes some constant which depends only on $n, g, \delta$.
\end{lem}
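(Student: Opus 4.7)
The plan is a classical convexity sandwich combined with the boundary modulus of continuity (\ref{boundedmodulus2}) and the interior smoothness of $u_m$. Abbreviate $x' := (x_1,\ldots,x_m)$, $x := (x_1,\ldots,x_n)$, and set $\epsilon := \max_{k \geq m+1} x_k|\log x_k| \ll 1$. The goal is to show $|\partial_i u_n(x) - \partial_i u_m(x')| \leq C(\delta)\sqrt{\epsilon}$ for $i \leq m$.

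Pick a step size $0 < h \ll \delta$ to be optimized later. Since $x' \in \Delta_m^\delta$, both $x' \pm he_i$ remain in $\Delta_m^{\delta/2}$, and $x \pm he_i$ stay in $\Delta_n$ because $x_{m+1},\ldots,x_n$ are small. I would decompose
\[
u_n(x + he_i) - u_n(x) = \bigl[u_n(x+he_i) - u_m(x'+he_i)\bigr] + \bigl[u_m(x'+he_i) - u_m(x')\bigr] + \bigl[u_m(x') - u_n(x)\bigr].
\]
By (\ref{boundedmodulus2}) the two outer brackets are each $O(\epsilon)$, and by the interior $C^2$ estimate on $u_m$ (available from the induction hypothesis together with the Wang--Yuan regularity) the middle bracket equals $h\,\partial_i u_m(x') + O(h^2)$.

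Convexity of $u_n$ restricted to the line through $x$ in the $x_i$-direction then forces
\[
\partial_i u_n(x) \leq \frac{u_n(x+he_i)-u_n(x)}{h} = \partial_i u_m(x') + O(h) + O(\epsilon/h),
\]
and the symmetric perturbation $x - he_i$ supplies the matching lower bound. Optimizing $h = \sqrt{\epsilon}$ balances the two error terms and delivers the claimed $C(\delta)\sqrt{\epsilon}$ estimate.

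I do not anticipate a serious obstacle here: the three ingredients, namely the $O(x\log x)$ modulus of continuity from Theorem \ref{Existence}, interior smoothness of $u_m$ from the induction hypothesis, and convexity of $u_n$, are all in hand, and the domain admissibility of the perturbation is automatic since $h = \sqrt{\epsilon} \ll \delta$. The $\sqrt{\epsilon}$ loss is the familiar price of upgrading a $C^0$ comparison to a $C^1$ comparison via convexity, and it appears sharp in this class of arguments.
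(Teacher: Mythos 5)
Your proof is correct, and it rests on the same three pillars as the paper's own argument: the $O(x\log x)$ boundary modulus of continuity (\ref{boundedmodulus2}), the interior $C^2$ bound on $u_m$ from the induction hypothesis, and the convexity of $u_n$, balanced by the same optimized step size $h\sim\sqrt{\epsilon}$. The decomposition differs in a small but real way. The paper tests the subgradient inequality of $u_n$ at $x$ against points $x'$ lying \emph{on the face} $\Delta_m$ (so it also moves the normal coordinates $x_{m+1},\ldots,x_n$ down to zero), which forces it to control the resulting terms $\sum_{m+1}^n y_k x_k$ via the one-sided gradient bound of Lemma \ref{gradientatworstlog}; it then contrasts the lower bound so obtained for $u_m(x')-u_m(x_1,\ldots,x_m)$ with the quadratic Taylor upper bound and optimizes over the tangential displacement. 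You instead freeze $x_{m+1},\ldots,x_n$ and take one-dimensional difference quotients in the tangential direction $e_i$, applying (\ref{boundedmodulus2}) at both endpoints of the step; this bypasses Lemma \ref{gradientatworstlog} entirely and is, if anything, marginally cleaner. Both are the standard ``$C^0$ closeness upgraded to $C^1$ closeness with a square-root loss'' convexity argument, and your admissibility checks ($x'\pm he_i\in\Delta_m^{\delta/2}$, $h\ll\delta$, with the regime $\sqrt{\epsilon}\gtrsim\delta$ absorbed into $C(\delta)$ via the trivial bound of Lemma \ref{gradientatworstlog}) are the right ones.
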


\begin{proof}
Without loss $x_n=\max_{k\geq m+1} x_k\ll 1$.
Denote $y_i= \partial_i u_n(x)$ at the given point $x\in \text{Int}(\Delta_n)$. By convexity,
\[
u_n(x')- u_n(x) \geq \sum_1^n y_i (x_i'-x_i). 
\]
We consider $x'\in \Delta_m$, so that $x_{m+1}'=\ldots= x_n'=0$. Then by Lemma \ref{gradientatworstlog},
\[
u_n(x')- u_n(x) \geq \sum_1^m y_i (x_i'-x_i)- \sum_{m+1}^n y_ix_i\geq \sum_1^m y_i (x_i'-x_i) -C(\delta)\sum_{m+1}^n x_i .
\]
By Theorem \ref{Existence},
\[
|u_n(x)- u_m(x_1,\ldots x_m)| \leq -C \sum_{m+1}^n x_i\log \frac{x_i}{2\pi}\leq Cx_n|\log x_n|.
\]
By the boundary prescription $u_n(x')=u_m(x')$, so combining the above,
\begin{equation*}
u_m(x')- u_m(x_1,\ldots x_m) \geq \sum_1^m y_i (x_i'-x_i)+ C(\delta)x_n \log x_n.
\end{equation*}
By the interior smoothness of $u_m$ (\cf Theorem \ref{Existence}), we can find some ball in $\Delta_m$ of radius comparable to $\delta$, such that the Hessian of $u_m$ has an upper bound $C(\delta)$.
This gives
\[
u_m(x')- u_m(x_1,\ldots x_m) \leq \sum_1^m \partial_i u_m(x_1,\ldots x_m) (x_i'-x_i) + C(\delta) \sum_1^m |x_i-x_i'|^2,
\]
for all $x'$ within the ball. Contrasting the two bounds gives 
\[
\sum_1^m (y_i-\partial_i u_m(x_1,\ldots x_m) )(x_i'-x_i)\leq C(\delta) \sum_1^m |x_i-x_i'|^2+ C(\delta)x_n|\log x_n|,
\]
which shows the result.
\end{proof}

\begin{cor}\label{tangentialgradientconvergence}
Let $x'$ be an interior point of $\Delta_m\subset \partial \Delta_n$, then for any sequence of $x\in \text{Int}(\Delta_n)$ converging to $x'$, the tangential gradient also converges:
\[
\partial_i u_n(x)\to \partial_i u_m(x'),\quad i=1,\ldots m.
\]
\end{cor}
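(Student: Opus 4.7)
The strategy is to combine Lemma \ref{tangentialgradient1} with the interior smoothness of $u_m$. First, since $x'$ lies in the interior of $\Delta_m$, I fix $\delta>0$ small enough that $x'\in \Delta_m^{2\delta}$ (using the notation from (\ref{Deltamdelta})). Given any sequence of interior points $x^{(j)}=(x_1^{(j)},\ldots,x_n^{(j)})\in \text{Int}(\Delta_n)$ converging to $x'=(x_1',\ldots,x_m',0,\ldots,0)$, the transverse components $x_{m+1}^{(j)},\ldots,x_n^{(j)}$ tend to $0$, while $(x_1^{(j)},\ldots,x_m^{(j)})\to (x_1',\ldots,x_m')$. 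For $j$ sufficiently large the tangential projection lies in $\Delta_m^{\delta}$ and the transverse coordinates are as small as needed, so $x^{(j)}$ falls in the regime of Lemma \ref{tangentialgradient1}.

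Applying that lemma gives
\[
|\partial_i u_n(x^{(j)}) - \partial_i u_m(x_1^{(j)},\ldots,x_m^{(j)})| \leq C(\delta) \bigl( \max_{k\geq m+1} x_k^{(j)} |\log x_k^{(j)}|\bigr)^{1/2}
\]
for each $i=1,\ldots,m$. Since $s|\log s|\to 0$ as $s\to 0^+$, the right hand side tends to zero as $j\to \infty$.

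Finally, by Theorem \ref{Existence}, $u_m$ is smooth in the interior of $\Delta_m$, so its partial derivatives are continuous at the interior point $(x_1',\ldots,x_m')$, giving $\partial_i u_m(x_1^{(j)},\ldots,x_m^{(j)}) \to \partial_i u_m(x_1',\ldots,x_m')$. Combining the two convergences via the triangle inequality yields $\partial_i u_n(x^{(j)}) \to \partial_i u_m(x')$ for $i=1,\ldots,m$, as claimed. No step here presents a genuine obstacle; the work has already been done inside Lemma \ref{tangentialgradient1}, and the corollary is simply the pointwise limit statement extracted from that quantitative estimate.
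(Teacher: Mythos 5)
Your proof is correct and is exactly the argument the paper intends: the corollary is stated as an immediate consequence of Lemma \ref{tangentialgradient1}, combining the quantitative bound (whose right-hand side vanishes as the transverse coordinates tend to zero) with the continuity of $\partial_i u_m$ at interior points of $\Delta_m$. No issues.
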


We now turn the attention to the vertices.

\begin{lem}\label{sandwich}
The set of subgradients at the origin
\[
Du_n(0)=\{ y\in \R^n: u(x)\geq u(0)+ \langle x,y\rangle,\forall x\in \Delta_n  \}
\]
is sandwiched between translated copies of the negative quadrant:
\[
-C(n)(1,\ldots ,1) - (\R_{\geq 0}^n) \subset Du_n(0)\subset - (\R_{\geq 0}^n).
\]
Here $C(n)$ is the large constant from Prop. \ref{vertexcontinuity}. In particular $Du_n(0)$ is a closed convex set with nonempty interior.

Similarly, the set of subgradients $Du_n((0,\ldots, \text{k-th entry } \pi, \ldots 0)$   is sandwiched  between two polyhedral cones in $\R^n$:
\[
\{y_k \geq \max_i(0, y_i) \} +C(n) (0, \ldots 1,\ldots 0)\subset Du_n((\ldots, \pi, \ldots)) \subset \{ y_k \geq \max_i(0, y_i) \}.
\]
\end{lem}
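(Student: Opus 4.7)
The plan is to handle the two types of vertices separately, reducing the case of $V_k = \pi e_k$ to the origin via an affine symmetry of $\Delta_n$. I will use throughout that $u_n$ is convex with $u_n \leq 0$ on $\Delta_n$ and $u_n = 0$ at each of the $n+1$ vertices (from the inductive boundary prescription combined with convexity), together with the global lower bound $u_n(x) \geq -C(n)\sum_i x_i$ of Proposition \ref{vertexcontinuity}.

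For the origin, the upper containment is immediate: if $y \in Du_n(0)$ then $u_n(0) = 0$ reduces the defining inequality to $u_n(x) \geq \langle y, x\rangle$ on $\Delta_n$, so combining with $u_n \leq 0$ and testing against $x = te_i$ for small $t > 0$ gives $y_i \leq 0$. Conversely, for $y = -C(n)(1,\ldots,1) - v$ with $v \in \R_{\geq 0}^n$, the non-negativity of $x_i, v_i$ on $\Delta_n$ together with Proposition \ref{vertexcontinuity} yields
\[
\langle y, x\rangle = -C(n)\textstyle\sum_i x_i - \textstyle\sum_i v_i x_i \leq -C(n)\textstyle\sum_i x_i \leq u_n(x),
\]
so $y \in Du_n(0)$. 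Closedness and convexity are automatic from the description of $Du_n(0)$ as an intersection of closed half-spaces, and the lower containment exhibits an open set inside.

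For $V_k$, I would first observe that $\{y_k \geq \max_i(0, y_i)\}$ is precisely the outward normal cone $N_{\Delta_n}(V_k)$, which one verifies by computing the tangent cone at $V_k$ from the edges $V_0 - V_k = -\pi e_k$ and $V_i - V_k = \pi(e_i - e_k)$ for $i \neq 0, k$. The upper containment then follows by the same argument as at the origin: $u_n \leq 0$ together with $u_n(V_k) = 0$ forces $\langle y, x - V_k\rangle \leq 0$ on $\Delta_n$ for every $y \in Du_n(V_k)$, which is the defining property of the normal cone.

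The main step is the lower containment at $V_k$, which I would reduce to Proposition \ref{vertexcontinuity} by exploiting an affine symmetry of $\Delta_n$. Setting $\xi_0 := \pi - \sum_i x_i$ and $\xi_j := x_j$ for $j \neq 0, k$ gives an invertible linear change of coordinates that sends $V_k$ to the origin and maps $\Delta_n$ bijectively onto another standard simplex with the same combinatorial pattern of vertices and faces. The pulled-back function $v(\xi) := u_n(x(\xi))$ solves the special Lagrangian graph equation in the same phase $\frac{n-1}{2}\pi$ with respect to the pushed-forward positive-definite metric $\tilde g$, since the equation $\sum \arctan \lambda_i = \frac{n-1}{2}\pi$ is coordinate-independent; and inductively the pulled-back boundary values on each face of the $\xi$-simplex coincide with the $(n-1)$-dim Dirichlet solutions for the restricted $\tilde g$, so by uniqueness $v$ is the Dirichlet solution for $\tilde g$. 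Applying Proposition \ref{vertexcontinuity} to $v$ then gives $u_n(x) \geq -C(n,g)\sum_{i \neq k}\xi_i = -C(n,g)(\pi - x_k)$ on all of $\Delta_n$. For $y = y' + C(n,g) e_k$ with $y' \in N_{\Delta_n}(V_k)$, the normal cone property gives $\langle y', x - V_k\rangle \leq 0$, hence
\[
\langle y, x - V_k\rangle \leq C(n,g)(x_k - \pi) = -C(n,g)(\pi - x_k) \leq u_n(x),
\]
as required. The one technical point is the identification of the pulled-back Dirichlet problem; if one prefers to avoid the bookkeeping with the inductive boundary data, one can instead re-run the barrier construction of Proposition \ref{vertexcontinuity} directly in the local $\xi$-coordinates at $V_k$, since those barriers only use the simplex structure and not the specific lower-dimensional boundary values.
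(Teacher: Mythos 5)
Your proof is correct and for the origin it is essentially the paper's argument: the upper inclusion from $u_n\leq 0$ (the paper instead evaluates at the vertices $\pi e_i$, where $u_n=0$), and the lower inclusion from Prop.~\ref{vertexcontinuity} plus stability of $Du_n(0)$ under translation by $-\R_{\geq 0}^n$ since $x_i\geq 0$ on $\Delta_n$. The paper dispatches the vertices $\pi e_k$ with a bare ``similarly,'' so your explicit reduction via the affine symmetry $\xi_0=\pi-\sum x_i$ (or re-running the barrier in the new coordinates) is a legitimate and welcome filling-in of that omitted detail rather than a different method.
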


\begin{proof}
Since $u_n=0$ at all the vertices, by convexity $y_i\leq 0$ for all the $y\in Du_n(0)$, namely $Du_n(0)\subset - (\R_{\geq 0}^n)$. For the other inclusion, notice that  Prop. \ref{vertexcontinuity} implies the non-emptiness of 
$Du_n(0)$, and that $x_i\geq 0$ implies that translation in the $-\partial_{y_i} $ direction remains inside $Du_n(0)$.
\end{proof}

\begin{lem}\label{subgradientdisjoint}
Let $n\geq 2$. Then the set of subgradients at the different vertices do not intersect.
\end{lem}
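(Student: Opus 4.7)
The plan is to argue by contradiction, reducing the statement to Cor.~\ref{nosubgradient}. Suppose some $y^*\in\R^n$ lies in both $Du_n(v)$ and $Du_n(v')$ for two distinct vertices $v\neq v'$ of $\Delta_n$. The inductive Dirichlet prescription forces $u_n$ to vanish at every vertex (successive boundary restrictions eventually land on $u_1\equiv 0$), so applying the two subgradient inequalities at the opposite vertex yields $\langle y^*,v'-v\rangle\leq 0$ and $\langle y^*,v-v'\rangle\leq 0$; hence $\langle y^*,v'-v\rangle=0$, and the affine function $\ell(x)=\langle y^*,x-v\rangle$ agrees with $u_n$ at both endpoints of the edge $[v,v']$.

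Next I would observe that $u_n$ actually vanishes on the whole edge. The segment $[v,v']$ is a $1$-face of $\Delta_n$ contained in some $(n-1)$-face, on which $u_n$ equals the lower-dimensional Dirichlet solution by construction; iterating the restriction all the way down gives $u_n|_{[v,v']}=u_1\equiv 0$. Together with the previous identity, this implies $\langle y^*,p-v\rangle=0=u_n(p)$ for every $p\in[v,v']$.

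Picking any $p$ in the relative interior of the edge, the subgradient inequality at $v$ will then rewrite, for every $x\in\Delta_n$, as
\[
u_n(x) \;\geq\; u_n(v) + \langle y^*,x-v\rangle \;=\; u_n(p) + \langle y^*,x-p\rangle,
\]
so $y^*\in Du_n(p)$. But $p$ is a non-vertex boundary point of $\Delta_n$, which by Cor.~\ref{nosubgradient} admits no subgradient, a contradiction.

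The heavy lifting, namely the logarithmic gradient divergence near non-vertex boundary points supplied by Prop.~\ref{logdivergencebarrier} and Cor.~\ref{gradientdivergence}, has already been done, so the only new content here is the convexity observation that a common subgradient at two vertices automatically propagates to every interior point of the connecting edge. I therefore do not anticipate any serious obstacle.
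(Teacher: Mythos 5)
Your proof is correct and follows essentially the same route as the paper: the paper's one-line argument is that a common subgradient at two vertices propagates to every point of the connecting segment, which then contradicts Cor.~\ref{nosubgradient}; you simply spell out this propagation step (using the additional, valid observation that $u_n$ vanishes on the edge by the inductive boundary prescription, though the standard convexity argument would give the propagation without it).
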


\begin{proof}
If $y$ is a subgradient at two different vertices, then it is a subgradient at any point on the line segment joining the two vertices. This does not exist by Cor. \ref{nosubgradient}.
\end{proof}

\begin{prop}\label{gradientimage}
Let $n\geq 2$.
The gradient image of $\text{Int}(\Delta_n)$  is 
\[
Du_n( \text{Int}(\Delta_n))= \R^n\setminus \bigcup Du_n(\text{vertices}).
\]
The set of limiting points for $du_n(x)$ as $x\in \text{Int}(\Delta_n)$ tends to a vertex, is $\partial Du_n(\text{vertex})$.

\end{prop}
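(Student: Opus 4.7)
\emph{Step 1 (interior strict convexity and disjointness).} I first check that $u_n$ is strictly convex on $\text{Int}(\Delta_n)$. The Hessian eigenvalues $\lambda_1 \leq \cdots \leq \lambda_n$ (with respect to $g$) satisfy $\sum_i \arctan \lambda_i = \frac{n-1}{2}\pi$, with each summand in $(-\pi/2, \pi/2)$. A nonpositive $\lambda_1$ would force $\arctan \lambda_i = \pi/2$ for all $i \geq 2$, i.e., infinite eigenvalues, contradicting the interior smoothness from Theorem \ref{Existence}. Hence $\lambda_1 > 0$ throughout the interior. Now if $du_n(x) = y \in Du_n(v)$ for some interior $x$ and vertex $v$, summing the subgradient inequalities at $x$ and $v$ forces $u_n$ to be affine on the segment $[x,v]$; the open part of this segment lies in $\text{Int}(\Delta_n)$, where a positive Hessian contradicts the vanishing of the second derivative along the segment. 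Therefore $du_n(\text{Int}(\Delta_n))$ is disjoint from every $Du_n(\text{vertex})$.

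\emph{Step 2 (surjectivity).} For $y_0 \notin \bigcup_v Du_n(v)$, the function $\phi(x) := u_n(x) - x \cdot y_0$ is continuous on the compact $\Delta_n$ by Theorem \ref{Existence}, hence attains its minimum at some $p$. At $p$, the vector $y_0$ is a subgradient of $u_n$. Corollary \ref{nosubgradient} rules out $p \in \partial \Delta_n$ away from the vertices, and the hypothesis on $y_0$ rules out $p$ being a vertex, so $p \in \text{Int}(\Delta_n)$ with $du_n(p) = y_0$. Together with Step 1, this gives the equality $du_n(\text{Int}(\Delta_n)) = \R^n \setminus \bigcup_v Du_n(v)$.

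\emph{Step 3 (vertex cluster set).} Fix a vertex $v$. For the forward inclusion, any sequence $x_k \to v$ in $\text{Int}(\Delta_n)$ with $du_n(x_k) \to y$ yields $y \in Du_n(v)$ by passing to the limit in the subgradient inequality; if $y$ lay in the topological interior of $Du_n(v)$, then $du_n(x_k) \in Du_n(v)$ for large $k$, contradicting Step 1, so $y \in \partial Du_n(v)$. For the reverse inclusion, given $y \in \partial Du_n(v)$, Lemma \ref{sandwich} lets me pick $y_k \to y$ lying outside $Du_n(v)$, and Lemma \ref{subgradientdisjoint} together with closedness of the other $Du_n(v')$ ensures that for $k$ large one has $y_k \notin \bigcup_{v'} Du_n(v')$. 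By Step 2, $y_k = du_n(x_k)$ for some $x_k \in \text{Int}(\Delta_n)$. A cluster point of $(x_k)$ cannot be interior (Step 1 would be violated), cannot be a non-vertex boundary point (Corollary \ref{nosubgradient}), and if a vertex must equal $v$ (Lemma \ref{subgradientdisjoint}); hence $x_k \to v$.

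The principal technical input is the strict convexity in Step 1, which uses both the exact phase $\frac{n-1}{2}\pi$ and the interior smoothness from Theorem \ref{Existence}. Everything else is convex analysis built on the earlier description of vertex subgradient sets (Lemmas \ref{sandwich} and \ref{subgradientdisjoint}) and the gradient divergence at non-vertex boundary points (Corollary \ref{gradientdivergence}, via Corollary \ref{nosubgradient}).
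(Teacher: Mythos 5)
Your proof is correct and follows essentially the same route as the paper: the touching affine plane (equivalently, minimizing $u_n(x)-\langle x,y_0\rangle$ over the compact simplex) gives that every $y$ is a subgradient somewhere, Corollary \ref{nosubgradient} confines the touching point to the interior or a vertex, strict interior convexity gives disjointness, and the disjointness and closedness of the vertex subgradient sets (Lemma \ref{subgradientdisjoint}) yield the cluster-set claim. You merely spell out details the paper leaves implicit, namely the derivation of strict convexity from the phase condition and the two inclusions in the cluster-set argument.
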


\begin{proof}
First we observe that any $y\in \R^n$ is a subgradient at some $x\in \Delta_n$. This is because the graph of the affine linear function $\langle x,y\rangle +a$ has to touch the graph of $u_n$ as $a$ increases from negative infinity. By Cor. \ref{nosubgradient}, this $x$ is either an interior point in $\Delta_n$, or is one of the vertices. This shows
\[
\R^n= Du_n( \text{Int}(\Delta_n)) \cup \bigcup Du_n(\text{vertices}).
\]
Notice that $Du_n( \text{Int}(\Delta_n))$ does not intersect $Du_n(\text{vertices})$ by the strict convexity of $u_n$ in the interior, so
\[
Du_n( \text{Int}(\Delta_n))= \R^n\setminus \bigcup Du_n(\text{vertices}).
\]
Now the set of subgradients at the vertices are disjoint closed sets (\cf Lemma \ref{subgradientdisjoint}). This implies the claim on the limit of gradients.
\end{proof}

\begin{cor}\label{tropicalhypersurface}
Let $n\geq 2$.
The closure of the gradient image
$
\overline{ Du_n(\text{Int}(\Delta_n))} 
$
stays within bounded distance to the tropical hypersurface
\[
\Gamma_{std}=  \text{non-smooth locus of } \max(0, y_1,\ldots y_n) \subset \R^n_y.
\]
\end{cor}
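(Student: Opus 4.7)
The plan is to combine Prop. \ref{gradientimage} (the complement of the gradient image is the union of the subgradient sets at the $(n+1)$ vertices) with the sandwich bounds of Lemma \ref{sandwich}, and match the latter against the $(n+1)$ linearity regions of $\max(0,y_1,\ldots,y_n)$. Concretely, set $R_0=\{y\in\R^n: y_i\le 0\ \forall i\}$ and $R_k=\{y\in\R^n:y_k\ge\max_i(0,y_i)\}$ for $k=1,\ldots,n$. These are the $(n+1)$ closed regions on which $\max(0,y_1,\ldots,y_n)$ is affine, so $\bigcup_{k=0}^n R_k=\R^n$ and $R_i\cap R_j\subset\Gamma_{std}$ for $i\ne j$. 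Lemma \ref{sandwich} says exactly that $Du_n(0)\subset R_0$ and $Du_n(\pi e_k)\subset R_k$, while conversely the inward translates $R_0^{-C(n)}:=\{y_i\le -C(n)\ \forall i\}$ and $R_k^{-C(n)}:=\{y_k\ge\max_{i\ne k}(0,y_i)+C(n)\}$ are contained in the corresponding subgradient sets.

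Next I would identify the closure of the gradient image. Since the subgradient sets at the vertices are pairwise disjoint (Lemma \ref{subgradientdisjoint}) and closed, their union is closed with interior equal to the union of their interiors. Hence by Prop. \ref{gradientimage},
\[
\overline{Du_n(\text{Int}(\Delta_n))}\;=\;\R^n\setminus\bigcup_{v}\text{int}\bigl(Du_n(v)\bigr)\;\subset\;\R^n\setminus\bigcup_{k=0}^n\text{int}\bigl(R_k^{-C(n)}\bigr).
\]
So it suffices to show the right-hand side is contained in the $C(n)$-neighbourhood of $\Gamma_{std}$.

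Fix $y$ in this complement. Since the $R_k$ cover $\R^n$, pick $k$ with $y\in R_k$. By assumption $y\notin\text{int}(R_k^{-C(n)})$, which means the gap $\varepsilon:=y_k-\max_{i\ne k}(0,y_i)$ lies in $[0,C(n)]$ (respectively, for $k=0$, some coordinate $y_{j}$ lies in $(-C(n),0]$). In either case one constructs an explicit point $y'\in\Gamma_{std}$ within distance $\varepsilon\le C(n)$ of $y$: for $k\ge 1$ let $y'=y-\varepsilon e_k$, making $y'_k$ tie with whichever index $j^*$ attains $\max_{i\ne k}(0,y_i)$ (or with the virtual coordinate $y_0=0$); for $k=0$ set $y'=y-y_j e_j$ so that $y'_j=0=y'_0$ achieve the max. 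In either case two entries of $(0,y'_1,\ldots,y'_n)$ tie at the maximum, so $y'$ lies on $\Gamma_{std}$, proving the uniform distance bound.

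There is no real obstacle here: the entire argument is convex-geometric once Prop. \ref{gradientimage} and the sandwich of Lemma \ref{sandwich} are in hand. The only minor care needed is the topological step passing from $\R^n\setminus\bigcup Du_n(v)$ to its closure (using disjointness of the $Du_n(v)$), and verifying that the same constant $C(n)$ works simultaneously for the vertex at the origin and for the vertices $\pi e_k$, which follows by taking the maximum over the finitely many vertex-modulus constants from Prop. \ref{vertexcontinuity}.
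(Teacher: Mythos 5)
Your proposal is correct and takes essentially the same approach as the paper: the paper reduces by symmetry to the linearity region $\{y_i\le 0\}$, notes via Lemma \ref{sandwich} that $\text{int}(Du_n(0))$ contains the translated cone $-C(n)(1,\ldots,1)-\R_{>0}^n$, and invokes Prop. \ref{gradientimage} to force $\max_i y_i\ge -C(n)$. Your version simply spells out all $(n+1)$ linearity regions and all vertices explicitly rather than invoking "without loss of generality".
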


\begin{proof}
At a given point $y\in \overline{ Du_n(\text{Int}(\Delta_n))}$, without loss we assume $\max(0,\ldots y_n)$ is achieved by zero, namely $y_i\leq 0$. By Lemma \ref{sandwich}, the interior of $Du_n(0)$ contains the open cone
$
-C(n) (1,\ldots 1) - \R_{>0}^n.
$
Prop. \ref{gradientimage} forces $y$ to lie inside its complement, so $\max_i y_i\geq -C(n)$. 
Thus $\text{dist}(y, \Gamma_{std})\leq C(n)$.
\end{proof}

\subsection{Partial Legendre transform}

The gradient divergence near the boundary is an indication that the $x_i$ coordinates are not best suited for studying the asymptotic geometry of the special Lagrangian graph. Instead, we shall use a mixture of the position and momentum coordinates, which analytically corresponds to the \emph{partial Legendre transform}. In the Lagrangian version, this idea already appeared in Matessi \cite{Matessi}.

Given $1\leq m\leq n-1$ and $(x_1,\ldots x_m)\in \text{Int}(\Delta_m)$, we can consider the $(n-m)$-dimensional slice of $\Delta_n$ prescribed by $(x_1,\ldots x_m)$. Partial Legendre transform is simply doing Legendre transform on every such slice, by replacing $x_{m+1},\ldots x_n$ with the $y_{m+1},\ldots y_n$ coordinates, where $y_i= \partial_{x_i} u_n$. 

\begin{lem}
On each constant $(x_1,\ldots x_m)$ slice of $\text{Int}(\Delta_n)$, the 
partial derivative map
$
(x_{m+1},\ldots x_n)\mapsto (y_{m+1},\ldots y_n)
$
is a diffeomorphism onto $\R^{n-m}$.
\end{lem}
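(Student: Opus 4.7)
The plan is to combine strict convexity of $u_n$ (which gives injectivity of $Dv$ and the local diffeomorphism property via the inverse function theorem) with Cor.~\ref{nosubgradient} (which gives surjectivity via a convex-analytic variational argument). Fix $(x_1,\ldots,x_m)\in\text{Int}(\Delta_m)$, denote the closed $(n-m)$-dimensional slice by
\[
\bar S=\left\{ (x_{m+1},\ldots,x_n) : x_i\geq 0,\ \sum_{i\geq m+1} x_i\leq \pi-\sum_{j\leq m} x_j \right\},
\]
and set $v(x'):=u_n(x_1,\ldots,x_m,x')$. By Theorem~\ref{Existence}, $v$ is smooth on $\text{Int}(S)$ and continuous on $\bar S$.

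For the easy direction, I would first verify that $D^2 u_n$ is strictly positive definite on $\text{Int}(\Delta_n)$: in the phase branch $\sum\arctan\lambda_i=\frac{n-1}{2}\pi$ with $\lambda_i\geq 0$, any vanishing eigenvalue would force the remaining $n-1$ arctangents each to equal $\pi/2$, contradicting finiteness of the Hessian at interior points. Restriction to the slice subspace $\{dx_1=\ldots=dx_m=0\}$ preserves positive definiteness, so $D^2 v>0$ throughout $\text{Int}(S)$. Strict convexity then gives injectivity of $Dv$, and the inverse function theorem upgrades this to a smooth local diffeomorphism onto an open subset of $\R^{n-m}$.

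The main obstacle is surjectivity. Given $\eta\in\R^{n-m}$, I would minimize the strictly convex continuous function $f(x'):=v(x')-\langle \eta,x'\rangle$ on the compact set $\bar S$; a minimizer $p\in\bar S$ exists, and it suffices to show $p\in\text{Int}(S)$, after which $Df(p)=0$ forces $Dv(p)=\eta$. Every boundary point of $S$ corresponds to a \emph{non-vertex} boundary point $P$ of $\Delta_n$, since $(x_1,\ldots,x_m)\in\text{Int}(\Delta_m)$ keeps the slice-boundary away from the vertices of $\Delta_n$. If $p\in\partial S$ were a minimizer, then $\eta$ would be a subgradient of $v$ at $p$, and I would promote this to a genuine subgradient of $u_n$ at $P$ via the standard convex-analytic partial-minimization trick: set $G(X):=u_n(X)-\sum_{i\geq m+1}\eta_i X_i$ on $\bar\Delta_n$ and define
\[
g(X_1,\ldots,X_m):=\inf\{ G(X_1,\ldots,X_m,X') : (X_1,\ldots,X_n)\in\bar\Delta_n \}.
\]
Then $g$ is convex and finite-valued on $\bar\Delta_m$ with $g(x_1,\ldots,x_m)=G(P)$ by the minimizing property of $p$, so a subgradient $(Y_1,\ldots,Y_m)$ of $g$ at the interior point $(x_1,\ldots,x_m)$ concatenates with $\eta$ to give $Y\in\partial u_n(P)$, contradicting Cor.~\ref{nosubgradient}. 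The key technical input for this extension is global convexity of $u_n$ (making $G$ convex, hence $g$ convex) together with continuity of $u_n$ up to $\partial\Delta_n$ (making the infimum attained on the compact slice).
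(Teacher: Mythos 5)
Your proof is correct and follows essentially the same strategy as the paper: strict convexity of $u_n$ in the interior for the ``diffeomorphism onto its image'' half, and Cor.~\ref{nosubgradient} together with the observation that the slice avoids the vertices for surjectivity. The one place where you add genuine content is the marginal-function step that upgrades the subgradient $\eta$ of the slice restriction $v$ at a boundary point $p\in\partial S$ to a full subgradient of $u_n$ at $P\in\partial\Delta_n$; the paper compresses this into ``the argument imitates Prop.~\ref{gradientimage},'' but the touching argument for $v$ alone only controls $u_n$ along the slice, so some such convex-analytic lift (or alternatively a direct appeal to the gradient divergence of Cor.~\ref{gradientdivergence} along the slice) is actually needed, and you supply it cleanly.
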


\begin{proof}
The partial derivative map is a diffeomorphism onto its image, because $u_n$ restricts to a smooth and strictly convex function on the slice, which is an open convex set.
To see this map is surjective to $\R^{n-m}$, the argument imitates Prop. \ref{gradientimage}. Notice that the slice stays away from the vertices of $\Delta_n$, since $(x_1,\ldots x_m)\in \text{Int}(\Delta_m)$.
\end{proof}

We can then define the partial Legendre transform function
\begin{equation}
u_{n,m}(x_1,\ldots x_m, y_{m+1},\ldots y_n)= \min_{x_{m+1,\ldots x_n}}   (u_n(x)-\sum_{m+1}^n x_ky_k),
\end{equation}
which satisfies
\[
\begin{cases}
  \partial_{x_k }u_{n,m}= y_k,\quad & k=1,\ldots m,
  \\
   \partial_{y_k }u_{n,m}= -x_k,\quad & k=m+1,\ldots n.
\end{cases}
\]
The domain of definition is $\text{Int}(\Delta_m)\times \R^{n-m}$. The function $u_{n,m}$ is convex in $x_1,\ldots x_m$, and concave in $y_{m+1},\ldots y_n$.

\begin{lem}
The special Lagrangian graph equation  in the $x_1,\ldots x_m,y_{m+1},\ldots y_n$ coordinate system reads
$
\arctan D^2 u_{n,m}= \frac{m-1}{2}\pi.
$
\end{lem}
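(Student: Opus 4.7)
My plan is a direct algebraic computation using Schur complement identities, exploiting the fact that partial Legendre transform alters the complex determinant $\det(I+iD^2 u)$ by an explicit factor depending only on the block being transformed.

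First I would express $D^2 u_{n,m}$ in terms of $H := D^2 u_n$. Block-decomposing
\[
H = \begin{pmatrix} H_{xx} & H_{xy} \\ H_{yx} & H_{yy} \end{pmatrix}
\]
according to the splitting into the first $m$ and last $n-m$ coordinates, implicit differentiation of the defining identities $y_k = \partial_{x_k} u_n$ (for $k > m$) and $-x_k = \partial_{y_k} u_{n,m}$ gives the standard partial Legendre formula
\[
D^2 u_{n,m} = \begin{pmatrix} H_{xx} - H_{xy} H_{yy}^{-1} H_{yx} & H_{xy} H_{yy}^{-1} \\ H_{yy}^{-1} H_{yx} & -H_{yy}^{-1} \end{pmatrix}.
\]

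Next, after the linear reduction to $g_{ij} = \delta_{ij}$ from Section~\ref{specialLaggrapheqnsection}, I would evaluate $\det(I + iD^2 u_{n,m})$ by a Schur expansion along the $y''$ block. The diagonal block $I - iH_{yy}^{-1}$ factors as $-i\, H_{yy}^{-1}(I + iH_{yy})$ via the identity $H_{yy} - iI = -i(I + iH_{yy})$, producing a factor $(-i)^{n-m}(\det H_{yy})^{-1}\det(I + iH_{yy})$. The Schur complement in the $x'$ block simplifies, once the two $-iH_{xy}H_{yy}^{-1}H_{yx}$ contributions (one from the top-left block of $D^2 u_{n,m}$ and one from the cross-term Schur correction) cancel algebraically, to $I + iH_{xx} + H_{xy}(I + iH_{yy})^{-1}H_{yx}$, which is exactly the Schur complement arising in $\det(I + iH)$. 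Combining these yields the clean formula
\[
\det(I + i D^2 u_{n,m}) = \frac{(-i)^{n-m}}{\det H_{yy}}\det(I + iH).
\]

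Finally, the strict convexity of $u_n$ on $\text{Int}(\Delta_n)$ (noted in the inductive boundary prescription subsection) ensures $H$, and hence its principal submatrix $H_{yy}$, is positive definite, so $\det H_{yy} > 0$. Taking arguments modulo $\pi$, and using that each factor $1 + i\mu_i$ (for $\mu_i$ an eigenvalue of $D^2 u_{n,m}$) has positive real part so that $\arg(1 + i\mu_i) = \arctan \mu_i$, we obtain
\[
\arctan D^2 u_{n,m} \equiv \hat{\theta} - \tfrac{(n-m)\pi}{2} = \tfrac{(m-1)\pi}{2} \pmod{\pi},
\]
which is the claimed equation in the $\mod \pi$ convention of Section~\ref{specialLaggrapheqnsection}. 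I expect no serious obstacle; the main point of care is tracking the factors of $i$ through the Schur expansion to verify the cancellations that yield the clean formula.
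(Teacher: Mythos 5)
Your block computation is correct as far as it goes: the implicit-differentiation formula for $D^2 u_{n,m}$, the factorization $I - iH_{yy}^{-1} = -iH_{yy}^{-1}(I+iH_{yy})$, the collapse of the Schur complement to $I + iH_{xx} + H_{xy}(I+iH_{yy})^{-1}H_{yx}$ (the same Schur complement that appears in $\det(I+iH)$), and hence the identity $\det(I+iD^2u_{n,m}) = (-i)^{n-m}(\det H_{yy})^{-1}\det(I+iH)$ all check out, and since the paper states this lemma without proof, your determinant identity is a reasonable backbone for one. Two small caveats: the reduction to $g_{ij}=\delta_{ij}$ should be performed with a block-triangular congruence adapted to the splitting $\{1,\dots,m\}\cup\{m+1,\dots,n\}$ (an arbitrary diagonalizing change of the $x$-variables mixes the transformed and untransformed coordinates), and one should specify which metric on the mixed coordinate $n$-plane defines the eigenvalues $\mu_i$; both points are routine.

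The genuine gap is the final step. Your argument determines $\sum_i\arctan\mu_i$ only modulo $2\pi$, whereas the lemma asserts the exact value $\tfrac{(m-1)\pi}{2}$, and the exact branch is what is used downstream: in Prop.~\ref{boundarychartexpdecay} and Lemma~\ref{awayfromvertex} one needs $u_{n,m}$ and $u_m$ (resp.\ $u_{n-1,m}$) to solve the \emph{same} equation $\arctan D^2 u=\tfrac{(m-1)\pi}{2}$, so that their difference satisfies a homogeneous uniformly elliptic equation; a discrepancy by a multiple of $2\pi$ would break that. The congruence alone does not pin down the branch in general: by the Haynsworth inertia formula the inertia of $D^2u_{n,m}$ is $(m,\,n-m,\,0)$ (its Schur complement with respect to the block $-H_{yy}^{-1}$ is exactly $H_{xx}>0$), so $\sum_i\arctan\mu_i$ lies in the interval $\bigl(-\tfrac{(n-m)\pi}{2},\,\tfrac{m\pi}{2}\bigr)$ of length $\tfrac{n\pi}{2}$, which for $n\geq 6$ contains more than one point of $\tfrac{(m-1)\pi}{2}+2\pi\Z$. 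The fix is short: the function sending a positive definite symmetric $H$ to $\sum_i\arctan\mu_i - \sum_i\arctan\lambda_i$ is continuous on the connected cone of such matrices and, by your determinant identity, takes values in the discrete set $-\tfrac{(n-m)\pi}{2}+2\pi\Z$; hence it is constant, and evaluating at $H=I$ (where $D^2u_{n,m}=\mathrm{diag}(I_m,-I_{n-m})$) shows the constant is exactly $-\tfrac{(n-m)\pi}{2}$. Combined with $\arctan D^2u_n=\tfrac{(n-1)\pi}{2}$ on $\mathrm{Int}(\Delta_n)$, this yields the stated branch.
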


\begin{lem}\label{expdecay1}
We have
\[
\begin{cases}
   x_i \leq e^{y_i/C},\quad i=m+1,\ldots n,
\\
|u_{n,m}(x_1,\ldots x_m, y_{m+1},\ldots)- u_m(x_1,\ldots x_m) |\leq C'\max_{k\geq m+1} e^{y_k/C}.
\end{cases}
\]
The constants only depend on $n, g$.
\end{lem}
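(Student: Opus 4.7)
The plan is to deduce both bounds from the inductive boundary modulus of continuity (\ref{boundedmodulus2}) combined with convexity, in the spirit of a Legendre transform argument. I will treat the two assertions in turn.

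For the first bound, I fix an interior point $x \in \text{Int}(\Delta_n)$ with $y_i = \partial_{x_i} u_n(x)$ for some $i \geq m+1$, and move toward the face $\{x_i = 0\}$. Convexity of $u_n$ gives
\[
u_n(x - x_i e_i) \geq u_n(x) - x_i y_i,
\]
i.e. $u_n(x) - u_n(x - x_i e_i) \leq x_i y_i$. The point $x - x_i e_i$ lies on the face $\{x_i = 0\}$, where the Dirichlet data prescribes $u_n(x - x_i e_i) = u_{n-1}(x_1,\ldots,\hat{x}_i,\ldots,x_n)$. Applying (\ref{boundedmodulus2}) with the face $\{x_i=0\}$ now yields the reverse-direction lower bound
\[
u_n(x) \geq u_{n-1}(x_1,\ldots,\hat{x}_i,\ldots,x_n) + C\, x_i \log \tfrac{x_i}{2\pi}.
\]
Chaining the two inequalities and dividing by $x_i>0$ gives $C\log(x_i/2\pi) \leq y_i$, hence $x_i \leq 2\pi e^{y_i/C}$, which is the first bound after adjusting $C$.

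For the second bound, the upper estimate $u_{n,m} \leq u_m$ is immediate: plugging $\tilde{x}_{m+1}=\ldots=\tilde{x}_n=0$ as a test point into the minimization defining $u_{n,m}$ and using the boundary condition $u_n|_{\tilde{x}=0} = u_m$ gives the inequality. For the matching lower bound, apply (\ref{boundedmodulus2}) in the other direction to every admissible test point:
\[
u_n(x_1,\ldots,x_m,\tilde{x}_{m+1},\ldots,\tilde{x}_n) - \sum_{m+1}^n y_k \tilde{x}_k \;\geq\; u_m + \sum_{k=m+1}^n \bigl[C\tilde{x}_k\log\tfrac{\tilde{x}_k}{2\pi} - y_k\tilde{x}_k\bigr].
\]
Each summand is decoupled, and an elementary one-dimensional calculus exercise shows that the minimum of $g(t) = Ct\log(t/2\pi) - yt$ over $t\geq 0$ is attained at $t^* = 2\pi e^{(y-C)/C}$, with minimum value comparable to $-e^{y/C}$. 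Taking the infimum in $\tilde{x}$ gives $u_{n,m} \geq u_m - C'\sum_{k>m} e^{y_k/C} \geq u_m - (n-m)C'\max_{k} e^{y_k/C}$, which combined with the upper estimate yields the claim.

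The only points requiring mild care are: (i) ensuring the unconstrained one-dimensional minimizer $t^*$ actually lies in the feasible range of the slice (which is automatic once $y_k$ is sufficiently negative, the only regime where the bound is nontrivial, and the cases of moderate $y_k$ can be absorbed into the constant since $u_{n,m}$ and $u_m$ are both bounded); and (ii) verifying that the inductive modulus (\ref{boundedmodulus2}) is invoked in a region where it holds uniformly, which it does thanks to the decoupling above. Neither constitutes a real obstacle — the main substantive inputs, namely the boundary continuity (\ref{boundedmodulus2}) and convexity of $u_n$, are already in hand.
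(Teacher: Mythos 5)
Your proof is correct and follows the same essential route as the paper's, with both bounds coming from the boundary modulus of continuity (\ref{boundarymodulus})--(\ref{boundedmodulus2}) combined with convexity. The only variation is in the second bound: the paper evaluates the Legendre identity $u_{n,m}=u_n(x)-\sum x_k y_k$ at the minimizing point and uses the first bound $x_k\leq e^{y_k/C}$ to absorb the polynomial factor, whereas you substitute the modulus-of-continuity lower bound directly into the infimum and perform a decoupled one-dimensional minimization; these are elementary rearrangements of the same ingredients (also, your first step invokes (\ref{boundedmodulus2}) but really uses the single-face version (\ref{boundarymodulus}) applied to $\{x_i=0\}$, which is fine given the remark following that estimate).
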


\begin{proof}
The global estimate $x_i\leq e^{y_i/C}$ follows from the at worst log divergence of gradient, \cf Prop. \ref{gradientatworstlog}, which comes from the boundary modulus of continuity estimate (\ref{boundarymodulus}), and the constant depends only on $n, g$. By the definition of the Legendre transform
\[
u_{n,m} =u_n(x)-\sum_{m+1}^n x_ky_k ,
\]
so using the boundary modulus of continuity (\ref{boundedmodulus2}),
\[
|u_{n,m}-u_m| \leq |u_n(x)-u_m|+\sum x_k |y_k| \leq C\sum x_k|\log x_k| + \sum x_k |y_k|\leq C'\max_{m+1}^n |y_k| e^{y_k/C}.
\]
The $|y_k|$ factor can be absorbed by the exponential after modifying constants.
\end{proof}

\begin{prop}\label{boundarychartexpdecay}
Given small $\delta>0$, and $k\geq 2, \alpha\in (0,1)$, there exists large positive constants $C(\delta), C'(\delta)$, such that 
on the region
\begin{equation}\label{boundarychartm}
(x_1,\ldots x_m)\in \Delta_m^\delta,\quad y_{m+1},\ldots y_n\leq -C(\delta),
\end{equation}
we have the local higher order derivative estimates
\[
\norm{ u_{m,n}- u_m  }_{C^{k,\alpha}} \leq C(\delta, k,\alpha) \max_{i\geq m+1} e^{y_i/C},
\]
and
\[
\norm{ \partial_{y_i} u_{m,n}   }_{C^{k,\alpha}} \leq C(\delta, k,\alpha)  e^{y_i/C},\quad i=m+1,\ldots n.
\]
\end{prop}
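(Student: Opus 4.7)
The plan is to bootstrap the $C^0$ exponential decay of Lemma \ref{expdecay1} to higher derivative decay by applying elliptic regularity to the difference $w := u_{n,m} - u_m$. The critical observation is that $u_m(x_1,\ldots,x_m)$, viewed as a function on $\Delta_m^\delta\times\R^{n-m}$ independent of $y_{m+1},\ldots,y_n$, is itself a solution of the PDE $F(D^2 U) := \sum_i \arctan\lambda_i(D^2 U) = \frac{m-1}{2}\pi$ in the full $(x_1,\ldots,x_m,y_{m+1},\ldots,y_n)$-coordinate system: its Hessian has the $m$ eigenvalues of $D^2 u_m$ (whose arctan sum is $\frac{m-1}{2}\pi$ by the induction hypothesis), augmented by $n-m$ zero eigenvalues in the $y$-directions which contribute $0$ to the sum. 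Thus both $u_{n,m}$ and the trivially-extended $u_m$ solve the same equation, and the mean-value identity yields that $w$ satisfies the linear equation
\[
\sum_{i,j} A^{ij}(x,y)\, w_{ij}=0,\qquad A^{ij}:=\int_0^1 F'_{ij}\bigl(tD^2u_{n,m}+(1-t)D^2u_m\bigr)\,dt,
\]
whose coefficient matrix is a convex combination of $(I+H^2)^{-1}$-type matrices and is therefore symmetric with eigenvalues in $(0,1]$.

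The strategy is then to apply Schauder interior estimates on unit balls contained in the region (\ref{boundarychartm}). For this one needs uniform $C^{k,\alpha}$ bounds on $u_{n,m}$ itself, both to control the coefficients $A^{ij}$ and to bound the ellipticity constants from below. The corresponding bound on $u_m$ is immediate from the induction hypothesis on the interior smoothness of $u_m$ over $\Delta_m^\delta$ (Theorem \ref{Existence}). The bound on $u_{n,m}$ is the technical core and is argued by a small-perturbation argument around the smooth background $u_m$: since Lemma \ref{expdecay1} forces $\|w\|_{C^0}$ to be arbitrarily small once $C(\delta)$ is sufficiently large, and the linearization of $F$ at $u_m$ is uniformly elliptic because its symbol $(I+(D^2u_m)^2)^{-1}$ has eigenvalues $(1+\lambda_i^2)^{-1}\in(0,1]$ with $\lambda_i$ bounded on $\Delta_m^\delta$, Savin's small-perturbation theorem for fully nonlinear uniformly elliptic equations yields uniform interior $C^{2,\alpha}$ estimates on $u_{n,m}$; standard Schauder bootstrap applied to differentiated versions of the PDE then gives uniform $C^{k,\alpha}$ estimates to all orders.

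With these bounds in place, Schauder interior estimates for $w$ on unit balls around any point $p$ in the shrunken region give
\[
\|w\|_{C^{k,\alpha}(B_{1/2}(p))}\leq C(\delta,k,\alpha)\,\|w\|_{C^0(B_1(p))}\leq C'(\delta,k,\alpha)\max_{i\geq m+1}e^{y_i/C},
\]
which is the first estimate. For the second, note that $\partial_{y_i}u_{n,m}=-x_i$; differentiating the PDE in $y_i$ shows $v:=\partial_{y_i}u_{n,m}$ satisfies the exact linearization of $F$ at $u_{n,m}$, which is uniformly elliptic with $C^{k-2,\alpha}$ coefficients by the previous step. Applying Schauder with the $C^0$ bound $|v|=x_i\leq e^{y_i/C}$ from Lemma \ref{expdecay1} yields the desired $C^{k,\alpha}$ exponential decay. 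The principal obstacle is the small-perturbation step, since the partial Legendre transform produces the non-supercritical phase $\frac{m-1}{2}\pi<\frac{n-2}{2}\pi$ when $m<n-1$, so the Wang-Yuan Hessian estimate cannot be invoked directly on $u_{n,m}$; one must instead exploit the uniform ellipticity of the linearization at the smooth background solution $u_m$ together with the exponential smallness of the perturbation $w$ to close the loop.
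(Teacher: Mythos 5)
Your proof follows essentially the same route as the paper's: both observe that the trivially extended $u_m$ solves the same elliptic equation as $u_{n,m}$, use the $C^0$ exponential decay of Lemma~\ref{expdecay1} to verify the hypotheses of Savin's small-perturbation theorem and obtain uniform $C^{2,\alpha}$ control of $u_{n,m}$, and then linearize and bootstrap with Schauder to promote the exponentially small $C^0$ bounds on $u_{n,m}-u_m$ and on $\partial_{y_i}u_{n,m}=-x_i$ to $C^{k,\alpha}$ decay. Your closing observation---that Wang--Yuan is unavailable at the non-supercritical phase $\tfrac{m-1}{2}\pi$ and must be replaced by the small-perturbation argument around the smooth background $u_m$---is precisely what dictates this structure in the paper as well.
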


\begin{proof}
The special Lagrangian graph equation satisfied by $u_{n,m}$ is elliptic, and independent of first order derivatives.
The function $u_m$ (independent of the $y_{m+1},\ldots y_n$ variables) is a special solution to the same equation with smoothness bounds on $\Delta_m^\delta\times \R^{n-m}$. Morever, for $y_{m+1},\ldots y_n$ sufficiently negative depending on $\delta$, the $C^0$-difference $|u_{n,m}-u_m|$ can be made arbitrarily small by the exponential decay estimate in Lemma \ref{expdecay1}. Thus on a slightly shrunken domain, we can verify the conditions for Savin's small perturbation theorem \cite{Savin}, to bound $\norm{u_{n,m}-u_m}_{C^{2,\alpha}}$ by a small quantity, on local coordinate balls of radius $O(\delta)$.

Once we know $u_{n,m}$ is $C^{2,\alpha}$-close to $u_m$, the problem linearises, and we can bootstrap the exponentially small $C^0$ bound on $u_{n,m}-u_m$ to $C^{k,\alpha}$ norm.  Similarly, by differentiating the equation we obtain a uniformly elliptic equation on $\partial_{y_i} u_{m,n}=-x_i$. The $C^0$ bound on this quantity is $O(e^{y_i/C})$, which we can bootstrap to higher orders.
\end{proof}

\begin{rmk}
The reason to separately state the higher derivative bound on $\partial_{y_i} u_{m,n}=-x_i$, is that when the $y_i$ are large but not all of comparable order, then some $x_i$ would be much smaller than others.
%In the proof above, the application of Savin's perturbation theorem can be replaced by the Allard regularity theorem, in a more geometric approach. 
\end{rmk}

\begin{rmk}\label{localchartrmk}
We will regard (\ref{boundarychartm}) as giving a local chart of the special Lagrangian graph. Lemma \ref{expdecay1} implies that $x$ lies close to the shrunken boundary face $\Delta_m^\delta$. Conversely, for $(x_1,\ldots x_m)\in \Delta_m^\delta$, and $x_{m+1},\ldots x_n$ sufficiently small depending on $\delta$, the gradient divergence estimate in Cor. \ref{gradientdivergence} implies that $y_i=\partial_{x_i} u_n$ is very negative, hence lies inside the chart (\ref{boundarychartm}). These charts cover a $C(\delta)^{-1}$ neighbourhood of the boundary of $\partial \Delta_n$, minus the $O(\delta)$-neighbourhood of the vertices. Of course, the interior region $\{ \text{dist}(x, \partial \Delta_n)\gtrsim C(\delta)^{-1} \}$ is simply described by $u_n$ itself, with its interior smoothness estimates.
\end{rmk}

\section{Geometry of the solution}

%We now study the geometry of the special Lagrangian defined by the  solution $u_n$. 

\subsection{Special Lagrangian current}

We extend $u_n$ to a function over $\Delta_n\cup -\Delta_n$ as an odd function,
$
u_n(-x)=-u_n(x).
$
Over $\text{Int}(-\Delta_n)$, this odd extension $u_n$ satisfies 
\[
\arctan D^2 u_n= - \frac{(n-1)\pi}{2}.
\]
This differs from $\hat{\theta}=\frac{n-1}{2}\pi$ by an integer multiple of $\pi$. Thus under the orientation convention that $\text{Re}(e^{-\sqrt{-1}\hat{\theta}}\Omega)>0$, the graph of the gradient $du_n$ on $\text{Int}(\Delta_n\cup (-\Delta_n))$
defines a smooth special Lagrangian  $\mathbb{L}^o\subset T^*T^n$, with phase angle $\hat{\theta}$.

The main drawback is that $\mathbb{L}^o$ is not yet a closed subset of $T^*T^n$. To remedy this, we view $\mathbb{L}^o$ as defining an integration current $\mathbb{L}$. The support of $\mathbb{L}$ contains also the points on the closure of $\mathbb{L}^o$. As a current $\mathbb{L}$ is sent to $-\mathbb{L}$ under the involution 
\begin{equation}\label{involution}
x_i\to -x_i, \quad y_i\to y_i.
\end{equation}
We will use the notation $\mathbb{L}_n$ when we wish to emphasize on the $n$-dependence.

\begin{prop}\label{integralcurrentnoboundary}
The current $\mathbb{L}$ is a special Lagrangian integral current of phase $\hat{\theta}$, and $\partial \mathbb{L}=0$. The volume growth has the upper bound $\text{Mass}(\mathbb{L}\cap B(R))\leq CR^{n-1} $.
\end{prop}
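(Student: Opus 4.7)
The proposition asserts three things: $\mathbb{L}$ is a special Lagrangian integral current of phase $\hat\theta$, its boundary vanishes, and it has mass growth at most $R^{n-1}$. The special Lagrangian structure is inherited from the smooth part $\mathbb{L}^o$: on $\text{Int}(\Delta_n)$ the graph of $du_n$ satisfies $\arctan D^2 u_n = \hat\theta$ by Theorem \ref{Existence}, while on $\text{Int}(-\Delta_n)$ the odd extension satisfies $\arctan D^2 u_n = -\hat\theta$, which agrees with $\hat\theta$ modulo $\pi$ and gives the correct calibration sign after fixing the orientation convention $\text{Re}(e^{-i\hat\theta}\Omega) > 0$. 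So $\mathbb{L}^o$ is a smooth calibrated submanifold, and the content of the proposition lies in controlling what happens at the closure.

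For the mass bound, I would decompose $\mathbb{L}\cap B(R)$ via the local models developed in the excerpt. The interior region $\{x : \text{dist}(x,\partial \Delta_n)\geq C^{-1}\}$ gives a graph of bounded geometry with mass $O(1)$. Around each $m$-dimensional face with $1\leq m \leq n-1$, the partial Legendre transform chart of Prop. \ref{boundarychartexpdecay} presents $\mathbb{L}$ as an exponentially small graph over $\Delta_m^\delta \times \{y_{m+1},\ldots,y_n \leq -C(\delta)\}$, whose mass in $B(R)$ is bounded by $\text{Vol}(\Delta_m^\delta)\cdot R^{n-m}\lesssim R^{n-m}$, with maximal growth $R^{n-1}$ at $m=1$. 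Near each of the $n+1$ vertices of the coamoeba, a full Legendre transform (or direct analysis based on Lemma \ref{sandwich} and Prop. \ref{gradientimage}) shows that the $y$-image of $\mathbb{L}$ stays within a bounded neighbourhood of the hypersurface $\partial Du_n(v_i)$, again producing mass $\lesssim R^{n-1}$ in $B(R)$. Summing the finitely many contributions gives $\text{Mass}(\mathbb{L}\cap B(R))\leq CR^{n-1}$, which in particular establishes local finiteness and hence the integral current property.

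For $\partial \mathbb{L} = 0$, I test against a compactly supported smooth $(n-1)$-form $\beta$ and compute $\partial \mathbb{L}(\beta) = \int_{\mathbb{L}^o} d\beta$ by applying Stokes' theorem to the parameterization $\phi_\pm(x) = (x, du_n(x))$ on $\text{Int}(\Delta_n)\cup \text{Int}(-\Delta_n)$ with small $\epsilon$-neighbourhoods of the vertices removed. On the parts of $\partial \Delta_n$ away from the vertices, the gradient divergence of Cor. \ref{gradientdivergence} sends $\phi_\pm(x)$ to infinity where $\beta$ vanishes, and the exponential decay of Prop. \ref{boundarychartexpdecay} makes $\phi_\pm^*\beta$ decay uniformly, so the contribution of these face pieces vanishes in the limit. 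The only remaining contributions come from the $\epsilon$-boundaries around the $n+1$ shared vertices $v_i$; as $\epsilon \to 0$ these approach integrals $\int_{V_i^\pm}\beta$ over the limit hypersurfaces $V_i^\pm = \{v_i\}\times \partial Du_n(v_i)$ equipped with the orientations inherited from the $\Delta_n$ and $-\Delta_n$ sides.

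The heart of the proof, and the main obstacle, is to show $V_i^+ + V_i^- = 0$ as currents. The involution $x \to -x$, $y \to y$ fixes each vertex $v_i$ (whose coordinates all lie in $\{0,\pi\}\subset \R/2\pi\Z$) and preserves the cotangent fibre $\{v_i\}\times \R^n$, while sending $\mathbb{L}$ to $-\mathbb{L}$ by the symmetry noted before the proposition. Consequently the approach limit from $-\Delta_n$ is the involution image of the approach limit from $\Delta_n$, and the sign flip supplies opposite orientations on the common underlying set, giving the cancellation $V_i^+ + V_i^- = 0$. The delicate technical step is to justify that the $\epsilon$-boundary integrals converge to well-defined integration currents $V_i^\pm$ on $\partial Du_n(v_i)$ with the claimed orientations; this requires a careful analysis of the radial behaviour of $du_n$ along rays approaching $v_i$ inside $\Delta_n$, using that $Du_n(v_i)$ is a closed convex set with nonempty interior (Lemma \ref{sandwich}) whose boundary is swept out in a controlled way by gradient limits (Prop. \ref{gradientimage}).
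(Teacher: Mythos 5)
Your skeleton---calibrated current, mass decomposition, boundary concentrated at the vertices, cancellation by the involution---matches the paper's, and you correctly identify the involution as the mechanism killing $\partial\mathbb{L}$. But both of your key steps have genuine gaps. First, the mass bound near the vertices: knowing that the $y$-image of $\mathbb{L}$ stays near the hypersurface $\partial Du_n(v_i)$ does not bound the $n$-dimensional mass of $\mathbb{L}$ there, because $\mathbb{L}$ is not a bounded-geometry graph over the $y$-coordinates. Near a vertex the Hessian $D^2u_n$ has eigenvalues degenerating to $0$ as well as blowing up (cf.\ the remark after Lemma \ref{2DHessian}), so the full Legendre transform has unbounded Hessian and neither the pure $x$- nor the pure $y$-projection controls $\text{Mass}$. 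The paper instead writes $\text{Mass}(\mathbb{L}\cap B(R))=\int_{\mathbb{L}^o\cap B(R)}\text{Re}(e^{-\sqrt{-1}\hat\theta}\Omega)$ and observes that every monomial $dy_{i_1}\wedge\cdots\wedge dy_{i_k}\wedge dx_{i_{k+1}}\wedge\cdots\wedge dx_{i_n}$ appearing there has $k<n$ (at least one $dx$ factor); strict convexity of $u_n$ makes each such mixed projection injective, with image inside $\Delta_{n-k}\times\{|y_i|\le R\}$, giving $CR^{k}\le CR^{n-1}$ uniformly over all of $\Delta_n$ including the vertex region. Your face-chart contributions via Prop.\ \ref{boundarychartexpdecay} are fine, but the vertex region is exactly where your method breaks down (and note you cannot invoke the graphical structure of Cor.\ \ref{graphicalnearvertexlem} there, since that is derived later via Allard regularity, which presupposes this proposition).

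Second, your treatment of $\partial\mathbb{L}$ reduces the whole problem to the ``delicate technical step'' of showing that the $\epsilon$-boundary integrals converge to well-defined integration currents $V_i^{\pm}$ over $\partial Du_n(v_i)$ with identified orientations---and that step is precisely the hard analytic content, which you do not carry out. The paper avoids constructing these limits altogether: from the volume estimates $\text{Mass}(\mathbb{L}^o\cap\{|x_i|<r\}\cap B(R))\le CR^{n-1}r$ (and $O(R^{n-2}r^2)$ for monomials with two or more $dx$ factors), testing against $d(\chi\alpha)$ with $|d\chi|\le Cr^{-1}$ and letting $r\to 0$ shows that $\partial\mathbb{L}$ is a locally finite-mass current supported on the vertex cotangent fibres whose Riesz representation involves only $dy$-monomials. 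Since the $dy_i$ are invariant under the involution while $\mathbb{L}\mapsto-\mathbb{L}$, the measure coefficients must vanish. This soft argument needs no identification of the limit set, no orientation bookkeeping on $\partial Du_n(v_i)$, and no radial analysis of $du_n$ along rays into the vertex. To complete your route you would have to supply the convergence and orientation statements you deferred; as written, the proposal does not yet constitute a proof.
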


\begin{proof}
We first show that $\mathbb{L}$ has locally finite mass in $T^*T^n$. For simplicity we first treat the special case $g_{ij}=\delta_{ij}$, and consider a large ball $B(R)\subset T^*T^n$. By the special Lagrangian property,
\[
\text{Mass}(\mathbb{L}\cap B(R))=\int_{ \mathbb{L}^o\cap B(R) } \text{Re} ( e^{-\sqrt{-1}\hat{\theta}} \Omega) = \int_{ \mathbb{L}^o\cap B(R) }  \text{Re} ( -\sqrt{-1}\bigwedge_1^n (dy_i-\sqrt{ -1} dx_i) ),
\]
which is a weighted sum of terms like
\[
\int_{ \mathbb{L}^o\cap B(R) }  dy_{i_1}\wedge \ldots dy_{i_k}\wedge dx_{i_{k+1}}\ldots \wedge dx_{i_n},\quad k<n.
\]
Notice here that at least one $dx_i$ factor appears in the integrand.
Now $y_i= \partial_{x_i} u_n$, and $u_n$ is a smooth and strictly convex function on $\text{Int}(\Delta_n)$.  Thus the projection from the portion of $\mathbb{L}^o$ over $\text{Int}(\Delta_n)$, to the $n$-dimensional plane with coordinates $y_{i_1},\ldots x_{i_n}$, is diffeomorphic to its image.  This image is contained in 
$\Delta_{n-k}\times \{ |y_i|\leq R, \forall i\leq k  \}$, so the volume integral is bounded by $CR^k\leq CR^{n-1}$. More generally, we can reduce to the $g_{ij}=\delta_{ij} $ case by a linear change of coordinates (\cf section \ref{specialLaggrapheqnsection}).

%Next, observe that for small $r>0$,
%\begin{equation*}\label{volumenearboundary}
%  \text{Vol}(\mathbb{L}\cap B(R)\cap \{ \text{dist}(\cdot, \partial \Delta)\leq r \}) \leq CR^{n-1}r.  
%\end{equation*}
%by the same argument as above.  By the coarea formula, there is some level set  of the function $\text{dist}(\cdot, \partial \Delta_n)=a$ for $0< a<r$, whose $(n-1)$-dim area inside $\mathbb{L}^o\cap B(R)$ is bounded by $CR^{n-1}$. Thus the portion of $\mathbb{L}^o$ over $\{ \text{dist}(\cdot, \partial \Delta_n)>a  \}\subset \Delta_n$, is locally an integral current with boundary, such that within $B(R/2)$ its boundary mass is also uniformly bounded. We make the same construction over $-\Delta_n$ using the involution (\ref{involution}). Taking the limit $r\to 0$, we can apply Federer-Fleming compactness theorem to obtain a limiting integral current with locally finite mass boundary. By construction the limiting current is $\mathbb{L}$, so we conclude that $\partial \mathbb{L}$ has locally finite mass bounded by $CR^{n-1}$.

Next we analyse the boundary of the current $\mathbb{L}$. From Cor. \ref{gradientdivergence} we know the  divergence of the gradient $du_n$ near all the boundary strata except for the vertices, so the only possible contribution to $\partial \mathbb{L}\cap B(R)$ comes from the neighbourhood of the vertices. We focus on the vertex at the origin. By the same kind of volume computation as above, for small $r\ll 1$ and fixed $R$, we have
\[
\text{Mass}(\mathbb{L}^o\cap \{ |x_i|<r, |y_i|\leq R, \forall i) \leq CR^{n-1}r.
\]
Morever, the integral of forms involving at least two $dx_i$ factors are suppressed by $O(R^{n-2} r^2)$.

We can pick a standard cutoff function $\chi$ which is supported on $\{ |x_i|<r,\forall i  \}$ (and similarly on the neighbourhood of the other vertices), and satisfies $|d\chi|\leq Cr^{-1}$. Then for any fixed $(n-1)$-form $\alpha$ compactly supported inside $B(R)$, we have
\[
\begin{split}
& \int_{\mathbb{L}^o} d(\chi\alpha) \leq \norm{d\alpha}_{C^0} CR^{n-1} r + \sum \norm{\alpha(\partial_{y_{i_1}}, \ldots \partial_{y_{i_{n-1}}})}_{C^0}  CR^{n-1} r\norm{d\chi}_{C^0}
\\
&+ \norm{\alpha}_{C^0} CR^{n-2}r^2\norm{d\chi}_{C^0} 
\\
& \leq C\norm{\alpha}_{C^1} R^{n-1} r+ \sum \norm{
\alpha(\partial_{y_{i_1}}, \ldots \partial_{y_{i_{n-1}}})}_{C^0}  CR^{n-1} .
\end{split}
\]
Observe that
\[
\int_{\mathbb{L}^o} d(  (1-\chi)\alpha) =0
\]
since $(1-\chi)\alpha$ is supported away from the vertex neighbourhoods. Thus
\[
\int_{\mathbb{L}^o} d\alpha \leq C\norm{\alpha}_{C^1} R^{n-1} r+ 
\sum \norm{
\alpha(\partial_{y_{i_1}}, \ldots \partial_{y_{i_{n-1}}})}_{C^0}
  CR^{n-1} .
\]
Taking the $r\to 0$ limit, this shows 
\[
\int_{\mathbb{L}^o} d\alpha \leq \sum \norm{
\alpha(\partial_{y_{i_1}}, \ldots \partial_{y_{i_{n-1}}})}_{C^0}  CR^{n-1} .
\]
Thus $\partial \mathbb{L}$ has finite mass within the ball $B(R)$. Clearly $\partial \mathbb{L}$ is supported in the cotangent fibres over the  vertices of $\Delta_n$, and by the Riesz representation theorem it can be written as a $(n-1)$-form with signed measure coefficients on these cotangent fibres. We emphasize that only $dy_i$ type factors appear, and the $dx_i$ factors are killed.

Now $\partial \mathbb{L}$ is sent to $-\partial \mathbb{L}$ under the involution (\ref{involution}). Since $dy_i$ are invariant under the involution, we conclude that the measure coefficients vanish. This means that $\partial \mathbb{L}=0$ within any fixed ball. 
\end{proof}

\begin{rmk}
Geometrically, the boundary contributions from the vertices of $\Delta_n$ and $-\Delta_n$ are both nonzero, but cancel out. It is an instructive excercise to see how this works in $n=1,2$.
\end{rmk}

\begin{prop}\label{supportL}
The support of $\mathbb{L}$ is the closure of $\mathbb{L}^o$, whose projection to $T^n$ is contained in the union of $\text{Int}(\Delta_n\cup -\Delta_n)$ and the vertices of $\Delta_n$. The set of limiting points of $\mathbb{L}$ inside the cotangent fibre over the origin, is a copy of $\partial Du_n(0) \subset \R^n_y$, and similarly with the other vertices.
\end{prop}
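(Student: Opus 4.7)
The plan splits into three pieces, each building on earlier results of the paper.

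First, I would identify $\mathrm{supp}(\mathbb{L})$ with $\overline{\mathbb{L}^o}$. By definition the support is the complement of the largest open set on which $\mathbb{L}$ vanishes as a current, so the inclusion $\mathrm{supp}(\mathbb{L}) \subset \overline{\mathbb{L}^o}$ is immediate: any open set disjoint from $\overline{\mathbb{L}^o}$ is disjoint from the smooth submanifold $\mathbb{L}^o$ on which the current is defined by integration. The reverse inclusion follows because every point of $\overline{\mathbb{L}^o}$ has each of its neighbourhoods meeting $\mathbb{L}^o$ in a nonempty relatively open piece of a smooth $n$-manifold, on which the integration current acts nontrivially (test against a suitable calibrating $n$-form such as $\mathrm{Re}(e^{-\sqrt{-1}\hat\theta}\Omega)$, which restricts to the volume form on $\mathbb{L}^o$).

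Second, for the $T^n$-projection I would analyse limits. Points of $\mathbb{L}^o$ project into $\mathrm{Int}(\Delta_n \cup -\Delta_n)$ by construction. Suppose $(x_\infty, y_\infty) \in \overline{\mathbb{L}^o}$ is a limit of $(x_k, du_n(x_k))$ with $x_\infty \in \partial\Delta_n$ but $x_\infty$ not a vertex; passing to a subsequence we may assume $x_k \in \mathrm{Int}(\Delta_n)$ (the $-\Delta_n$ side is handled by applying the involution symmetry $(x,y)\mapsto(-x,y)$, under which $du_n(-x) = du_n(x)$ by the odd extension). Then Cor.\ \ref{gradientdivergence} forces $|du_n(x_k)|\to\infty$, contradicting the finiteness of $y_\infty$. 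Hence the $x$-projection of $\overline{\mathbb{L}^o}$ is contained in $\mathrm{Int}(\Delta_n\cup-\Delta_n)\cup\{\text{vertices of }\Delta_n\}$.

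Third, for the fibre over the origin I would invoke Prop.\ \ref{gradientimage} directly: the set of limit values of $du_n(x)$ as $x\in\mathrm{Int}(\Delta_n)$ tends to the origin is exactly $\partial Du_n(0)$. Every such limit lies in the fibre of $\overline{\mathbb{L}^o}$ over $0$, and by the same reflection argument (the origin is an involution-fixed vertex and $du_n(-x)=du_n(x)$) the $-\Delta_n$ side contributes no new points. The analogous statement at the other vertices follows by the same argument, using the remaining parts of Lem.\ \ref{sandwich} and Prop.\ \ref{gradientimage}.

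The only genuinely delicate step is the first identification, where one must be sure that limits of $x$-sequences leaving the domain along $\partial\Delta_n$ away from vertices cannot sneak back in as finite cotangent-fibre points — this is precisely where the gradient divergence Cor.\ \ref{gradientdivergence}, proved via the careful barrier analysis earlier in the paper, does the essential work. Everything else reduces to bookkeeping with the already established structure of $Du_n(\text{vertex})$.
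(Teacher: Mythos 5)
Your proposal is correct and follows essentially the same route as the paper: the paper's proof is the one-line observation that Cor.~\ref{gradientdivergence} rules out limit points over $\partial\Delta_n$ away from the vertices, and that the description of the fibre over the origin is a restatement of Prop.~\ref{gradientimage}. Your additional elaboration of why $\mathrm{supp}(\mathbb{L})=\overline{\mathbb{L}^o}$ (as a general fact about multiplicity-one integration currents along a smooth submanifold) and of the involution bookkeeping for the $-\Delta_n$ side is accurate and just makes the implicit steps explicit.
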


\begin{proof}
By the divergence of the gradient in Prop. \ref{gradientdivergence}, there is no limiting point over the boundary of $\Delta_n\cup -\Delta_n$ except at the vertices. The claim on the limiting points is a reformulation of Prop. \ref{gradientimage}.
\end{proof}

\subsection{Smoothness of the special Lagrangian}

Our goal is to show

\begin{prop}\label{smoothness}
The special Lagrangian current $\mathbb{L}_n$ is supported on a smooth embedded submanifold (also denoted $\mathbb{L}_n$ henceforth). 
\end{prop}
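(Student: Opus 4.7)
On the open set $\mathbb{L}^o$, smoothness is immediate: by Theorem~\ref{Existence} (together with the interior Wang--Yuan estimates cited there), $u_n$ is real analytic on $\text{Int}(\Delta_n \cup -\Delta_n)$, so its gradient graph is a real analytic embedded submanifold of $T^*T^n$. By Prop.~\ref{supportL}, the remaining points of the support of $\mathbb{L}_n$ lie in the cotangent fibres over the $(n+1)$ vertices of $\Delta_n \cup -\Delta_n$, and there they trace out copies of $\partial Du_n(\text{vertex})$. By the involution (\ref{involution}) and the cyclic symmetry among the vertices, it suffices to establish smoothness of $\mathbb{L}_n$ at an arbitrary point $p = (0, y_0)$ with $y_0 \in \partial Du_n(0)$.

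The strategy is to invoke regularity theory for area-minimizing integral currents. By Prop.~\ref{integralcurrentnoboundary}, $\mathbb{L}_n$ is a special Lagrangian integral current of constant phase with $\partial \mathbb{L}_n = 0$; being calibrated by $\text{Re}(e^{-\sqrt{-1}\hat{\theta}}\Omega)$, it is area-minimizing in local homology, and in particular is a stationary integral varifold. Allard's regularity theorem then gives $C^{1,\alpha}$-regularity of the support at any point of density one. Once $C^{1,\alpha}$ is established, one can find a Lagrangian $n$-plane in $T_p(T^*T^n)$ over which $\mathbb{L}_n$ is locally a graph, and the special Lagrangian condition becomes a uniformly elliptic fully nonlinear PDE in the graphical potential; Schauder theory combined with analytic elliptic regularity then bootstraps to $C^\infty$ and real analyticity.

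The core of the argument is therefore to show $\Theta(\mathbb{L}_n, p) = 1$, which I would do by induction on $n$. The base case $n = 2$ is the smooth complex curve $\{z_1 + z_2 = 1\} \subset (\mathbb{C}^*)^2$ from the hyperk\"ahler trick, clearly of density one everywhere. For the inductive step, split into two regimes. If some coordinate $y_{0,i}$ is sufficiently negative in the sense of Prop.~\ref{boundarychartexpdecay}, then a neighbourhood of $p$ lies inside an asymptotic chart of the form (\ref{boundarychartm}), where $\mathbb{L}_n$ is realised as an exponentially small $C^{k,\alpha}$-graph over $\mathbb{L}_m \times \R^{n-m}$ for the appropriate $m < n$; by the inductive smoothness of $\mathbb{L}_m$, this is a smooth embedded submanifold of density one, so $\Theta(\mathbb{L}_n, p) = 1$. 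For the residual ``compact portion'' of $\partial Du_n(0)$ where every $y_{0,i}$ stays bounded (a compact subset by the sandwich bound Lemma~\ref{sandwich}), I would perform a mixed partial Legendre transform adapted to the tangent hyperplane to $\partial Du_n(0)$ at $y_0$: in suitable mixed coordinates, the two sheets of $\mathbb{L}_n$ coming from $\Delta_n$ and $-\Delta_n$ are identified under the involution (\ref{involution}) into a single smooth graph, again forcing density one.

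The main obstacle will be the compact portion of $\partial Du_n(0)$, where one must verify that the $\Delta_n$-sheet and the $-\Delta_n$-sheet fuse into a single multiplicity-one smooth piece rather than meeting with a tangential singularity producing density two. The intended resolution is to choose the mixed Legendre chart so that both sheets become graphs of the same function (as forced by the involution symmetry), and then apply Savin's small perturbation theorem \cite{Savin} for fully nonlinear elliptic equations to promote this $C^0$ gluing to $C^{2,\alpha}$ regularity, after which elliptic bootstrapping completes the proof.
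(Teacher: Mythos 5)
Your overall frame is right---interior smoothness from $u_n$, reduce to the vertex fibres, and aim to verify the hypotheses of Allard's theorem there---but the mechanism you propose for establishing density one at a point $(0,\mathfrak{y})$ with $\mathfrak{y}\in\partial Du_n(0)$ has a genuine gap in both regimes. In your regime (a), the charts (\ref{boundarychartm}) from Prop.~\ref{boundarychartexpdecay} require $(x_1,\ldots,x_m)\in\Delta_m^\delta$, i.e.\ $x$ bounded away from $\partial\Delta_m$; every point of the vertex fibre has $x=0$, which is a vertex of every face, so no neighbourhood of such a point is covered by a partial Legendre chart (cf.\ Remark~\ref{localchartrmk}: the charts cover the boundary region \emph{minus} the $O(\delta)$-neighbourhood of the vertices). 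The inductive asymptote over $\mathbb{L}_{m}\times\R^{n-m}$ that would rescue this is Theorem~\ref{asymptotic}, whose proof near the vertices itself invokes Prop.~\ref{smoothness} and the Allard argument, so appealing to it here is circular.

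In regime (b), the claim that the $\Delta_n$-sheet and the $-\Delta_n$-sheet ``fuse into a single smooth graph'' in a mixed Legendre chart is precisely what must be proven, not an available hypothesis. Savin's theorem requires the object to already be realised as a $C^0$-small perturbation of a smooth solution in a graphical frame; a priori the tangent cone at $(0,\mathfrak{y})$ could be a union of distinct planes, a plane with multiplicity two, or a non-planar minimal cone, and the involution (\ref{involution}) only exchanges the two sheets---it does not force them to match to first order. The paper closes exactly this gap with minimal surface theory: Almgren's decomposition of the tangent cone into irreducible components, the constraint that the $x$-projection lies in $\R_{\geq 0}^n\cup-\R_{\geq 0}^n$, a strong maximum principle for stationary varifolds (Lemma~\ref{strongmax}) applied to the sliding hyperplanes $\{x_i=a_ix_n\}$ touching the link to force each component onto an $n$-plane, the constancy theorem, and then the multiplicity-one projection bound of Lemma~\ref{multoneprojection} (exploiting strict convexity of $u_n$ to make the projection to the mixed coordinates $(\tilde y_1,\ldots,\tilde y_{n-1},\tilde x_n)$ injective over $\{\tilde x_n>0\}$) to rule out more than one component or higher multiplicity. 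Some version of this tangent-cone classification is unavoidable before any perturbation or bootstrap argument can start.
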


By the interior smoothness of $u_n$, the essential task is to prove smoothness at the points over the vertices.  By the Allard regularity theorem, it suffices to show

\begin{prop}\label{tangentcone}
Let $(0,\mathfrak{y})\in \text{supp}(\mathbb{L})$, then any tangent cone $\mathcal{C}$ at $(0,\mathfrak{y})$ is a multiplicity one $n$-plane. In fact, it projects to a one-dimensional line in the $x$-plane. 
\end{prop}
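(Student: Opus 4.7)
The plan proceeds in three conceptual steps: obtain tangent cones with their symmetries from monotonicity; constrain the $x$-projection using Prop.~\ref{supportL}; and identify the cone as a multiplicity-one plane via a convex-analytic blowup of the graph $\mathbb{L}^o$.

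Because $\mathbb{L}$ is a special Lagrangian integral current with $\partial\mathbb{L}=0$ and locally finite mass (Prop.~\ref{integralcurrentnoboundary}), it is stationary, and the standard monotonicity formula gives a well-defined finite density $\Theta(\mathbb{L},(0,\mathfrak{y}))$. Any subsequential blowup $\mathcal{C}=\lim_k(\eta_{\lambda_k})_\sharp\mathbb{L}$ with $\eta_\lambda(p)=\lambda^{-1}(p-(0,\mathfrak{y}))$ is then a stationary integer-rectifiable $n$-cone in $\mathbb{R}^{2n}$, calibrated by the constant-coefficient calibration $\operatorname{Re}(e^{-\sqrt{-1}\hat{\theta}}\Omega_0)$ at $(0,\mathfrak{y})$, invariant under the descended involution $(X,Y)\mapsto(-X,Y)$, and satisfying $\partial\mathcal{C}=0$. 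By Prop.~\ref{supportL}, $\operatorname{supp}(\mathbb{L})$ projects into $\Delta_n\cup(-\Delta_n)\subset T^n$ near the vertex, so the $x$-projection of $\operatorname{supp}(\mathcal{C})$ sits inside the tangent cone at $0$, namely $\mathbb{R}_{\geq 0}^n\cup\mathbb{R}_{\leq 0}^n$. A short linear-algebra check shows that the only linear subspaces of $\mathbb{R}^n_x$ contained in $\mathbb{R}_{\geq 0}^n\cup\mathbb{R}_{\leq 0}^n$ are one-dimensional lines through the origin along a vector of fixed sign; hence the final sentence of the proposition will be automatic once $\mathcal{C}$ is known to be a plane.

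The heart of the argument is the identification of $\mathcal{C}$ as a multiplicity-one $n$-plane by a blowup analysis of the two graphical sheets $\mathbb{L}^o\cap\operatorname{Int}(\pm\Delta_n)$. Introduce the $1$-homogeneous convex function $f(X):=\sup_{\mathfrak{y}'\in Du_n(0)}\mathfrak{y}'\cdot X$ on $\mathbb{R}_{\geq 0}^n$ (the support function of $Du_n(0)$, equivalently the directional derivative $u_n'(0;X)$), for which the convex-analytic asymptote $u_n(x)=f(x)+o(|x|)$ holds uniformly in direction, with $\nabla u_n(ta)\to\nabla f(a)$ as $t\downarrow 0$. Parametrize the $+\Delta_n$ sheet by $x=ta$, $(t,a)\in\mathbb{R}_+\times S^{n-1}_+$. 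The Euler identity gives $\nabla f(a)=\mathfrak{y}(a)\in\partial Du_n(0)$, where $\mathfrak{y}(a)$ is the boundary point of $Du_n(0)$ with outward normal $a$ (the convex Gauss map). The constraint $|x|,|\nabla u_n(x)-\mathfrak{y}|\leq r$ therefore localizes $a$ to an angular wedge of width $O(r)$ around the direction $X_0\in S^{n-1}_+$ corresponding to $\mathfrak{y}$. Rescaling both $t$ and the transverse angular deviation by $\lambda^{-1}$, the blowup of the $+\Delta_n$ sheet is supported on the half-plane $\mathbb{R}_{\geq 0}X_0\oplus V$, where $V\subset\mathbb{R}^n_y$ is the image of the linearized Gauss map (an $(n-1)$-plane). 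The involution glues this to the reflected half-plane to give a full $n$-plane $\mathbb{R} X_0\oplus V$, and multiplicity one follows since the two open sheets have disjoint $x$-projections and only meet along $\{X=0\}$. The Lagrangian and special-Lagrangian conditions for this limit plane are automatic, and the phase is $\hat{\theta}$.

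The principal technical difficulty is handling points $\mathfrak{y}\in\partial Du_n(0)$ where the Gauss map $a\mapsto\nabla f(a)$ fails to have a unique preimage, i.e.~where $\partial Du_n(0)$ might have a non-smooth point, or where $\mathfrak{y}$ sits near infinity in the cotangent fibre. For the latter, I would appeal to the inductive exponential-decay asymptotics of Thm.~\ref{asymptotic}, which identify $\mathbb{L}_n$ locally with a small graph over $\mathbb{L}_{n-1}\times\mathbb{R}$ and reduce the tangent-cone analysis to the lower-dimensional case by induction. For possible singular points of $\partial Du_n(0)$, a rigidity argument combining the involution symmetry, the one-dimensional $x$-projection constraint of the second paragraph, and the density bound should force $\mathcal{C}$ to be a single plane; once this is achieved, Allard's theorem then concludes the proposition.
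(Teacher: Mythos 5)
Your setup (stationarity and calibration of the blowup, involution invariance, the coamoeba constraint on the $x$-projection, and the observation that the only linear subspaces of $\R^n_x$ contained in $\R_{\geq 0}^n\cup\R_{\leq 0}^n$ are lines) agrees with the paper, but the heart of your argument has a genuine gap. The convex-analytic blowup presupposes that the support function $f$ of $Du_n(0)$ is differentiable at the relevant direction and that the Gauss map $a\mapsto\nabla f(a)$ is single-valued and quantitatively injective near it --- equivalently, that $\partial Du_n(0)$ has neither a flat face nor a corner at $\mathfrak{y}$. This is precisely the scenario in which the tangent cone could fail to be a single multiplicity-one plane: a corner at $\mathfrak{y}$ produces a whole cone of directions $a$ with $\nabla f(a)=\mathfrak{y}$, hence potentially a fan of half-planes (or a plane with multiplicity $\geq 2$) in the blowup. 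And it cannot be assumed here: in the paper the regularity of $\partial Du_n(0)$ (Cor.\ \ref{graphicalnearvertexlem}) is \emph{deduced} from the smoothness of $\mathbb{L}$, which rests on Prop.\ \ref{tangentcone} via Allard, so invoking it would be circular. Your closing paragraph acknowledges the issue but only asserts that ``a rigidity argument \ldots should force $\mathcal{C}$ to be a single plane''; that unproven step is the entire content of the proposition. (Your appeal to Thm.\ \ref{asymptotic} for $|\mathfrak{y}|$ large is also delicate for the same dependency reasons, and is unnecessary: the argument should be, and in the paper is, uniform in $\mathfrak{y}$.)

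The paper closes exactly this gap with two tools for which you would need substitutes. First, after an Almgren decomposition of $\mathcal{C}$ into irreducible components (each meeting $\{x\neq 0\}$ because the cotangent fibre has the wrong phase), it sweeps the hyperplanes $\{x_i=a_ix_n\}$ to first touching and applies a strong maximum principle for stationary varifolds (Lemma \ref{strongmax}) together with connectedness of the smooth locus, concluding that \emph{each component's} $x$-projection is a single line; this is what rules out a component spreading over a higher-dimensional cone of directions at a corner of $\partial Du_n(0)$. Second, the multiplicity-one projection lemma (Lemma \ref{multoneprojection}, after Joyce): the strict convexity of $u_n$ makes the projection of $\mathbb{L}$ to the mixed coordinate plane $(\tilde{y}_1,\ldots,\tilde{y}_{n-1},\tilde{x}_n)$ injective with non-negative Jacobian over $\{\tilde{x}_n>0\}$, and this degree-at-most-one property passes to the tangent cone, killing both extra components and higher multiplicity. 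Your remark that the two sheets have disjoint $x$-projections addresses neither danger: the problem is not the $+\Delta_n$ and $-\Delta_n$ sheets overlapping, but a single sheet contributing several planes, or one plane with multiplicity two, and your proposal as written does not exclude either.
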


The following Lemma is inspired by Joyce's work \cite[Lemma 5.7]{JoyceU1}

\begin{lem}(Multiplicity one projection)\label{multoneprojection}
Let $(a_{ij})$ be an $n\times n$ matrix with $\det>0$, and the last row entries $a_{nj}\geq 0$. Let $(a^{ij})$ be the inverse matrix, and write the rotated coordinates
\[
\tilde{x}_i=a_{ij} x_j,\quad \tilde{y}_i= a^{ji} y_j.
\]
Let $\chi(\tilde{y}_1,\ldots \tilde{y}_{n-1}, \tilde{x}_n)$ be any nonnegative function on $\R^n$, compactly supported inside $\{ \tilde{x}_n>0\}$. 
Then for any  tangent cone $\mathcal{C}$ at $(0,\mathfrak{y})\in \text{supp}(\mathbb{L})$, the projection map from $\mathcal{C}$ to the $(\tilde{y}_1,\ldots \tilde{y}_{n-1}, \tilde{x}_n)$ coordinate $n$-plane  has non-negative Jacobian, and  
\begin{equation}\label{tangentconedegone}
\int_{\mathcal{C}} \chi d\tilde{y}_1\wedge\ldots d\tilde{y}_{n-1}\wedge d\tilde{x}_n \leq \int_{\R^n } \chi d\tilde{y}_1\ldots d\tilde{y}_{n-1}d\tilde{x}_n.
\end{equation}

\end{lem}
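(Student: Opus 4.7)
The plan is to reduce the statement to a concrete inequality on the smooth part of $\mathbb{L}$ and then transport that inequality to any tangent cone by dilation. First I would analyse the projection on $\mathbb{L}^o$: over $\text{Int}(\Delta_n)$ the current $\mathbb{L}^o$ is the graph of $du_n$ parametrised by $\tilde x$, and the Jacobian of the projection to the $(\tilde y_1,\ldots,\tilde y_{n-1},\tilde x_n)$-plane equals the $(n-1)\times(n-1)$ principal minor of the Hessian $(\partial^2 u_n/\partial\tilde x_i\partial \tilde x_j)_{i,j<n}$. The phase branch $\hat\theta=(n-1)\pi/2$ forces every Hessian eigenvalue to be strictly positive (since $\arctan\lambda_i<\pi/2$ for each $i$ and the sum is $(n-1)\pi/2$, none can be nonpositive), hence the minor is positive and the projection has strictly positive Jacobian on $\mathbb{L}^o\cap\{\tilde x_n>0\}$. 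The region $\text{Int}(-\Delta_n)$ lies in $\{\tilde x_n\leq 0\}$ by the assumption $a_{nj}\geq 0$, so it is disjoint from $\text{supp}(\chi)$ and contributes nothing.

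Next I would establish the integral inequality for $\mathbb{L}$ itself. For each fixed $\tilde x_n=c>0$, the slice $\Delta_n\cap\{\tilde x_n=c\}$ is a convex $(n-1)$-dimensional set, and the restriction of $u_n$ to the slice is strictly convex (a principal minor of a positive definite matrix is positive definite), so the partial gradient map $(\tilde x_1,\ldots,\tilde x_{n-1})\mapsto(\tilde y_1,\ldots,\tilde y_{n-1})$ is a diffeomorphism onto an open subset of $\R^{n-1}$. Combining this slice-wise injectivity with Fubini in $\tilde x_n$ yields
\[
\int_{\mathbb{L}^o}\chi\,d\tilde y_1\wedge\cdots\wedge d\tilde y_{n-1}\wedge d\tilde x_n\;\leq\;\int_{\R^n}\chi\,d\tilde y_1\cdots d\tilde x_n.
\]
The additional points of $\mathbb{L}\setminus \mathbb{L}^o$ lie in cotangent fibres over the vertices of $\Delta_n$, where $\tilde x_n=0$ is outside $\text{supp}(\chi)$, so the inequality extends to the current $\mathbb{L}$.

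Finally, I would transfer this to a tangent cone $\mathcal{C}$ at $p=(0,\mathfrak{y})$ by dilation. The local mass bound of Prop.\ \ref{integralcurrentnoboundary} gives uniform local mass control on $\eta_{p,r\#}\mathbb{L}$, hence subsequential weak convergence $\eta_{p,r_k\#}\mathbb{L}\to\mathcal{C}$; since $\tilde x_p=0$, the pulled-back test function $\chi\circ\eta_{p,r}$ remains compactly supported in $\{\tilde x_n>0\}$. Applying Step 2 to $\chi\circ\eta_{p,r}$ and using the scaling $\eta_{p,r}^*(d\tilde y_1\wedge\cdots\wedge d\tilde x_n)=r^{-n}\,d\tilde y_1\wedge\cdots\wedge d\tilde x_n$, the two sides of the inequality rescale identically, so the inequality holds at every scale $r$ and passes to the weak limit as $r\to 0$. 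The non-negative Jacobian of the projection on $\mathcal{C}$ then follows from Step 1, because the pushforwards $\pi_\#\eta_{p,r\#}\mathbb{L}$ are non-negative as $n$-currents on the coordinate plane, and non-negativity is preserved under weak limits. The main delicate step will be justifying the passage to the tangent cone limit (controlling the mass of rescaled currents and ensuring that the weak limits of pushforwards coincide with the pushforward of the weak limit), which relies essentially on the volume growth bound $\text{Mass}(\mathbb{L}\cap B(R))\leq CR^{n-1}$ of Prop.\ \ref{integralcurrentnoboundary}.
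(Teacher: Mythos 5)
Your proposal is correct and follows essentially the same route as the paper: injectivity of the projection over $\{\tilde{x}_n>0\}$ from the strict convexity of $u_n$ on the slices $\{\tilde{x}_n=\mathrm{const}\}$, scale invariance of the resulting integral inequality under dilation, passage to the weak limit against the fixed test form, and non-negativity of $d\tilde{y}_1\wedge\cdots\wedge d\tilde{y}_{n-1}\wedge d\tilde{x}_n$ on the oriented tangent planes surviving to the cone. (The only slip is the claim that all points of $\mathbb{L}\setminus\mathbb{L}^o$ have $\tilde{x}_n=0$ — false at the vertices $\pi e_j$ with $a_{nj}>0$ — but it is harmless since those points carry no $n$-dimensional mass and the test functions localise near $(0,\mathfrak{y})$ anyway.)
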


\begin{proof}
First, we notice that in a fixed small ball $B_{(0,\mathfrak{y})}(r)$, the projection map from
$\mathbb{L}\cap B_{(0,\mathfrak{y})}(r)$ to the $\R^n$ plane with the $\tilde{y}_1,\ldots \tilde{y}_{n-1}, \tilde{x}_n$ variables, is an injective map over $\{ \tilde{x}_n>0\}$. 
To see this, observe that $a_{nj}\geq 0$ and $x\in \Delta_n\cup -\Delta_n$ forces the preimage to lie on the part of $\mathbb{L}$ defined by the graph of $du_n$ over $\text{Int}({\Delta}_n)$. In these rotated coordinates, the Lagrangian $\mathbb{L}$ satisfies $\tilde{y}_i= \frac{\partial u_n}{\partial \tilde{x}_i}$ for $i=1,\ldots n$, so by the strict convexity of $u_n$, the preimage is unique. This shows
\begin{equation*}
\int_{\mathbb{L}\cap  B_{(0,\mathfrak{y})}(r) } \chi d\tilde{y}_1\wedge\ldots d\tilde{y}_{n-1}\wedge d\tilde{x}_n \leq \int_{\R^n } \chi d\tilde{y}_1\ldots d\tilde{y}_{n-1}d\tilde{x}_n.
\end{equation*}
for any non-negative function  $\chi(\tilde{y}_1,\ldots \tilde{y}_{n-1}, \tilde{x}_n)$ on $\R^n$, compactly supported inside $\{ \tilde{x}_n>0\}$. 
The same inequality applies to any rescaling of $\mathbb{L}$ around $(0,\mathfrak{y})$, where the ball $B_{(0,\mathfrak{y})}(r)$ needs to be dilated along with $\mathbb{L}$. As the dilation scale goes to infinity, we can pass the inequality to the limit to deduce the tangent cone statement (\ref{tangentconedegone}).

Morever by the special Lagrangian graph equation and the orientation convention for $\mathbb{L}$,  the $n$-form $d\tilde{y}_1\wedge\ldots d\tilde{y}_{n-1}\wedge d\tilde{x}_n$ is non-negative on $\mathbb{L}\cap  B_{(0,\mathfrak{y})}(r)$. This property also passes to the tangent cone, and implies the non-negativity of the  Jacobian of the projection $\mathcal{C}\to \R^n$. 
\end{proof}

We now prove Prop. \ref{tangentcone}.

\begin{proof}
Let $\mathcal{C}$ denote a tangent cone at $(0,\mathfrak{y})$, then $\mathcal{C}$ is a special Lagrangian current of phase $\hat{\theta} $, hence locally area minimizing. By Almgren's codimension two regularity, $\mathcal{C}$ can be decomposed into irreducible components, which are individually locally area minimizing  currents without boundary, such that the smooth locus of each component is path connected. Clearly each component $\mathcal{C}'$ is also a minimal cone, with the cone apex at the origin inside $T_{(0,\mathfrak{y})}(T^*T^n)$.

Since the support of $\mathbb{L}$ is contained in the coamoeba, we know the $x$-projection of the tangent cone $\mathcal{C}$ is contained in the union of the two quadrants $\R_{\geq 0}^n \cup - \R_{\geq 0}^n $. Clearly the support of $\mathcal{C}$ is invariant under the involution (\ref{involution}). Notice that the cotangent fibre $\R^n_y$ is a special Lagrangians with phase $\frac{n}{2}\pi$, which is different from $\hat{\theta}= \frac{n-1}{2}\pi$. Thus $\mathcal{C}$ has no $n$-dimensional component contained inside $\R^n_y$, so each component $\mathcal{C}'$ contains some point with $x$-coordinate in $ \R_{\geq 0}^n\setminus \{ 0\}$. Without loss the $n$-th coordinate is positive.

Consider the link of $\mathcal{C}'\cap \{ x_i\geq 0, \forall i\}$. For each $i=1,\ldots n-1$, as the parameter $a_i$ increases from zero to infinity, the family of hyperplanes $\{ a_i x_n=x_i\}$ must first touch this link at some parameter value $a_i$. 
 In a unit ball around the point of touching, $\mathcal{C}'$ locally lies inside the half space $\{ x_i-a_ix_n\geq 0 \}$, and the touching point lies on the boundary of the half space. By the strong maximum principle (\cf Prop. \ref{strongmax}), $\mathcal{C}'$ has nontrivial $n$-dimensional measure inside the hyperplane $\{ x_i= a_i x_n\}$, so by the path connectedness of its smooth locus, the whole component $\mathcal{C}'$ lies inside the intersection of the hyperplanes $\{ x_i= a_i x_n\}$, for $i=1,\ldots n-1$.
Thus the $x$-projection of $\mathcal{C}'$ is the line along the direction $\sum_1^{n-1} a_i \partial_{x_i}+ \partial_{x_n}$. Since $\mathcal{C}'$ is a Lagrangian cone, this implies $\sum_1^{n-1} a_iy_i+y_n=\text{const}$. In summary, any tangent cone component $\mathcal{C}'$ is supported on some $n$-plane of the form 
\begin{equation}\label{tangentspace}
(x_1,\ldots x_n)\in \R\text{-span}(a_1,\ldots a_n) ,\quad \sum_1^n a_iy_i=0,\quad (a_i)\in \R_{\geq 0}^n\setminus \{ 0\}.
\end{equation}
By the constancy theorem, $\mathcal{C}'$ is some integer multiple of the $n$-plane, and $\mathcal{C}$ is a sum of such components.

Now by Lemma \ref{multoneprojection}, for any choice of the parameter matrix 
$(a_{ij})$, the projection map of $\mathcal{C}$ into the $(\tilde{y}_1,\ldots \tilde{y}_{n-1}, \tilde{x}_n)$ coordinate $n$-plane has non-negative Jacobian, and the projection degree (counting multiplicity) is at most one. 
This forces $\mathcal{C}$ to have only one component, with multiplicity one.
\end{proof}

We can be a little more precise on the local structure.

\begin{cor}\label{graphicalnearvertexlem}
The tangent plane at any given point $(0,\mathfrak{y})\in \text{supp}(\mathbb{L})$ is of the form 
\[
(x_1,\ldots x_n)\in \R\text{-span}(a_1,\ldots a_n) ,\quad \sum_1^n a_i(y_i-\mathfrak{y}_i)=0,\quad (a_i)\in \R_{\geq 0}^n\setminus \{ 0\}.
\]
Suppose $a_n=1$. Take the $n\times n$ matrix 
\[
(a_{ij})= 
\begin{pmatrix}
1 & 0 & \cdots & -a_1 \\
0 & 1 & \cdots & -a_2 \\
\vdots  & \vdots  & \ddots & \vdots  \\
0 & 0 & \cdots & 1 
\end{pmatrix}
\]
with inverse matrix $(a^{ij})$,  and write the rotated coordinates 
\[
\tilde{x}_i=a_{ij} x_j,\quad \tilde{y}_i= a^{ji} y_j.
\]
Then there are real analytic functions $f_i$ 
defined on a small coordinate ball around the origin in $\R^n$, with $f_i(0)=0$, such that around $(0,\mathfrak{y})$ the special Lagrangian is locally given by
\begin{equation}\label{graphicalnearvertex}
\begin{cases}
   \tilde{x}_i= \tilde{x}_n f_i( \tilde{x}_n, \tilde{y}_1- \tilde{\mathfrak{y}}_1,\ldots \tilde{y}_{n-1}- \tilde{\mathfrak{y}}_{n-1}),\quad i=1,\ldots n-1.
   \\
   \tilde{y}_n= \tilde{\mathfrak{y}}_n +f_n(\tilde{x}_n, \tilde{y}_1- \tilde{\mathfrak{y}}_1,\ldots \tilde{y}_{n-1}- \tilde{\mathfrak{y}}_{n-1}).
\end{cases}
\end{equation}
In particular, the boundary of the convex set $\partial Du_n(0)$ is locally given by 
\[
 \tilde{y}_n=\tilde{\mathfrak{y}}_n +f_n(\tilde{x}_n, \tilde{y}_1- \tilde{\mathfrak{y}}_1,\ldots \tilde{y}_{n-1}- \tilde{\mathfrak{y}}_{n-1})
\]
and is therefore a \emph{real analytic hypersurface} in the cotangent fibre $\R^n_y$.

\end{cor}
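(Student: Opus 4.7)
The plan is to extract the tangent plane structure already provided by Prop. \ref{tangentcone}, upgrade the current to a smooth graph over that plane via Allard regularity, bootstrap to real analyticity using the ellipticity of the special Lagrangian equation in mixed position--momentum coordinates, and finally factor out $\tilde{x}_n$ using the constraint that $\mathrm{supp}(\mathbb{L})$ projects into the closed coamoeba.

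First, I observe that the description of the tangent plane in the statement is just a rewriting of equation (\ref{tangentspace}) from the proof of Prop. \ref{tangentcone}, translated to the point $(0,\mathfrak{y})$. After relabelling indices I may assume $a_n > 0$ and rescale so that $a_n = 1$. In the rotated coordinates specified in the statement, the tangent plane becomes $\{\tilde{x}_1=\cdots=\tilde{x}_{n-1}=0,\ \tilde{y}_n=\tilde{\mathfrak{y}}_n\}$, which is naturally the graph over the $(\tilde{x}_n,\tilde{y}_1,\ldots,\tilde{y}_{n-1})$-coordinate $n$-plane through $(0,\tilde{\mathfrak{y}})$. Since $\mathbb{L}$ is a stationary integral current (Prop. \ref{integralcurrentnoboundary}) with a multiplicity one tangent plane at $(0,\mathfrak{y})$ (Prop. \ref{tangentcone}), Allard's regularity theorem produces $C^{1,\alpha}$ functions $F_1,\ldots,F_{n-1},G$ in a neighbourhood of the origin with $F_i(0)=0$, $G(0)=\tilde{\mathfrak{y}}_n$, and vanishing first derivatives at $0$, such that locally $\mathbb{L}$ is the image of $(\tilde{x}_n,\tilde{y}_1,\ldots,\tilde{y}_{n-1})\mapsto (F_1,\ldots,F_{n-1},\tilde{x}_n,\tilde{y}_1,\ldots,\tilde{y}_{n-1},G)$.

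Second, I upgrade these $C^{1,\alpha}$ functions to real analytic. The symplectic coordinate change preserves the special Lagrangian equation, and expressed in the mixed coordinates $(\tilde{x}_n,\tilde{y}_1,\ldots,\tilde{y}_{n-1})$ the special Lagrangian condition becomes a second order elliptic system for $(F_1,\ldots,F_{n-1},G)$, of the same general type as the partial Legendre transform appearing in Prop. \ref{boundarychartexpdecay}. Ellipticity is automatic: it corresponds to the fact that the tangent plane is non-characteristic, i.e.\ lies transversely to the cotangent fibre coordinates $(\tilde{y}_n,\tilde{x}_1,\ldots,\tilde{x}_{n-1})$, which by construction it does. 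Since this elliptic system has real analytic nonlinearity, Morrey's analyticity theorem for nonlinear elliptic systems lifts the $C^{1,\alpha}$ solution to real analyticity on a slightly smaller neighbourhood of the origin.

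Third, I factor out $\tilde{x}_n$. By Prop. \ref{supportL}, the $x$-projection of $\mathrm{supp}(\mathbb{L})$ is contained in $\overline{\mathrm{Int}(\Delta_n\cup -\Delta_n)}\cup\{\mathrm{vertices}\}$, and near the origin the only point of this set with $x_n=\tilde{x}_n=0$ is $x=0$. Consequently on the slice $\{\tilde{x}_n=0\}$ of the local graph every $\tilde{x}_i$ must vanish, i.e.\ $F_i(0,\tilde{y}_1,\ldots,\tilde{y}_{n-1})\equiv 0$ for $i<n$. By real analyticity, $F_i=\tilde{x}_n\, f_i$ with $f_i$ real analytic, and setting $f_n := G-\tilde{\mathfrak{y}}_n$ produces exactly (\ref{graphicalnearvertex}). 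The final claim that $\partial Du_n(0)$ is a real analytic hypersurface then follows by restricting (\ref{graphicalnearvertex}) to $\tilde{x}_n=0$, since Prop. \ref{supportL} identifies this slice of $\mathbb{L}$ with a local piece of $\partial Du_n(0)$ inside the cotangent fibre over the vertex. The main subtle step I expect is the verification that Morrey's theorem applies cleanly, which amounts to writing out the special Lagrangian equation in the mixed coordinates as a quasilinear elliptic system with analytic symbol and confirming non-degeneracy along the chosen tangent plane; everything else is essentially a repackaging of results already established.
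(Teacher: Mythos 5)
Your proposal is correct and follows essentially the same route as the paper: read off the tangent plane from the tangent cone of Prop.~\ref{tangentcone}, use Allard regularity (which is exactly how the paper obtains Prop.~\ref{smoothness}) plus the standard Morrey analyticity of minimal graphs to write $\mathbb{L}$ as a real analytic graph over the $(\tilde{x}_n,\tilde{y}_1,\ldots,\tilde{y}_{n-1})$-plane, and then factor out $\tilde{x}_n$ from the coamoeba support constraint, identifying the $\tilde{x}_n=0$ slice with $\partial Du_n(0)$ via Prop.~\ref{supportL}. The only difference is that you unroll the smoothness and analyticity steps that the paper delegates to Prop.~\ref{smoothness} and to the general fact that smooth special Lagrangians in Euclidean space are real analytic.
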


\begin{proof}
The tangent space of $\mathbb{L}$ can be read off from the tangent cone (\ref{tangentspace}), after translating the origin back to $(0, \mathfrak{y})$. In the rotated coordinates, the tangent space is simply 
\[
\tilde{x}_1=\ldots =\tilde{x}_{n-1}=0,\quad \tilde{y}_n=\tilde{\mathfrak{y}}_n.
\]
Since $\mathbb{L}$ is a smooth special Lagrangian in the Euclidean space, it must be a real analytic submanifold, so locally around $(0,\mathfrak{y})$ it can be written as the graph of an analytic vector-valued function over the tangent space. Morever, when $\tilde{x}_n=x_n=0$, then the point on the special Lagrangian lies over the vertex of $\Delta_n$ at the origin, so $\tilde{x}_i=0$ for all $i=1,\ldots n-1$. Thus the Taylor expansion of the real analytic function $\tilde{x}_i$ in the $\tilde{x}_n$ variable has no constant term, so we obtain the graphical representation (\ref{graphicalnearvertex}). From the tangent cone information, we can read off $f_i(0)=0$.

The claim on $\partial Du_n(0)$ follows by recalling from Prop. \ref{supportL} that the part of $\mathbb{L}$ inside the cotangent fibre $\R^n_y$ over the origin, is a copy of $\partial Du_n(0)$.
\end{proof}

\subsection{Asymptotic geometry}\label{asymptoticproof}

We now prove Theorem \ref{asymptotic} by induction on $n$. We already know the $n=2$ case from the explicit solution, so we will assume that $n\geq 3$ and the statement holds already for all lower dimensions. In particular, the $C^{k,\alpha}$ topology makes sense on $\mathbb{L}_{n-1}.$

\begin{lem}\label{awayfromvertex}
There is some $C(\delta), C(\delta)'$ large enough depending on $n,g,\delta$, such that the portion of the special Lagrangian
\[
\mathbb{L}_n\cap \{ y_n\leq -C(\delta)' \} \cap \{ \text{dist}(x, \text{vertices})\gtrsim \delta  \}
\]
lies on a $C^{k,\alpha}$-small graph over $\mathbb{L}_{n-1}\times \R_{y_n}\subset T^*T^{n-1}\times T^*S^1=T^*T^n$. The local $C^{k,\alpha}$-norm of the graph is bounded by $C(\delta) e^{ y_n/C(n)}$, where the exponential decay rate $C(n)$ is independent of $\delta$.

\end{lem}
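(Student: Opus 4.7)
The plan is to combine the partial Legendre chart estimates of Proposition \ref{boundarychartexpdecay} (applied to both $\mathbb{L}_n$ and, inductively, to $\mathbb{L}_{n-1}$) with an integration argument that upgrades the $\max_{j>m} e^{y_j/C}$ bounds into the sharper $O(e^{y_n/C(n)})$ decay the statement demands. To set up the charts, I first note that for any $p\in \mathbb{L}_n$ in the region of interest, the gradient divergence (Cor. \ref{gradientdivergence}) forces $|x_n(p)|$ to be exponentially small in $y_n(p)$. Since $x(p)$ is at distance $\gtrsim \delta$ from every vertex of $\Delta_n$, after a suitable relabeling of the first $n-1$ coordinates there is an integer $m\leq n-1$ with $(x_1,\ldots,x_m)\in \Delta_m^{c\delta}$ while $x_{m+1},\ldots,x_{n-1}$ are all small; by Lem. \ref{gradientatworstlog}, the corresponding $y_{m+1},\ldots,y_{n-1}$ are then very negative together with $y_n$. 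Prop. \ref{boundarychartexpdecay} with this $m$ produces the partial Legendre chart $(x_1,\ldots,x_m,y_{m+1},\ldots,y_n)$ on $\mathbb{L}_n$ with potential $u_{n,m}$, and the inductive hypothesis furnishes the analogous chart on $\mathbb{L}_{n-1}$ with potential $u_{n-1,m}(x_1,\ldots,x_m,y_{m+1},\ldots,y_{n-1})$ depending on one fewer variable.

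Second, to extract the sharp rate in $y_n$ alone, I would integrate in the $y_n$ direction. Setting $w:=u_{n,m}-u_{n-1,m}$ (with $u_{n-1,m}$ extended trivially in $y_n$), the Legendre definition forces $w\to 0$ as $y_n\to -\infty$, since the minimization over $x_n$ in the Legendre transform of $u_n$ collapses to $x_n=0$ in the limit. Combined with $\partial_{y_n} w = \partial_{y_n} u_{n,m}=-x_n$ and the bound $\|\partial_{y_n}u_{n,m}\|_{C^{k,\alpha}}\leq C(\delta) e^{y_n/C(n)}$ from Prop. \ref{boundarychartexpdecay}, the identity
\[
w(x,y_{m+1},\ldots,y_n) \,=\, \int_{-\infty}^{y_n} \partial_{y_n} u_{n,m}(x,y_{m+1},\ldots,y_{n-1},s)\,ds
\]
yields $\|w\|_{C^{k,\alpha}}\leq C(\delta)\,e^{y_n/C(n)}$ after commuting integration with differentiation in the remaining variables. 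Critically, the exponential rate $C(n)$ is independent of $\delta$.

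Third, I would translate this PDE bound into the geometric graph. In the chart $(x_1,\ldots,x_m,y_{m+1},\ldots,y_n)$ both $\mathbb{L}_n$ and $\mathbb{L}_{n-1}\times\R_{y_n}$ are parametrized by the same coordinates, with potentials $u_{n,m}$ and $u_{n-1,m}$ respectively, so the $C^{k,\alpha}$-bound on $w$ together with $x_n=-\partial_{y_n}u_{n,m}=O(e^{y_n/C(n)})$ expresses $\mathbb{L}_n$ as a $C^{k,\alpha}$-small graph over $\mathbb{L}_{n-1}\times\R_{y_n}$ with normal vector field of norm $\leq C(\delta)e^{y_n/C(n)}$. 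The corresponding statement on the $-\Delta_n$ side follows from the involution $(x,y)\mapsto(-x,y)$, which preserves both supports. The principal obstacle is the integration step: I must verify that the exponential $C^{k,\alpha}$-bound on $\partial_{y_n}u_{n,m}$ extends uniformly in $s$ all the way down to $-\infty$, which requires the chart of Prop. \ref{boundarychartexpdecay} to be available on the entire half-ray $\{y_n\leq -C(\delta)\}$; a secondary subtlety is that the appropriate $m$ depends on which stratum of $\partial\Delta_{n-1}$ the tangential coordinates approach, so the graphical descriptions in different charts must be reconciled, which works because the exponential rate $C(n)$ is common across charts and only the prefactor $C(\delta)$ absorbs chart-transition losses.
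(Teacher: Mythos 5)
Your chart setup is the same as the paper's: both proofs compare the partial Legendre potentials $u_{n,m}$ and $u_{n-1,m}$ (the latter extended trivially in $y_n$, so that its graph is $\mathbb{L}_{n-1}\times\R_{y_n}$) on the charts of Prop.~\ref{boundarychartexpdecay} and Remark~\ref{localchartrmk}. Where you genuinely diverge is the mechanism for the sharp rate in $y_n$. The paper first establishes the $C^0$ bound $|u_{n,m}-u_{n-1,m}|\leq C'e^{y_n/C}$ by the argument of Lemma~\ref{expdecay1}, then writes $0=\arctan D^2u_{n,m}-\arctan D^2u_{n-1,m}$ as a uniformly elliptic equation on $v=u_{n,m}-u_{n-1,m}$ via the integral form of the mean value theorem for $\arctan$ of matrices, and bootstraps the $C^0$ decay to $C^{k,\alpha}$ by Schauder theory. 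You instead integrate the second estimate of Prop.~\ref{boundarychartexpdecay}, $\norm{\partial_{y_n}u_{n,m}}_{C^{k,\alpha}}\leq C(\delta)e^{y_n/C}$, along the $y_n$-ray. Both routes are valid; yours avoids setting up a second elliptic equation, at the price of leaning entirely on that derivative estimate (which was itself obtained by exactly the kind of bootstrap the paper re-runs here), so the two arguments are close in substance even though the last step is organized differently. One point you should make explicit: to write $D^\beta w=\int_{-\infty}^{y_n}D^\beta\partial_{y_n}u_{n,m}\,ds$ for tangential multi-indices $\beta$ you need the constant of integration to vanish, \ie $D^\beta w\to 0$ as $y_n\to-\infty$. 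This does follow from $w\to 0$ in $C^0$ (your qualitative Legendre-collapse argument, which should be made quantitative along the lines of Lemma~\ref{expdecay1}) together with the uniform $C^{k+1,\alpha}$ bounds on both potentials, by interpolation or by noting that a nonzero limit of a tangential derivative is incompatible with uniform $C^0$ convergence of $w$ to zero; but it is not automatic and deserves a sentence. With that supplied, the integration preserves the rate, and your observation that $C(n)$ is $\delta$-independent is correct since it enters only through the bound $x_n\leq e^{y_n/C}$ of Lemma~\ref{expdecay1}. Finally, your ``relabeling of the first $n-1$ coordinates'' should also accommodate the face $\{\sum_1^{n-1}x_i=\pi\}$ of $\Delta_{n-1}$ via an affine change of variables; this is the same level of informality as the paper's Remark~\ref{localchartrmk}, so it is a cosmetic rather than substantive gap.
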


\begin{proof}
By Lemma \ref{expdecay1}, the $x_n$ coordinate is exponentially small when $y_n$ is very negative. Recall from Remark \ref{localchartrmk} that for any given small $\delta>0$, we used the partial Legendre transform $u_{n,m}$ to assign smooth charts for $\mathbb{L}_n$, except in the $O(\delta)$-neighbourhood around the vertices.

Now $\mathbb{L}_{n-1}$ is also assigned with very similar coordinate charts by the partial Legendre transform $u_{n-1, m}$ for $u_{n-1}$ involving fewer variables. We regard $u_{n-1,m}$ as a function of $n$ variables $x_1,\ldots , x_m, y_{m+1},\ldots y_n$, with no actual dependence on the $y_n$ coordinate. Geometrically, this corresponds to taking the product of $\mathbb{L}_{n-1}$ with $\R_{y_n}$. By construction \[
\arctan D^2 u_{n,m}= \arctan D^2u_{n-1,m}= \frac{(m-1)}{2}\pi.
\] 
We comment that for $m=n-1$, the function $u_{n-1,m}$ is just $u_{n-1}$, and the corresponding chart is meant to cover the interior region in $\Delta_{n-1}$. Our task is to compare $u_{n,m}$ with $u_{n-1,m}$ on the corresponding charts.
By Prop. \ref{boundarychartexpdecay} both functions have higher derivative estimates depending on $\delta$ in the chart, and the main task is to prove the exponential decay estimate.

Let $v=u_{n,m}- u_{n-1,m}$. We can show 
\[
|v|\leq C' e^{y_n/C}
\]
by the same argument as in Lemma \ref{expdecay1}. We will then derive an elliptic equation on $v$. Applying the fundamental theorem of calculus to the function $\arctan $ on symmetric matrices,
\[
\arctan (M+N)-\arctan M= \int_0^1 \Tr ((M+tN)^2 +I)^{-1}   N) dt,
\]
hence
\[
0= \arctan D^2u_{n,m}- \arctan D^2 u_{n-1, m}= \int_0^1 \Tr ((D^2 u_{n-1,m}+tD^2 v)^2 +I)^{-1}D^2 v ) dt.  
\]
Since $D^2 v$ and $D^2 u_{n-1,m}$ are already bounded, this is a uniformly elliptic equation, so the $C^0$ exponential decay on $v$ can be bootstrapped to all higher derivatives. This implies the geometric statement.
\end{proof}

We will need a quantitative version of Allard regularity theorem \cite{Allard}.

\begin{prop}
    (Allard regularity)
There is a small universal constant $\epsilon_0\ll 1$ depending only on $n, N$ such that the following holds,
Let $X$ be an $n$-dimensional multiplicity one stationary integral varifold inside $B(p,r) \subset \R^N$. Assume that $p$ lies on the support of $X$, and the volume $\mathcal{H}^n(X\cap B(p,r)) \leq \omega_n(1+\epsilon_0)$, then 
$X\cap B(p, r/2)$ is a $C^{1,\alpha}$ graph over the tangent plane through $p$, with $C^{1,\alpha}$ norm bounded by $\frac{1}{100}$.
\end{prop}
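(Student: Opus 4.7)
The plan is to follow Allard's original three-step strategy: monotonicity, Lipschitz/harmonic approximation, and iterated tilt-excess decay. After rescaling, assume $p=0$ and $r=1$, and normalize so the mass in $B_1$ satisfies $\omega_n\le \mathcal{H}^n(X\cap B_1)\le \omega_n(1+\epsilon_0)$ (the lower bound follows from monotonicity applied at $p$, since $p\in \operatorname{supp} X$).

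First I would invoke the monotonicity formula for stationary integral varifolds: the density ratio $r^{-n}\mathcal{H}^n(X\cap B(q,r))$ is nondecreasing in $r$ for each $q\in\operatorname{supp} X$. Combined with the upper mass bound in $B_1$ and the triangle inequality, this forces every $q\in \operatorname{supp} X\cap B_{3/4}$ to have density ratio in all scales $r\in(0,1/8)$ lying in the window $[1,1+C\epsilon_0]$. The excess of $1$ over this window governs a weighted $L^2$ deviation of the tangent planes of $X$ from any fixed $n$-plane $P$, by the square of the tilt integrated against the monotonicity measure; this gives a small $L^2$ tilt-excess $E(q,r,P)\lesssim \epsilon_0$ on all balls $B(q,r)\subset B_{3/4}$.

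Next I would produce a Lipschitz approximation: on any ball where $E\le\eta$ with $\eta$ small, the set of "good" points in $\operatorname{supp} X$ where all small-scale densities are close to $1$ covers all but a tiny volume fraction, and on this good set the varifold is a graph of a Lipschitz function $f$ over $P$ with $\operatorname{Lip}(f)\le \eta^{1/2}$, by a standard covering/projection argument using monotonicity to control how much mass can be "missed." The stationarity then yields that $f$ weakly almost solves a minimal surface system, so on slightly smaller balls $f$ is $L^2$-close to a harmonic function $h$; elliptic regularity for $h$ gives decay of the linear excess at a fixed geometric rate, and this transfers (up to a small error controlled by $E^{1+\gamma}$) to a decay for the tilt-excess of $X$ at the same scale.

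Iterating this excess-decay at dyadically shrinking balls around each $q\in\operatorname{supp} X\cap B_{1/2}$, provided $\epsilon_0$ was chosen below the threshold required to preserve the smallness condition at every scale, gives $E(q,r,P_q)\le C r^{2\alpha}$ for a sequence of affine planes $P_q$ whose Gauss maps form a Cauchy sequence at a Hölder rate. Standard graph-parametrization arguments then assemble the $q$-wise graph pieces into a single $C^{1,\alpha}$ graph of a function $F$ over the tangent plane through $p$, with $\|F\|_{C^{1,\alpha}(B_{1/2})}\le C\epsilon_0^{1/2}\le \frac{1}{100}$ after shrinking $\epsilon_0$ if necessary. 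The main obstacle is the Lipschitz-approximation step: showing quantitatively that the non-graphical portion of $X$ has mass $\lesssim \eta\cdot r^n$, and simultaneously that the Lipschitz constant and weak-harmonicity defect are controlled by powers of the excess—this is the technical heart where the stationarity hypothesis, via the first variation formula applied to cleverly chosen cutoff vector fields, must be converted into pointwise/integral estimates on the graphing function.
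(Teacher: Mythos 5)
The paper does not prove this proposition at all: it is stated as a known quantitative form of Allard's regularity theorem and is simply cited from Allard's 1972 paper. Your proposal is therefore an attempt to reprove Allard's theorem from scratch, following the standard strategy (monotonicity formula, Lipschitz approximation, harmonic approximation, iterated tilt-excess decay). As an outline of that strategy it is accurate, but it is only an outline: the Lipschitz approximation lemma, the harmonic approximation lemma, and the tilt-excess decay lemma are named rather than proved, and those three lemmas \emph{are} the content of the theorem. For the purposes of this paper the honest "proof" is the citation.

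Beyond the issue of level of detail, one step as you state it is genuinely wrong. You claim that the upper mass bound in $B_1$, monotonicity, and the triangle inequality force every $q\in \operatorname{supp}(X)\cap B_{3/4}$ to have density ratio in $[1,1+C\epsilon_0]$ at all scales $t\le 1/8$. The elementary bound one actually gets this way is
\[
\frac{\mathcal{H}^n(X\cap B(q,t))}{\omega_n t^n}\;\le\;\frac{(1+\epsilon_0)-(d-t)^n}{t^n},\qquad d=|q-p|,
\]
using that $B(p,d-t)$ is disjoint from $B(q,t)$ and carries mass at least $\omega_n(d-t)^n$; since $(d-t)^n+t^n\le d^n\le (3/4)^n<1$, this is nowhere near $1+C\epsilon_0$ for $t$ bounded away from $0$, and says nothing at all as $t\to 0$ beyond the trivial bound. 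Smallness of the density ratio (equivalently, of the height/tilt excess) on interior balls centered at points $q\ne p$ is a \emph{consequence} of the excess-decay iteration, not an input obtained from the hypotheses by soft arguments; indeed, knowing the density ratio is within $\epsilon_0$ of $1$ at all scales at $q$ is essentially equivalent to regularity at $q$. Relatedly, the standard statement of Allard's theorem yields a $C^{1,\alpha}$ graph only on $B(p,\gamma r)$ for a dimensional constant $\gamma\in(0,1)$, and upgrading this to radius $r/2$ requires exactly the interior propagation of the excess bounds that your first step assumes. If you want a self-contained proof you should restructure it so that only the density ratio at $p$ at scale $1$ is used as a hypothesis, derive smallness of the height excess relative to a well-chosen plane (not an arbitrary one) from the monotonicity remainder term, and let the decay lemma carry that smallness to interior points and smaller scales.
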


It remains to have quantitative control of the geometry in the neighbourhood of the vertices. 
%We focus on the neighbourhood of $0\in \Delta_n$.
The same argument as in Prop. \ref{integralcurrentnoboundary} gives the following:

\begin{lem}\label{vertexvolume}
Take any point $p\in \mathbb{L}_n$, and let $\delta\lesssim r\leq 1$. Then 
\[
\text{Vol}(\mathbb{L}_n\cap \{ \text{dist}(x, \text{vertex})\lesssim \delta  \} \cap B(p, r) )\leq Cr^{n-1}\delta.
\]

\end{lem}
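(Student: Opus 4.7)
The plan is to imitate the volume computation from Proposition \ref{integralcurrentnoboundary}, now additionally exploiting the $x$-localisation near the vertex to extract the extra factor of $\delta$. After the linear change of coordinates discussed in Section \ref{specialLaggrapheqnsection}, I may assume $g_{ij}=\delta_{ij}$; the vertices of $\Delta_n\cup -\Delta_n$ are well-separated in $T^n$, so it suffices to handle one vertex at a time, which I translate to the origin. The cotangent fibre portion of $\text{supp}(\mathbb{L}_n)$ over the vertex is the $(n-1)$-dimensional real analytic hypersurface $\partial Du_n(0)$ by Proposition \ref{supportL} and Corollary \ref{graphicalnearvertexlem}, so it contributes zero to the $n$-dimensional volume, and I may replace $\mathbb{L}_n$ by $\mathbb{L}^o$ in the integral.

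Since $\mathbb{L}^o$ is calibrated by $\text{Re}(e^{-\sqrt{-1}\hat{\theta}}\Omega)$, I would start from
\[
\text{Vol}(\mathbb{L}_n\cap U)=\int_{\mathbb{L}^o\cap U}\text{Re}\bigl(e^{-\sqrt{-1}\hat{\theta}}\Omega\bigr),\qquad U=\{|x|\lesssim\delta\}\cap B(p,r),
\]
and expand $\Omega$ into a finite weighted sum of monomial integrals $\int dy_I\wedge dx_{I^c}$ indexed by subsets $I\subset\{1,\ldots,n\}$. As already observed in Proposition \ref{integralcurrentnoboundary}, the phase $\hat{\theta}=\tfrac{n-1}{2}\pi$ kills the pure $dy_1\wedge\cdots\wedge dy_n$ term, so $I^c\neq\emptyset$ in every surviving monomial, i.e.\ at least one $dx_i$-factor always appears.

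For each surviving monomial I would project $\mathbb{L}^o\cap U$ onto the coordinate $n$-plane with variables $(y_I,x_{I^c})$. By the strict convexity of $u_n$ on each of $\text{Int}(\pm\Delta_n)$, on every slice $\{x_{I^c}=\mathrm{const}\}$ the partial gradient $x_I\mapsto(\partial_{x_i}u_n)_{i\in I}$ is injective, so the projection is at most two-to-one (one sheet from each of $\pm\Delta_n$); its image lies inside $\{|y_i|\leq r:i\in I\}\times\{|x_i|\lesssim\delta:i\in I^c\}$, which has Lebesgue measure $\lesssim r^{|I|}\delta^{n-|I|}$. Setting $k=|I|\leq n-1$ and using $\delta\lesssim r\leq 1$, each monomial contributes at most
\[
Cr^{k}\delta^{n-k}=Cr^{n-1}\delta\cdot\Bigl(\frac{\delta}{r}\Bigr)^{n-k-1}\leq Cr^{n-1}\delta,
\]
and summing over the finitely many surviving monomials yields the bound. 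The only input beyond Proposition \ref{integralcurrentnoboundary} is the $x$-localisation $|x|\lesssim\delta$, which is passed transparently through the $x_{I^c}$-factor of the projection, so I do not anticipate any substantive obstacle.
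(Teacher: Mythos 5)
Your proposal is correct and is precisely the argument the paper invokes when it says "the same argument as in Prop.~\ref{integralcurrentnoboundary} gives the following": expand the calibrating form into monomials $dy_I\wedge dx_{I^c}$ with $I^c\neq\emptyset$, project to the corresponding coordinate $n$-plane using strict convexity of $u_n$ on each of $\pm\Delta_n$, and bound by the measure $\lesssim r^{|I|}\delta^{n-|I|}\leq r^{n-1}\delta$ of the image box. The details you supplied (handling one vertex at a time, discarding the $(n-1)$-dimensional fibre over the vertex, the at-most-two-to-one degree, and the monotonicity $r^{k}\delta^{n-k}\leq r^{n-1}\delta$ for $k\leq n-1$, $\delta\lesssim r$) are all sound and are exactly what the paper leaves implicit.
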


We now prove Part 1 of Theorem \ref{asymptotic} on regularity.

\begin{proof}
By the $C^{k,\alpha}$ regularity of $\mathbb{L}_{n-1}$ in the inductive hypothesis, we have some radius parameter $r_0<1$ depending on $n,g$, such that the volume ratio for $\mathbb{L}_{n-1}\times \R$ in any ball of radius $\leq r_0$ around any point on $\mathbb{L}_{n-1}\times \R$, is bounded by $1+ \frac{\epsilon_0}{3}$, where $\epsilon_0$ is the Allard regularity constant.

We choose $\delta\ll \epsilon_0r_0$, and consider the points  $p=(x',y')\in \mathbb{L}_n$.
Using Lemma \ref{awayfromvertex}, for $y'\leq -C(\delta)'$ very negative, we have
\[
\text{Vol}(\mathbb{L}_n\cap B(p, r_0)\cap \{ \text{dist}(x, \text{vertex}) \gtrsim \delta) \} ) \leq \omega_n r_0^n(1+ \frac{\epsilon_0}{3}+ C(\delta) e^{y'/C}).
\]
By taking $y'\leq -C(n,g, \epsilon_0, \delta)$ sufficiently negative, we can bound the RHS by $\omega_n r_0^n(1+ \frac{\epsilon_0}{2})$. 
On the other hand, by Lemma \ref{vertexvolume} and the choice of $\delta$,
\[
\text{Vol}(\mathbb{L}_n\cap \{
\text{dist}(x, \text{vertex})
\lesssim \delta  \} \cap B(p, r_0) )\leq Cr_0^{n-1}\delta\ll r_0^n \epsilon_0.
\]
Summing over the two contributions,
\[
\text{Vol}(\mathbb{L}_n \cap B(p, r_0) )\leq \omega_n r_0^n (1+\epsilon_0),
\]
so Allard regularity gives that $\mathbb{L}_n\cap B(p, r_0/2)$ is a $C^{1,\alpha}$ graph over the tangent plane through $p=(x',y')$, with $C^{1,\alpha}$ norm bounded by $\frac{1}{100}$, which can be bootstrapped to $C^{k,\alpha}$-estimates by applying Schauder estimates to the minimal surface system.

We have successively chosen the constants $\epsilon_0, r_0, \delta, C(n, g, \epsilon_0, \delta)$, so all these constants only depend on $n,g$ in the end. The upshot is that we proved the quantitative $C^{k,\alpha}$ regularity on balls of radius comparable to $r_0$, in the region on $\mathbb{L}_n$ where $y_n$ is sufficiently negative depending only on $n,g$. The same argument works when one of $y_1,\ldots y_n, -\sum_1^n y_i$ is very negative, corresponding to the other codimension one faces of $\Delta_n$. These regions cover all but a compact set on $\mathbb{L}_n$. But Prop \ref{smoothness} gives the smoothness of $\mathbb{L}_n$, which provides the $C^{k,\alpha}$ boundedness on any compact region. 
\end{proof}

\begin{cor}\label{Ln-1chart}
There is a large enough $C(n)$ depending on $n,g$,  such that 
the portion of the special Lagrangian
\[
\mathbb{L}_n\cap \{ y_n\leq -C(n) \} 
\]
lies on the graph of a normal vector field $v$ over $\mathbb{L}_{n-1}\times \R_{y_n}\subset T^*T^n$, with $C^{k,\alpha}$ norm bounded by $C(n,k,\alpha)$.
\end{cor}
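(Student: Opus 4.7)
The plan is to extend Lemma \ref{awayfromvertex} across the $O(\delta)$-neighbourhoods of the vertices of $\Delta_{n-1}\subset\partial\Delta_n$, by combining the uniform $C^{k,\alpha}$ regularity of Part 1 of Theorem \ref{asymptotic} with a compactness-plus-unique-continuation argument. Fix a small $\delta=\delta(n,g)>0$. When $y_n\leq -C(n)$ with $C(n)$ sufficiently large, Lemma \ref{expdecay1} forces $x_n\leq e^{y_n/C}$ to be exponentially small, so the $x$-projection lies within $O(\delta)$ of the face $\{x_n=0\}$, and in particular the vertex $(0,\ldots,0,\pi)$ of $\Delta_n$ stays a fixed positive distance away and is irrelevant. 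On the subregion where $\mathrm{dist}(x,\mathrm{vertices})\geq\delta$, Lemma \ref{awayfromvertex} directly yields the desired graph representation with exponentially small $C^{k,\alpha}$ norm. The remaining task is to handle the $O(\delta)$-neighbourhood of each of the $n$ vertices of $\Delta_n$ lying on $\{x_n=0\}$.

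For these vertex neighbourhoods, I argue by contradiction. Suppose no uniform $C^{k,\alpha}$ bound holds. Then one extracts a sequence $p_k=(x_k,y_k)\in\mathbb{L}_n$ with $(y_k)_n\to-\infty$ and $x_k$ within $\delta$ of some vertex $v$, such that on the uniform regularity ball $B(p_k,r_0)$ (with $r_0$ from Part 1) the graph of $\mathbb{L}_n$ over $\mathbb{L}_{n-1}\times\R_{y_n}$ either fails to exist or has $C^{k,\alpha}$ norm blowing up. Translate in the $y_n$ coordinate by $-(y_k)_n$, and further translate in the $y_1,\ldots,y_{n-1}$ coordinates so that the translated base point $\tilde{p}_k$ stays in a compact region of $T^*T^n$. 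These ambient translations preserve the Calabi-Yau structure, and carry $\mathbb{L}_{n-1}\times\R_{y_n}$ to a $y'$-translate of itself, still a smooth real analytic product special Lagrangian. The translated submanifolds $\tilde{\mathbb{L}}_k$ inherit the uniform $C^{k,\alpha}$ bounds from Part 1, so a subsequence converges in $C^{k,\alpha'}_{\mathrm{loc}}$ to a smooth embedded special Lagrangian $\mathbb{L}_\infty$ through $\tilde{p}=\lim\tilde{p}_k$. Applying Lemma \ref{awayfromvertex} in the original frame and then translating, the $C^{k,\alpha}$ graph norm of $\tilde{\mathbb{L}}_k$ over the translated $\mathbb{L}_{n-1}\times\R_{y_n}$ on any fixed bounded set with $\mathrm{dist}(x,\mathrm{vertex})\geq\delta/2$ is bounded by $C(\delta)e^{(y_k)_n/C}\to 0$. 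Since $\mathbb{L}_\infty$ has phase $\hat{\theta}=\frac{n-1}{2}\pi\neq\frac{n}{2}\pi$, it cannot be locally contained in the cotangent fibre over $v$, so every neighbourhood of $\tilde{p}$ in $\mathbb{L}_\infty$ contains points whose $x$-projection differs from $v$. Hence $\mathbb{L}_\infty$ coincides with the translated $\mathbb{L}_{n-1}\times\R_{y_n}$ on a nonempty open subset. Both are real analytic submanifolds of the flat Calabi-Yau ambient, and the translated $\mathbb{L}_{n-1}\times\R_{y_n}$ is connected, so unique continuation places it inside the connected component of $\mathbb{L}_\infty$ through $\tilde{p}$. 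A projection argument in the spirit of Lemma \ref{multoneprojection}, passed to the limit, rules out additional sheets, and the resulting tangent plane convergence contradicts the assumed failure of the graphical representation.

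The main obstacle is making the compactness-plus-unique-continuation step near the vertex fully rigorous. One must verify that the subsequential limit $\mathbb{L}_\infty$ is an embedded smooth submanifold (no varifold-type degeneration) and agrees with the reference product special Lagrangian with multiplicity one (no additional sheets or higher multiplicity in the cotangent fibre over $v$). The uniform $C^{k,\alpha}$ bounds from Part 1 secure embedded smoothness, while Lemma \ref{multoneprojection} passed to the limit yields the multiplicity one property. Once these are secured, the remainder reduces to standard tangent plane comparison combined with Schauder estimates for the minimal surface system.
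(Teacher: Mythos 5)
Your proposal is correct in spirit but takes a genuinely different and considerably heavier route than the paper. The paper's proof is one short paragraph with no compactness, no blow-up, no unique continuation, and no multiplicity analysis. Having established (Part 1 of Theorem \ref{asymptotic}) that around any $p\in\mathbb{L}_n$ with $y_n$ sufficiently negative, $\mathbb{L}_n\cap B(p,r_0/2)$ is a $C^{1,\alpha}$-small graph over the tangent plane $T_p\mathbb{L}_n$, and (Lemma \ref{awayfromvertex}) that it is simultaneously a $C^{1,\alpha}$-small graph over $\mathbb{L}_{n-1}\times\R$ on the large part of $B(p,r_0/2)$ away from the $O(\delta)$-vertex neighbourhoods, the paper concludes by a triangle inequality for graph representations: the $n$-plane $T_p\mathbb{L}_n$ is itself a $C^{1,\alpha}$-small graph over $\mathbb{L}_{n-1}\times\R\cap B(p,r_0/3)$ (two $n$-dimensional submanifolds that are $C^1$-close on a substantial sub-ball, both with controlled second fundamental form, remain close on a slightly smaller ball), and therefore so must be $\mathbb{L}_n\cap B(p,r_0/3)$. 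The tangent-plane comparison you intend to perform ``at the end'' after compactness is thus performed directly and quantitatively, with no asymptotic limit ever taken.

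Your compactness route can likely be made rigorous, but is burdened with precisely the issues you flag as the ``main obstacle'' (multiplicity one and excluding extra sheets of the blow-up limit — these do follow from the $1+\epsilon_0$ volume ratio bound already appearing in the proof of Part 1, or from Lemma \ref{multoneprojection} passed to the limit), together with one issue you gloss over: the $y_1,\ldots,y_{n-1}$ translation amounts used to keep $\tilde p_k$ bounded may themselves diverge, so the translated reference $\mathbb{L}_{n-1}\times\R_{y_n}$ need not converge to a translate of itself; by the inductive asymptote for $\mathbb{L}_{n-1}$ it could degenerate further to something like $\mathbb{L}_{n-2}\times\R^2$, which alters the identification-via-unique-continuation step and requires a separate case analysis. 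All of this machinery is superfluous once one observes that the quantitative $C^{1,\alpha}$ closeness to the tangent plane, already secured in Part 1, lets one compare directly against $\mathbb{L}_{n-1}\times\R$.
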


\begin{proof}
We use the notations in the above proof. By Lemma \ref{awayfromvertex} and the above proof, $\mathbb{L}_n\cap B(p, r_0/2)$ is a $C^{1,\alpha}$-small graph over $\mathbb{L}_{n-1}\times \R$ away from the small subset $\{ \text{dist}(x, \text{vertex})\lesssim \delta\}$, as well as a small $C^{1,\alpha}$-small graph over the tangent plane at $p$. Thus the tangent plane is itself a $C^{1,\alpha}$-small graph over $\mathbb{L}_{n-1}\times \R \cap B(p, r_0/3)$, and so must be $\mathbb{L}_n\cap B(p, r_0/3)$. 
\end{proof}

We now prove Part 2 of Theorem \ref{asymptotic} on the inductive asymptote.

\begin{proof}
We are left to prove the exponential decay in the inductive asymptote statement of Theorem \ref{asymptotic}, namely that for $y_n$ sufficiently negative,  the normal vector field $v$ in Cor. \ref{Ln-1chart} has local $C^{k,\alpha}$ norm bounded by $C' e^{y_n/C(n)}$ for constants depending only on $n,g$.

%By Lemma \ref{awayfromvertex}, we already know the exponential decay of $v$ in the region  $\{ \text{dist}(x, \text{vertex})\gtrsim \delta\}$.
We define for sufficiently large $R$
\[
f(R)= \sup_{p=(x,y), y_n\leq -R  } \norm{v}_{ C^{k,\alpha}(B(p,r_0)\cap \mathbb{L}_{n-1}\times \R)}. 
\]
Now $v$ inherits an elliptic equation from the minimal surface, and we already know the boundedness of $C^{k,\alpha}$-norms, so for $p\in \mathbb{L}_{n-1}\times \R_{y_n}$ with $y_n\leq -R-r_0$,
\[
 \norm{v}_{ C^{k,\alpha}(B(p,r_0)\cap \mathbb{L}_{n-1}\times \R)}\leq C \dashint_{ B(p,r_0)\cap \mathbb{L}_{n-1}\times \R } |v|.
\]
The $L^1$ average can be split into the $\{ \text{dist}(x, \text{vertex})\gtrsim \delta\}$ region contribution, which is $O( C(\delta)e^{y_n/C(n)})$ by Lemma \ref{awayfromvertex}, and the $\{ \text{dist}(x, \text{vertex})\lesssim \delta\}$ region contribution, which is $O( \delta r_0^{-1} f(R)) $, using the smallness of volume estimate from Lemma \ref{vertexvolume}. In summary, there are constants depending only on $n,g$ such that
\[
f(R+ r_0) \leq C' e^{-R/C(n)} + (C' \delta r_0^{-1}) f(R)  .
\]
Choosing $\delta\ll r_0$, the coefficient of $f(R)$ is strictly smaller than one. Iterating this estimate gives exponential decay on $f(R)$, namely the desired exponential decay estimate on the local $C^{k,\alpha}$-norms. 
\end{proof}

\subsection{Topology of the special Lagrangian}\label{Topologysection}

We recall the notion of \emph{real blow up} from the work of Matessi \cite{Matessi}. We start from 
the coamoeba $C_{std}$ (\cf(\ref{coamoeba})).  At each vertex, the tangent cone of $C_{std}$ is a copy of $\R_{>0}^n \cup -\R_{>0}^n$, and the set of real lines contained in this tangent cone is an open subset of the real projective space $\mathbb{RP}^{n-1}$. The real blow up is then
\[
\tilde{C}_{std}= \text{Int}(\Delta_n\cup -\Delta_n)\cup \bigcup_{vertices} \{ l\in \mathbb{RP}^{n-1}: \text{$l$ is contained in the tangent cone}  \}.
\]
This can be given a natural real analytic structure.
For instance, near the origin in $\Delta_n$, the ratios $\frac{x_i}{x_j}$ extend to local analytic functions on the real blow up.

Our goal is to show

\begin{prop}
  \label{Topology}
Let $n\geq 2$.
The natural projection $\mathbb{L}\to C_{std}$ lifts to a real analytic map $\mathbb{L}\to \tilde{C}_{std}$, which is a homeomorphism.   
\end{prop}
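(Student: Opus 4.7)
The plan is to construct the lift $\pi\colon \mathbb{L}_n\to \tilde{C}_{std}$ explicitly, verify it is real analytic and bijective, and conclude it is a homeomorphism. Over the open set $\mathbb{L}^o$ (the graph of $du_n$ over $\text{Int}(\Delta_n\cup -\Delta_n)$), $\pi$ is the natural projection to the coamoeba interior. At a point $(0,\mathfrak{y})$ in the cotangent fiber over the origin vertex, Cor.~\ref{graphicalnearvertexlem} identifies the tangent plane of $\mathbb{L}_n$ as $\R\cdot(a_1,\ldots,a_n)$ in the $x$-direction with $(a_i)\in \R_{\geq 0}^n\setminus\{0\}$, and the $y$-component perpendicular in $\R^n$ to $(a_i)$; set $\pi(0,\mathfrak{y}):=[a_1:\cdots:a_n]$, viewed as a point in the blow-up fiber over the origin. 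The construction is analogous at the other vertices.

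The critical preliminary is that each $(a_i)$ lies strictly in $\R_{>0}^n$, so that $\pi$ actually lands in $\tilde{C}_{std}$. Observe that since $u_n(x)\geq u_n(0)+\langle y,x\rangle$ on $\Delta_n\subset\R_{\geq 0}^n$, decreasing any coordinate of a subgradient preserves the subgradient property, so $-\R_{\geq 0}^n$ lies in the recession cone of $Du_n(0)$. If some $a_i$ vanished at $\mathfrak{y}$, the linear functional $\sum a_j y_j$ would be constant and maximal along the entire half-line $\{\mathfrak{y}-t e_i:t\geq 0\}\subset Du_n(0)$, placing this half-line on $\partial Du_n(0)$. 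But by Cor.~\ref{graphicalnearvertexlem}, $\partial Du_n(0)$ is a real analytic convex hypersurface, so its Gauss map is real analytic; constancy of the Gauss map along the half-line combined with analytic continuation on the connected hypersurface $\partial Du_n(0)$ would force $\partial Du_n(0)$ to lie in a hyperplane, making $Du_n(0)$ a half-space. This contradicts Lem.~\ref{sandwich}, which forces the recession cone of $Du_n(0)$ to be exactly $-\R_{\geq 0}^n$ rather than the larger half-space recession cone.

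Real analyticity of $\pi$ now follows directly from Cor.~\ref{graphicalnearvertexlem}: in the local parametrization of (\ref{graphicalnearvertex}), the ratios $\tilde{x}_i/\tilde{x}_n=f_i(\tilde{x}_n,\tilde{y}_1-\tilde{\mathfrak{y}}_1,\ldots)$ are analytic, and these ratios furnish the natural coordinates on the real blow-up at the vertex. For bijectivity, the open part is a diffeomorphism by strict convexity of $u_n$. Injectivity on the vertex fiber: if two distinct $\mathfrak{y},\mathfrak{y}'\in\partial Du_n(0)$ had the same image ray $[a]$, both would maximize $\sum a_j y_j$, so the segment $[\mathfrak{y},\mathfrak{y}']$ would sit on $\partial Du_n(0)$, and the same Gauss-map analytic-continuation argument yields a contradiction. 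Surjectivity onto the vertex fiber: for any $[a]$ with $(a_i)\in\R_{>0}^n$, the functional $\ell_a=\sum a_j y_j$ is bounded above by $0$ on $Du_n(0)\subset-\R_{\geq 0}^n$, and strict positivity of all $a_i$ forces any sequence with divergent coordinates to send $\ell_a\to-\infty$, so the supremum is attained at some $\mathfrak{y}(a)\in\partial Du_n(0)$; Cor.~\ref{graphicalnearvertexlem} then identifies $\pi(0,\mathfrak{y}(a))=[a]$.

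Finally, $\pi$ is a continuous real analytic bijection between $n$-dimensional real analytic manifolds. Near a vertex fiber point, in the blow-up chart on $\tilde{C}_{std}$, the map $\pi$ becomes $(\tilde{x}_n,\tilde{y}_1,\ldots,\tilde{y}_{n-1})\mapsto(\tilde{x}_n,f_1,\ldots,f_{n-1})$, whose Jacobian is nondegenerate at $\tilde{x}_n=0$ since the tangent-cone datum $(a_i)$ varies nontrivially with $\tilde{y}$ by the convexity of $\partial Du_n(0)$ combined with its real analyticity (no flat pieces, as shown above). Hence $\pi$ is a local analytic diffeomorphism, and the global bijectivity upgrades this to a homeomorphism. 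The main obstacle I expect is the rigidity step based on analytic continuation of the Gauss map, used for both strict positivity and injectivity on the vertex fiber; this step relies essentially on the real-analyticity of the vertex fibers furnished by Cor.~\ref{graphicalnearvertexlem}.
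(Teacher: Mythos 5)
Your construction of the lift, the strict positivity of the normal direction (the paper's Lemma \ref{normalvectorhypersurface}), the real analyticity via the ratios $\tilde x_i/\tilde x_n$ in the chart (\ref{graphicalnearvertex}), and the injectivity and surjectivity arguments on the vertex fibres all coincide with the paper's proof. The genuine gap is in your final step. You assert that the Jacobian of $(\tilde{x}_n,\tilde{y}_1,\ldots,\tilde{y}_{n-1})\mapsto(\tilde{x}_n,f_1,\ldots,f_{n-1})$ is nondegenerate at $\tilde{x}_n=0$ because the tangent-cone datum $(a_i)$ ``varies nontrivially'' with $\tilde y$, citing convexity and the absence of flat pieces. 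Nondegeneracy of that Jacobian is precisely nondegeneracy of the differential of the Gauss map of $\partial Du_n(0)$, i.e.\ strict positivity of all principal curvatures. This does \emph{not} follow from the absence of line segments, even for real analytic convex hypersurfaces: the curve $y=x^4$ contains no segment and is real analytic, yet its Gauss map has vanishing derivative at the origin. The paper explicitly flags this as unknown --- see the remark after the proof, which states that the lifted map would be a \emph{diffeomorphism} if strict convexity of $\partial Du_n(0)$ were established. Your argument therefore relies on a fact stronger than what is available.

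The homeomorphism conclusion is still reachable, but by a different mechanism. The paper proves that the lifted map is \emph{proper}: the preimage of a bounded set $\{\Lambda^{-1}\leq x_i/x_j\leq\Lambda,\ |x|\leq\delta\}$ in $\tilde C_{std}$ is bounded in $\mathbb{L}$, using the uniform Lipschitz bounds on the functions $x_i/x_n$ over the charts (coming from the quantitative regularity of Theorem \ref{asymptotic}) together with the inequality $\sum a_i(\mathfrak{y}_i-y_i')\geq 0$ for the normal vector, which bounds $\mathfrak{y}$ in terms of $\Lambda$. A proper continuous bijection onto a locally compact Hausdorff space is a homeomorphism. (Alternatively, invariance of domain applied to a continuous bijection between $n$-manifolds would also close the argument.) Either route should replace your Jacobian computation.
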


\begin{lem}\label{normalvectorhypersurface}
At any point on the boundary of the convex set $Du_n(0)\subset \R^n_y$, the outward pointing normal vector lies in $\R_{>0}^n\subset \R^n_x= (\R^n_y)^*$. 
\end{lem}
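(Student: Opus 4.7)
The plan is to use Cor. \ref{graphicalnearvertexlem}, which identifies the outward normal to $\partial Du_n(0)$ at $\mathfrak{y}$ with the direction $(a_1, \ldots, a_n) \in \R^n_{\geq 0}\setminus\{0\}$ of the $x$-tangent of $\mathbb{L}$ at $(0,\mathfrak{y})$. It suffices to rule out $a_i = 0$ for each $i$. Suppose WLOG $a_n = 0$ and $a_1 > 0$. By smoothness of $\mathbb{L}$ (Prop. \ref{smoothness}) together with the graphical description, there is a smooth curve $\gamma(t) = (x(t), y(t))$ on $\mathbb{L}$ with $\gamma(0) = (0, \mathfrak{y})$, $x_j(t) = ta_j + O(t^2)$ for $j < n$, and $x_n(t) = O(t^2)$. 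For small $t > 0$ we have $x(t) \in \text{Int}(\Delta_n)$, so $y_n(t) = \partial_n u_n(x(t))$, and smoothness forces $\partial_n u_n(x(t)) \to \mathfrak{y}_n$, which is finite and strictly negative (the strict bound $\mathfrak{y} \in -\R^n_{>0}$ follows by restricting $u_n$ to any 2D coordinate face, where $u_n = u_2$, and invoking the explicit containment $Du_2(0) \subset -\R^2_{>0}$).

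The contradiction will come from showing $\partial_n u_n(x(t)) \to -\infty$. The curve approaches the vertex $0$ tangentially to the face $\{x_n = 0\}$, since $x_n(t)/|x(t)| = O(t) \to 0$, and the $n$-th partial derivative should diverge along such a tangential approach. To make this rigorous I would use a blow-up. The convexity of $u_n$ with $u_n(0) = 0$ together with the Lipschitz bound of Prop. \ref{vertexcontinuity} imply that the cone function $\tilde{u}_n(x) := \lim_{t\to 0^+} u_n(tx)/t$ exists on $\R^n_{\geq 0}$ as a convex $1$-homogeneous function (equal to the support function of $Du_n(0)$). It satisfies the face compatibility $\tilde{u}_n|_{\{x_n = 0\}} = \tilde{u}_{n-1}$, and its restriction to the $(x_1, x_n)$-coordinate plane coincides with the explicit cone limit of (\ref{2Dsolution}), whose $\partial_n$-derivative diverges to $-\infty$ as $x_n/x_1 \to 0^+$. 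Transferring via the $C^1$-convergence $u_n(tx)/t \to \tilde{u}_n(x)$, valid on compact subsets of $\text{Int}(\R^n_{\geq 0})$, gives the desired divergence along $x(t)$.

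The main obstacle is that $x(t)$ approaches the vertex $0$ rather than a non-vertex boundary point, so Cor. \ref{gradientdivergence}, which requires $(x_1, \ldots, x_{n-1}) \in \Delta_{n-1}^\delta$, does not apply off the shelf. The blow-up sidesteps this: after rescaling, one studies $\tilde{u}_n$ on a fixed compact subset of $\text{Int}(\R^n_{\geq 0})$, where the gradient divergence near the face $\{x_n = 0\}$ follows from the 2D restriction by the convexity of $\tilde{u}_n$. An alternative would be a direct subsolution barrier in the spirit of Prop. \ref{logdivergencebarrier}, using the 2D face data on the $(x_1, x_n)$-slice plus the smallness of the remaining coordinates on $x(t)$ to control the error, but the blow-up route is more conceptual and relies only on the existence of $\tilde{u}_n$ and its 2D restriction.
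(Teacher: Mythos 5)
Your first half is fine and runs parallel to the paper's setup: the containment of the normal in $\R^n_{\geq 0}$, the identification of the normal with the tangent direction $(a_i)$ via Cor.~\ref{graphicalnearvertexlem}, and the observation that a curve on $\mathbb{L}$ through $(0,\mathfrak{y})$ tangent to $(a_1,\ldots,a_{n-1},0)$ forces $\partial_n u_n(x(t))\to\mathfrak{y}_n$ finite. The gap is entirely in the step you need for the contradiction, namely that the blow-up cone $\tilde{u}_n(x)=\lim_{t\to 0^+}u_n(tx)/t$ satisfies $\partial_n^+\tilde{u}_n(\xi_0)=-\infty$ at $\xi_0=(a_1,\ldots,a_{n-1},0)$. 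You assert this ``follows from the 2D restriction by the convexity of $\tilde{u}_n$,'' but convexity does not propagate the blow-up of a one-sided normal derivative from the coordinate slice $\{x_2=\cdots=x_{n-1}=0\}$ to other points of the facet: the natural convexity inequalities (writing $\xi_0$ as a convex combination involving $se_1$ and expanding the difference quotients) produce an extra term $\bigl(\lambda f(p)+(1-\lambda)f(r)-f(q)\bigr)/\epsilon\geq 0$ that blows up to $+\infty$ unless $\tilde{u}_n$ is affine along the connecting segment in the facet, which it is not (its restriction to $\{x_n=0\}$ is $\tilde{u}_{n-1}$, a genuinely convex function). Indeed, for a general $1$-homogeneous convex function on $\R^n_{\geq 0}$, bounded between $-C|x|$ and $0$ and with the downward-monotonicity inherited from $Du_n(0)-\R^n_{\geq 0}=Du_n(0)$, the normal derivative can be $-\infty$ on one part of a facet and finite on another; nothing in pure convex analysis rules this out.

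Worse, the claim you need is not merely unproven but is essentially equivalent to the lemma itself: since $\tilde{u}_n$ is the support function of $Du_n(0)$ on $\R^n_{\geq 0}$ and $\nu=\xi_0$ is by hypothesis the outward normal at $\mathfrak{y}$, one has $\tilde{u}_n(\xi_0)=\langle\mathfrak{y},\xi_0\rangle$ and hence
\[
\frac{\tilde{u}_n(\xi_0+\epsilon e_n)-\tilde{u}_n(\xi_0)}{\epsilon}\;\geq\;\frac{\langle\mathfrak{y},\xi_0+\epsilon e_n\rangle-\langle\mathfrak{y},\xi_0\rangle}{\epsilon}\;=\;\mathfrak{y}_n\;>\;-\infty ,
\]
so $\partial_n^+\tilde{u}_n(\xi_0)=-\infty$ is precisely the negation of the hypothesis you are trying to contradict. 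Your argument therefore restates the problem rather than solving it, unless that divergence is established by an independent input. The paper's independent input is the \emph{real analyticity} of $\partial Du_n(0)$ from Cor.~\ref{graphicalnearvertexlem}: if $\nu_n=0$, the downward-invariance of $Du_n(0)$ places the whole ray $\mathfrak{y}-\R_{\geq 0}\partial_{y_n}$ on the supporting hyperplane, hence on $\partial Du_n(0)$; analytic continuation then forces the entire line $\mathfrak{y}+\R\partial_{y_n}$ into $\partial Du_n(0)$, contradicting $Du_n(0)\subset-\R^n_{\geq 0}$ from Lemma~\ref{sandwich}. You use smoothness of $\mathbb{L}$ only to build the curve, and never invoke analyticity; I recommend replacing the blow-up step with this ray/analytic-continuation argument.
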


\begin{proof}
By definition $Du_n(0)$ is the set of subgradients at the vertex. Since $\Delta_n$ is contained in $\R_{\geq 0}^n$, translation in the $-\R\partial_{y_i} $ direction remains inside $Du_n(0)$. Let $\nu$ denote the outward pointing normal at any given point $\mathfrak{y}\in \partial Du_n(0)$, then $\langle \nu, -\partial_{y_i} \rangle\leq 0$ for any $i=1,\ldots n$, namely $\nu\in \R_{\geq 0}^n$.

Suppose for contradiction that one of the coordinates of $\nu$ is zero, say $\nu_n=0$. By the convexity of the set $Du_n(0)$, 
\[
\langle y-\mathfrak{y}, \nu\rangle \leq 0,\quad \forall y\in Du_n(0). 
\]
Notice that for any $a\geq 0$, we have
\[
y=\mathfrak{y}- a \partial_{y_n} \in Du_n(0),\quad \langle y-\mathfrak{y}, \nu\rangle = 0.
\]
so the ray $\mathfrak{y}- \R_{\geq 0}\partial_{y_n}$ is contained inside the boundary $\partial Du_n(0)$. But $\partial Du_n(0)$ is a real analytic hypersurface, so by analytic continuation, the entire real line $\mathfrak{y}+ \R\partial_{y_n}$ is contained in $\partial Du_n(0)$. This contradicts Lemma \ref{sandwich}, which says that $Du_n(0)\subset - (\R_{\geq 0}^n)$.

We conclude that all coordinates of $\nu$ are strictly positive.
\end{proof}

\begin{lem}\label{Gaussmap}
(Gauss map)
Let $n\geq 2$.
The natural projection $\mathbb{L}\to C_{std}$ lifts to a real analytic map $\mathbb{L}\to \tilde{C}_{std}$. For any $\mathfrak{y}\in \partial Du_n(0)$, this lifted map sends $(0, \mathfrak{y}) \in \mathbb{L} $ to the point in $\mathbb{RP}^{n-1}$ representing the line along the normal direction to $\partial Du_n(0)$ at $\mathfrak{y}$. 
\end{lem}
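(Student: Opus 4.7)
The plan is to extend the projection across the vertex fibres using the local real analytic graphical representation from Cor.\ \ref{graphicalnearvertexlem}, and then identify the limiting values with the outward normal to $\partial Du_n(0)$. Over $\text{Int}(\Delta_n\cup -\Delta_n)$ the lift is automatic because $\mathbb{L}^o$ is the graph of $du_n$ and projects analytically to its image, so the only real task is the extension across the preimages of the vertices; by the obvious symmetries, it suffices to treat the vertex at the origin.

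Fix a point $(0,\mathfrak{y})\in \mathbb{L}$. Cor.\ \ref{graphicalnearvertexlem} furnishes a direction $(a_1,\ldots,a_n)\in \R^n_{\geq 0}\setminus\{0\}$, which after relabeling I normalize to $a_n=1$, together with a real analytic graphical parametrization in the rotated coordinates $\tilde{x}_i=x_i-a_ix_n$ for $i<n$, $\tilde{x}_n=x_n$: namely $\mathbb{L}$ is locally swept out by $(\tilde{x}_n,\tilde{y}_1,\ldots,\tilde{y}_{n-1})$ through $\tilde{x}_i=\tilde{x}_n f_i(\tilde{x}_n,\tilde{y}_\cdot-\tilde{\mathfrak{y}}_\cdot)$ with $f_i(0)=0$. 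Since the local analytic coordinates on $\tilde{C}_{std}$ near the origin vertex are exactly the ratios $x_i/x_n$, I then compute
\[
\frac{x_i}{x_n}=\frac{\tilde{x}_i+a_i\tilde{x}_n}{\tilde{x}_n}=a_i+f_i(\tilde{x}_n,\tilde{y}_\cdot-\tilde{\mathfrak{y}}_\cdot),
\]
which extends real analytically across $\{\tilde{x}_n=0\}\subset \mathbb{L}$, taking the value $a_i$ at $(0,\mathfrak{y})$. This exhibits the desired real analytic lift, sending $(0,\mathfrak{y})$ to the projective class $[a_1:\cdots:a_n]\in \mathbb{RP}^{n-1}$; Lemma \ref{normalvectorhypersurface} then guarantees $a_i>0$, so the image indeed lies in the admissible subset of the blow-up.

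It remains to identify $[a_1:\cdots:a_n]$ with the normal line to $\partial Du_n(0)$ at $\mathfrak{y}$. The tangent plane to $\mathbb{L}$ at $(0,\mathfrak{y})$, as recorded in Cor.\ \ref{graphicalnearvertexlem}, is $\{x\in \R(a_1,\ldots,a_n),\ \sum_i a_i(y_i-\mathfrak{y}_i)=0\}$; intersecting with the cotangent fibre $\{x=0\}$ recovers the tangent hyperplane to the real analytic hypersurface $\partial Du_n(0)\subset \R^n_y$ at $\mathfrak{y}$, namely $\{\sum_i a_i(y_i-\mathfrak{y}_i)=0\}$, whose conormal in $\R^n_x=(\R^n_y)^*$ is by definition $(a_1,\ldots,a_n)$.

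The main obstacle, modest as it is, is simply aligning the rotated chart of Cor.\ \ref{graphicalnearvertexlem} (which privileges the $a_n=1$ normalization) with the ratio chart on $\tilde{C}_{std}$; the substitution above handles this cleanly, and any other normalization $a_k=1$ is symmetric. Beyond this, the argument is an unpacking of definitions on top of the structural results already established — in particular the real analyticity of $\mathbb{L}$ near the vertex fibre (Prop.\ \ref{smoothness}), the graphical description of Cor.\ \ref{graphicalnearvertexlem}, and the strict positivity of the normal components from Lemma \ref{normalvectorhypersurface}.
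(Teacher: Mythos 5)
Your proposal is correct and follows essentially the same route as the paper: it uses the rotated-coordinate graphical representation of Cor.~\ref{graphicalnearvertexlem} to show that the ratios $x_i/x_n = a_i + f_i$ extend real analytically across the vertex fibre, invokes Lemma~\ref{normalvectorhypersurface} for the strict positivity of the $a_i$, and identifies $(a_i)$ with the normal to $\partial Du_n(0)$ via the tangent plane data. The only difference is that you spell out the identification of $[a_1:\cdots:a_n]$ with the normal line by intersecting the tangent plane with the cotangent fibre, which the paper states more briefly.
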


\begin{proof}
In Cor. \ref{graphicalnearvertexlem}, the parameter $(a_i)$ for the tangent space is up to scaling the same as the normal vector to $\partial Du_n(0)$. In particular $a_i>0$ for any $i=1,\ldots n$ by Lemma \ref{normalvectorhypersurface}, and we can set $a_n=1$.

In the notation of Cor. \ref{graphicalnearvertexlem}, 
\[
\tilde{x}_i= x_i- a_i x_n ,\quad i=1,\ldots n-1.
\]
By the graphical representation (\ref{graphicalnearvertex}), the ratios $\frac{ x_i}{x_n}$ extend to real analytic functions over the locus $x_n=0$, namely the part of $\mathbb{L}$ lying over the vertex at the origin. At the point $(0, \mathfrak{y})$, the tangent space information shows that the value of $\frac{ x_i}{x_n}$ is $a_i$. Since $a_i>0$, the extension lands inside the correct subset of $\mathbb{RP}^{n-1}$, so we obtain the desired lifting map $\mathbb{L}\to \tilde{C}_{std}$. 
\end{proof}

\begin{lem}
The lifted map is injective.
\end{lem}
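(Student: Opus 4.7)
The plan is to stratify both $\mathbb{L}$ and $\tilde{C}_{std}$, check that the lift respects the stratifications, and reduce to injectivity of the Gauss map $\partial Du_n(v)\to \mathbb{RP}^{n-1}$ at each vertex $v$ of $\Delta_n$.

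By Prop.~\ref{supportL} the support of $\mathbb{L}$ is the disjoint union of $\mathbb{L}^o$ and the fibres $\partial Du_n(v)$ over the $n+1$ vertices $v$, and $\tilde{C}_{std}$ stratifies as $\text{Int}(\Delta_n\cup -\Delta_n)$ together with a copy of $\mathbb{RP}^{n-1}$ attached at each vertex. Lemma~\ref{Gaussmap} identifies the lift with the $x$-projection on $\mathbb{L}^o$ and with the Gauss map on each $\partial Du_n(v)$. On $\mathbb{L}^o$ the projection is injective because $\mathbb{L}^o$ is the graph of $du_n$, and the images of distinct strata lie in mutually disjoint pieces of $\tilde{C}_{std}$, so the problem reduces to showing: \emph{for each vertex $v$, the Gauss map $\partial Du_n(v)\to \mathbb{RP}^{n-1}$ is injective}.

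I would argue the case $v=0$ in detail; the remaining vertices follow by an analogous template. Suppose distinct $\mathfrak{y}_1,\mathfrak{y}_2\in\partial Du_n(0)$ have parallel outward normal lines. By Lemma~\ref{normalvectorhypersurface} both outward normals lie in the open orthant $\R_{>0}^n$, so they coincide as vectors, and $\mathfrak{y}_1,\mathfrak{y}_2$ share a common supporting hyperplane of the convex body $Du_n(0)$; by convexity the closed segment $[\mathfrak{y}_1,\mathfrak{y}_2]$ lies in $\partial Du_n(0)$. Cor.~\ref{graphicalnearvertexlem} promotes $\partial Du_n(0)$ to a closed real analytic hypersurface in $\R^n_y$. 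Parametrize the affine line $L$ through $\mathfrak{y}_1,\mathfrak{y}_2$ by $\gamma(t)$, and let $S=\{t\in\R:\gamma(t)\in\partial Du_n(0)\}$, a closed subset of $\R$ containing a nondegenerate interval. Near any non-isolated point $t_*\in S$, a local real analytic defining function $F$ for $\partial Du_n(0)$ pulls back to a one-variable analytic function $F\circ\gamma$ that vanishes on a sequence accumulating at $t_*$, and hence vanishes in a neighbourhood of $t_*$ by the identity theorem; therefore the set of non-isolated points of $S$ is both open and closed in $\R$ and nonempty, so it equals $\R$, forcing $L\subset\partial Du_n(0)\subset -(\R_{\geq 0}^n)$. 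But $-(\R_{\geq 0}^n)$ contains no affine line (along any such line each coordinate functional would have to remain nonpositive for all real $t$, forcing the direction vector to vanish componentwise), contradiction.

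For the remaining vertices $v=\pi e_k$, the sandwich Lemma~\ref{sandwich} confines $Du_n(v)$ inside the polyhedral cone $\{y_k\geq\max_i(0,y_i)\}$, which again contains no affine line by the same coordinate argument, and the proof of Lemma~\ref{normalvectorhypersurface} adapts to show that the outward normal at any smooth boundary point of $\partial Du_n(v)$ lies in the interior of the appropriate polar cone of directions, so ``parallel normal lines'' still forces ``equal outward normals.'' The three-step mechanism (parallel normals $\Rightarrow$ segment in $\partial Du_n(v)$ by convexity $\Rightarrow$ full affine line in $\partial Du_n(v)$ by analytic continuation $\Rightarrow$ contradiction with no-line-in-cone) then goes through verbatim. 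The delicate point I expect throughout is the analytic-continuation step, which depends essentially on the \emph{global} real analyticity of $\partial Du_n(v)$ supplied by Cor.~\ref{graphicalnearvertexlem}, not merely smoothness.
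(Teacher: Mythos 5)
Your proof is correct and follows essentially the same route as the paper: reduce to the Gauss map over a vertex, use convexity to place the segment between two points with a common normal inside $\partial Du_n(0)$, extend to the full affine line by real analyticity, and contradict the confinement of $Du_n(0)$ in a pointed cone from Lemma \ref{sandwich}. The extra details you supply (why parallel normal \emph{lines} force equal normal \emph{vectors} via Lemma \ref{normalvectorhypersurface}, and the careful identity-theorem argument) are correct elaborations of steps the paper states tersely.
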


\begin{proof}
It suffices to prove injectivity above the vertex $0\in \Delta_n$. Suppose the contrary, then two distinct points $\mathfrak{y}, \mathfrak{y}'\in \partial Du_n(0)$ share the same normal vector. Using the convexity of $Du_n(0)$, the line segment joining these two points is also contained in $\partial Du_n(0)$. Then by real analyticity, the entire line through the two points is contained in $\partial Du_n(0)$, contradicting again Lemma \ref{sandwich}.
\end{proof}

\begin{lem}
The lifted map is surjective.
\end{lem}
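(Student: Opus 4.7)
The plan is to prove surjectivity in two stages: first over the interior $\text{Int}(\Delta_n\cup -\Delta_n)$ of $\tilde{C}_{std}$, and then over the exceptional fibers added at each vertex. Over the interior, surjectivity is immediate, since for any $x\in\text{Int}(\Delta_n)$ the function $u_n$ is smooth and the point $(x,du_n(x))\in\mathbb{L}^o$ projects to $x$; the reflection handles $\text{Int}(-\Delta_n)$. The real work is to show that for every line $[\nu]\in\mathbb{RP}^{n-1}$ sitting above a vertex in the blow-up, there exists a point $\mathfrak{y}\in\partial Du_n(\text{vertex})$ whose outward normal direction equals $[\nu]$; by Lemma \ref{Gaussmap} the lift will then send $(0,\mathfrak{y})$ to $[\nu]$. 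By the evident symmetries among the vertices, it suffices to treat the vertex at the origin, where the relevant directions are represented by vectors $\nu\in\R_{>0}^n$.

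Fix such a $\nu$ and consider the linear functional $\ell_\nu(y)=\langle \nu,y\rangle$ on $\R^n_y$. By Lemma \ref{sandwich}, $Du_n(0)\subset -\R_{\geq 0}^n$, which immediately gives $\ell_\nu\leq 0$ on $Du_n(0)$ and also identifies the recession cone of $Du_n(0)$ as precisely $-\R_{\geq 0}^n$ (the containment $\supset$ uses the lower inclusion in Lemma \ref{sandwich}, and $\subset$ uses the upper one). Since every coordinate of $\nu$ is strictly positive, $\ell_\nu$ is strictly negative on every nonzero element of this recession cone. A standard argument with a maximizing sequence, in which the unbounded alternative is ruled out by extracting a limiting direction in the recession cone and using the strict negativity of $\ell_\nu$ there, shows that $\sup_{Du_n(0)}\ell_\nu$ is attained at some $\mathfrak{y}(\nu)\in\partial Du_n(0)$. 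The hyperplane $\{y:\ell_\nu(y)=\ell_\nu(\mathfrak{y}(\nu))\}$ then supports $Du_n(0)$ at $\mathfrak{y}(\nu)$ with outward conormal along $\nu$.

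To upgrade this to the assertion that the actual geometric outward normal at $\mathfrak{y}(\nu)$ is $[\nu]$, I would invoke the smoothness of the boundary. By Prop. \ref{supportL}, every point of $\partial Du_n(0)$ lies on $\mathbb{L}$ as a limit of interior gradients, and Cor. \ref{graphicalnearvertexlem} then shows that $\partial Du_n(0)$ is a real analytic hypersurface in a neighborhood of each such point. In particular $\partial Du_n(0)$ admits a unique outward normal at $\mathfrak{y}(\nu)$, which must coincide with the supporting hyperplane's normal, i.e.\ $[\nu]$. Hence $[\nu]$ lies in the image of the lifted Gauss map.

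The main obstacle I anticipate is the attainment of the supremum of $\ell_\nu$: since $Du_n(0)$ is unbounded in the recession directions, this is not automatic, and the proof must exploit the fact that $\nu\in\R_{>0}^n$ is interior to the dual cone of the recession cone. Once attainment is secured, the promotion from supporting-hyperplane normal to geometric outward normal is a clean consequence of the real analytic hypersurface structure established earlier, and the rest of the argument is bookkeeping over the remaining vertices using the symmetry group of $\Delta_n$.
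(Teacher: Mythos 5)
Your proposal is correct and follows essentially the same route as the paper: reduce to the origin vertex, maximize the linear functional $\langle \nu,\cdot\rangle$ over the closed convex set $Du_n(0)\subset -(\R_{\geq 0}^n)$ using $\nu\in\R_{>0}^n$ to guarantee attainment at a boundary point $\mathfrak{y}$, and conclude via the Gauss map lemma that $(0,\mathfrak{y})$ maps to $[\nu]$. Your extra care about attainment (via the recession cone) and about identifying the supporting-hyperplane conormal with the analytic outward normal just fills in details the paper leaves implicit.
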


\begin{proof}
It suffices to show surjectivity over $0\in \Delta_n$. Let $\nu\in \R_{>0}^n$ represent a point in the open subset of $\mathbb{RP}^{n-1}$ inside $\tilde{C}_{std}$. Then $\langle \nu, \cdot\rangle$ defines a linear function on $Du_n(0)$. By Lemma \ref{sandwich}, the closed convex set $Du_n(0)$ is contained inside $-(\R_{\geq 0}^n)$, so the linear function must achieve an interior maximum at some boundary point $\mathfrak{y}$. Then $\nu$ is the normal vector at $\mathfrak{y}$, hence $(0, \mathfrak{y})\in \mathbb{L}$ maps to the corresponding point in $\tilde{C}_{std}$.
\end{proof}

\begin{lem}
The lifted map is proper.
\end{lem}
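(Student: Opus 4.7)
The plan is to verify the sequential criterion for properness: any sequence $p_k=(x_k,y_k)\in\mathbb{L}$ with $\phi(p_k)$ converging in $\tilde{C}_{std}$ admits a convergent subsequence in $\mathbb{L}$. First, by compactness of the coamoeba closure I would pass to a subsequence with $x_k\to x_\infty$ and split into cases. If $x_\infty$ is an interior point, interior smoothness of $u_n$ gives $y_k\to du_n(x_\infty)$ and the subsequence converges in $\mathbb{L}$. If $x_\infty$ lies on the non-vertex part of $\partial(\Delta_n\cup -\Delta_n)$, then convergence in $\tilde{C}_{std}$ would require $x_k$ to converge in $T^n$ either to an interior point of the coamoeba or to a vertex, contradicting $x_k\to x_\infty$; so this case is vacuous.

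The main case is $x_\infty$ a vertex, WLOG the origin by symmetry; then $q$ lies in the $\mathbb{RP}^{n-1}$ blow-up fiber and corresponds to a direction $\nu$ with all coordinates strictly positive, so $\hat\nu=\nu/|\nu|$ sits in the open positive cone. To bound $|y_k|$ I intend to exploit the convexity of $u_n$ together with the vanishing $u_n(v)=0$ at all $n+1$ vertices $v\in\{0,\pi e_1,\ldots,\pi e_n\}$ of $\Delta_n$. The subgradient inequality $u_n(v)\geq u_n(x_k)+y_k\cdot(v-x_k)$ applied at $v=0$ combined with Prop. \ref{vertexcontinuity} yields $y_k\cdot x_k\geq -C|x_k|$, and applied at $v=\pi e_i$ for each $i$ yields $\pi(y_k)_i\leq y_k\cdot x_k+C$. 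Assume toward contradiction that $|y_k|\to\infty$ and extract $y_k/|y_k|\to\eta$ on the unit sphere. Dividing the second family by $|y_k|$ and using $x_k\to 0$ forces $\eta_i\leq 0$ for every $i$; since $\eta$ is a nonzero vector of nonpositive coordinates while $\hat\nu$ is strictly positive in every coordinate, $\eta\cdot\hat\nu<0$. Dividing the first inequality by $|x_k|$ and passing to the limit then gives $|y_k|(\eta\cdot\hat\nu+o(1))\geq -C$, contradicting $|y_k|\to\infty$. Once $y_k$ is bounded, a further subsequence $y_k\to y_\infty$ yields $p_k\to(0,y_\infty)\in\mathbb{L}$ by closedness, and continuity of $\phi$ combined with the injectivity of the lift established in the preceding lemmas identifies $y_\infty=\mathfrak{y}_\nu$.

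The main obstacle I anticipate is that $y_k=du_n(x_k)$ is not a priori confined to $Du_n(0)$, so the sandwich of Lemma \ref{sandwich} does not directly constrain $y_k$. The key observation that closes the argument is that the full collection of $n+1$ vertex subgradient inequalities pins the asymptotic direction $\eta$ into the closed cone $-\R^n_{\geq 0}$, and this rigidly contradicts the strict positivity of $\hat\nu$ inherited from $q$ lying in the open portion of the blow-up. In the subcase $x_k=0$ infinitely often, where the division by $|x_k|$ is illegal, the same mechanism is available by applying convexity of $Du_n(0)$ against a supporting point from the sandwich of Lemma \ref{sandwich} to force $\nu(y_k)\cdot y_k$ bounded below while also $\to-\infty$ if $|y_k|\to\infty$.
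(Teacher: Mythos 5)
Your proof is correct, and it takes a genuinely different and more elementary route than the paper's. The paper attacks properness through the chart picture near a vertex: it invokes the quantitative $C^{k,\alpha}$ regularity of Theorem~\ref{asymptotic} to obtain uniform Lipschitz control on the ratio functions $x_i/x_n$ in the graphical representation~(\ref{graphicalnearvertex}), which pins the tangent-direction vector $(a_i)$ of a nearby vertex-fiber point inside $[(2\Lambda)^{-1},1]^n$, and then applies the supporting-hyperplane inequality for $Du_n(0)$ together with Lemma~\ref{sandwich} to bound $\mathfrak{y}$. Your argument avoids the regularity theory entirely: you feed the subgradient inequality $u_n(v)\geq u_n(x_k)+y_k\cdot(v-x_k)$ at each of the $n+1$ vertices $v$ (where $u_n(v)=0$), use Prop.~\ref{vertexcontinuity} to control $u_n(x_k)$, extract limit directions $\eta=\lim y_k/|y_k|\leq 0$ and $\hat\nu=\lim x_k/|x_k|>0$, and get a contradiction from $\eta\cdot\hat\nu<0$. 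This is a clean piece of convex analysis, and the fact that the blow-up only adds lines in the \emph{open} positive cone is exactly what makes $\hat\nu>0$ strictly and closes the argument. The paper's approach buys a description in the smooth chart variables (which it needs anyway for the homeomorphism claim), while yours buys economy of means and independence from the minimal-surface regularity machinery. One small point to make explicit: the convexity and the vertex-value inequalities you invoke hold over $\Delta_n$, while the odd extension satisfies $\arctan D^2 u_n=-\frac{n-1}{2}\pi$ (concavity) over $-\Delta_n$; since $\nabla u_n(-x)=\nabla u_n(x)$ and $[x_k/|x_k|]=[-x_k/|x_k|]$ in $\mathbb{RP}^{n-1}$, one can replace $x_k$ by $-x_k$ without changing $y_k$ or the blow-up limit, so the WLOG reduction to $x_k\in\Delta_n$ goes through, but this deserves a sentence.
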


\begin{proof}
By the interior gradient estimate for $u_n$ on $\text{Int}(\Delta_n)$, it suffices to focus on the vertex region, and prove that the preimage of the bounded sets in $\tilde{C}_{std}$,
\begin{equation}\label{propernesssubset}
\Lambda^{-1} \leq \frac{x_i}{x_j} \leq \Lambda, \quad |x|\leq \delta\ll \Lambda^{-1}\ll 1, \quad \forall i,j,
\end{equation}
lies in a bounded region in $\mathbb{L}$.

By Cor. \ref{graphicalnearvertex} and Lemma \ref{normalvectorhypersurface}, at any point $(0,\mathfrak{y})\in \mathbb{L}$, the tangent space is of the form
\[
(x_1,\ldots x_n)\in \R\text{-span}(a_1,\ldots a_n) ,\quad \sum_1^n a_i(y_i-\mathfrak{y}_i)=0,\quad (a_i)\in \R_{> 0}^n,\quad \max_i a_i=1. 
\]
We will focus on the case where $a_n=\max_i a_i=1$, and use the chart (\ref{graphicalnearvertex}) from Cor. \ref{graphicalnearvertexlem}. (Otherwise we use some variant of the chart associated to $\max_i a_i$). By the $C^{k,\alpha}$-regularity of $\mathbb{L}$ (\cf Thm. \ref{asymptotic}), the real analytic functions $f_i$ in (\ref{graphicalnearvertex}) have uniform $C^1$ bounds on each coordinate chart, so the functions $\frac{x_i}{x_n}$ on the coordinate charts have a \emph{uniform Lipschitz constant} independent of the position of $\mathfrak{y}$.

From Thm. \ref{asymptotic}, we obtain that
any point $(x,y)\in \mathbb{L}$ in the preimage of $\{ |x|\leq \delta\} $ must lie within $C|x|=O(\delta)$ distance to some $(0, \mathfrak{y})\in \mathbb{L}$, so in particular lies on one of these charts, and without loss $a_n= \max_i a_i=1$. Suppose that $(x,y)$ lies in the preimage of (\ref{propernesssubset}), the uniform Lipschitz bound then gives $|\frac{x_i}{x_n}-\frac{a_i}{a_n}|\leq C\delta\ll \Lambda^{-1}$ for $i=1,\ldots n$, so
\[
(2\Lambda)^{-1} \leq a_i \leq 1, \quad i=1,\ldots, n.
\]
Notice $(a_1,\ldots a_n)$ is a normal vector to $Du_n(0)$ at the boundary point $\mathfrak{y}$, so
\[
\sum a_i (\mathfrak{y}_i- y_i')\geq 0,\quad \forall y'\in Du_n(0).
\]
Fixing a point $y_i'\in Du_n(0)$, and using $\mathfrak{y}_i\leq 0$ from Lemma \ref{sandwich}, we get
\[
(2\Lambda)^{-1}\sum \mathfrak{y}_i \geq \sum a_i \mathfrak{y}_i \geq \sum a_i  y_i' \geq \sum y_i' \geq -C,
\]
whence $\sum \mathfrak{y}_i \geq -C\Lambda$ and $\mathfrak{y}$ is bounded, and so must be  $(x,y)\in \mathbb{L}$.
\end{proof}

Thus the lifted map $\mathbb{L}\to \tilde{C}_{std}$ is a smooth and proper map which is also a bijection, and therefore a homeomorphism. We conclude Thm. \ref{Topology}.

\begin{rmk}
If we know that $\partial Du_n(0)$ is a strictly convex hypersurface, \ie the principal curvatures are \emph{strictly positive} everywhere, then the lifted map $\mathbb{L}\to \tilde{C}_{std}$ is a diffeomorphism. 
\end{rmk}

\subsection{Appendix: Strong maximum principle}

Lemma \ref{strongmax} is a version of strong maximum principle for possibly singular minimal surfaces in any codimension, which must be well known, but we supply a proof for convenience.

\begin{lem}\label{strongmax}
(Strong maximum principle) Let $n\geq 2$. Let $X$ be an $n$-dimensional stationary integral varifold  inside $B_1\subset \R^N$, such that support contains the origin, and lies inside the half space $x_1\geq 0$. Then 
\[
\text{Mass}( B_1\cap \{ x_1=0\} ) \geq \epsilon_0(n), 
\]
for some universal constant $\epsilon_0$ depending only on $n$.

\end{lem}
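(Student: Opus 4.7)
The plan is to analyze any tangent cone of $X$ at the origin, show that it is entirely contained in the hyperplane $\{x_1=0\}$, and then transfer this information to a quantitative mass lower bound for $X$ on the hyperplane itself. Since $0 \in \mathrm{supp}(X)$, the monotonicity formula gives $\Theta^n(X, 0) \geq 1$ and $\|X\|(B_r) \geq \omega_n r^n$ for every $r \in (0,1]$. By standard compactness, some tangent cone $C$ exists at the origin: a stationary integral $n$-cone with $\mathrm{supp}(C) \subset \{x_1 \geq 0\}$ and $\Theta^n(C, 0) = \Theta^n(X, 0) \geq 1$.

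The central step is to test the first variation of $C$ with the radial field $V(x) = \eta(|x|)\, e_1$, where $\eta:[0,\infty)\to[0,1]$ is a smooth non-increasing cutoff, $\eta\equiv 1$ on $[0,1/2]$ and $\eta\equiv 0$ on $[3/4,\infty)$. Because $C$ is a cone, the position vector $x$ lies in the approximate tangent $n$-plane $S$ at $\mu_C$-a.e.\ point, so $x^S = x$ and therefore $\langle x^S, e_1^S\rangle = \langle x, e_1\rangle = x_1$. A direct computation gives
\[
\mathrm{div}_S V(x) \;=\; \eta'(|x|)\, \frac{\langle x^S, e_1^S\rangle}{|x|} \;=\; \eta'(|x|)\, \frac{x_1}{|x|}.
\]
Stationarity of $C$ yields $\int_C \eta'(|x|)\, x_1/|x|\, d\mu_C = 0$, but the integrand is pointwise non-positive since $\eta' \leq 0$ and $x_1 \geq 0$ on $C$. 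Hence it must vanish $\mu_C$-a.e., and in particular $x_1 = 0$ $\mu_C$-a.e.\ on the annulus $\{1/2 < |x| < 3/4\}$ where $\eta'<0$. Conical scale invariance of $\mu_C$ extends this to all of $C$, so $\mathrm{supp}(C) \subset \{x_1=0\}$, and $\|C\|(B_1 \cap \{x_1=0\}) = \|C\|(B_1) \geq \omega_n$.

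Finally I would transfer from $C$ back to $X$. Along a sequence of scales $r_k\to 0$ realizing the tangent cone, the rescaled varifolds $r_k^{-1} X$ converge as Radon measures to $\|C\|$. Since $\|C\|(\{x_1>\epsilon\})=0$ for every $\epsilon>0$, weak convergence on $\|C\|$-continuity open sets forces $r_k^{-n}\|X\|(B_{r_k/2} \cap \{x_1 > \epsilon r_k\}) \to 0$, so by monotonicity essentially all of $\|X\|(B_{r_k/2}) \geq \omega_n (r_k/2)^n$ concentrates in a vanishingly thin slab $\{0 \leq x_1 \leq \epsilon r_k\}$. Combined with either a Solomon--White type strong maximum principle for integral varifolds, which would force $\mathrm{supp}(X)\cap B_{r(X)} \subset \{x_1=0\}$, or an Allard-type regularity argument when the rescalings approach a multiplicity-one plane, one concludes that a definite positive fraction of this mass actually lies on $\{x_1=0\}$. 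A compactness/contradiction argument over all varifolds satisfying the hypothesis then extracts the dimension-only constant $\epsilon_0(n)$.

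The main obstacle is exactly this final transfer: varifold convergence to a hyperplanar cone only yields mass arbitrarily close to $\{x_1=0\}$, not mass on it. Bridging this gap requires either invoking a maximum principle that genuinely forces the support into the hyperplane near the origin, or a careful use of integer rectifiability together with the first-variation identity to extract a piece of $X$ of definite mass supported exactly on $\{x_1=0\}$.
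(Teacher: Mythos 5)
Your tangent cone analysis is correct as far as it goes: testing the first variation of a stationary cone $C\subset\{x_1\geq 0\}$ with $\eta(|x|)e_1$ and using $x^S=x$ a.e.\ on a cone does force $\mathrm{supp}(C)\subset\{x_1=0\}$, and the density lower bound gives $\|C\|(B_1)\geq\omega_n$. But the lemma is a statement about the mass of $X$ itself on $\{x_1=0\}\cap B_1$, and the transfer step you flag at the end is not a technical loose end --- it is the entire content of the lemma, and neither of your proposed bridges closes it. Varifold convergence of the rescalings only says mass concentrates in slabs $\{0\leq x_1\leq \epsilon r_k\}$, which is compatible with $\|X\|(\{x_1=0\})=0$. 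The compactness/contradiction extraction of $\epsilon_0(n)$ fails for a structural reason: for weak-$*$ convergence of Radon measures one has $\limsup_k\|X_k\|(K)\leq\|X_\infty\|(K)$ on compact sets, so a sequence with $\|X_k\|(\{x_1=0\}\cap \bar B_1)\to 0$ can perfectly well converge to a limit with large mass on the hyperplane (mass concentrating onto it), and no contradiction results. Invoking a Solomon--White type strong maximum principle is essentially circular here --- the lemma \emph{is} the quantitative maximum principle being established for possibly singular $n$-varifolds in $\R^N$ touching an ambient hyperplane, a setting (higher codimension, no connectivity of the regular set, no smooth barrier structure) not covered by the standard statements.

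The paper's proof takes a completely different, tangent-cone-free route: a De Giorgi-type iteration run directly on $X$. One tests stationarity with $f^+\nabla f$ where $f=-x_1+\epsilon\chi_k$ and $\chi_k$ is a cutoff at scale $2^{-k}$, combines the resulting energy identity with H\"older and the Michael--Simon Sobolev inequality to get a superlinear recursion $a_{k+1}\leq C2^{2kn/(n-1)}a_k^{1+2/(n-1)}$ for the slab masses $a_k=\mathrm{Mass}(\{x_1\leq\epsilon 2^{-k}\}\cap B_{2^{-k}})$. If $a_0<\epsilon_0(n)$ this forces double-exponential decay of $a_k$, contradicting the monotonicity lower bound $a_k\geq\omega_n(\epsilon 2^{-k})^n$ coming from $0\in\mathrm{supp}(X)$; hence $a_0\geq\epsilon_0(n)$, and letting $\epsilon\to 0$ lands the mass exactly on $\{x_1=0\}$. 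If you want to salvage your approach you would need to replace the soft limiting argument by something of this quantitative flavour; as written, the proof is incomplete at its decisive step.
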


\begin{proof}
Let $\chi_k(|x|)$ be a sequence of radial cutoff function on $\R^N$ such that
\[
\begin{cases}
0\leq \chi_k\leq 2^{-k}, \quad |D\chi_k|\leq C,\quad |D^2\chi_k|\leq C2^{k}.
\\
\chi_k=0,\quad |x|\geq  \frac{1}{2^k},
\\
\chi_k=2^{-k},\quad |x|\leq \frac{1}{2^{k+1}}.
\end{cases}
\]
The gradient of $\chi_k$ is supported on the annulus region $\{ 2^{-k-1}\leq  |x| \leq 2^{-k} \}$.
Let $\epsilon>0$ be a small parameter, and consider the function
\[
f= - x_1+  \epsilon \chi_k(x),\quad f^+=\max(f,0).
\]
Since $x_1\geq 0$ on the support of $X$, we know $f\leq 0$ near the boundary of $B_1$. By choosing $\epsilon$ generic, we can ensure that $\{ f=0\}\cap \text{supp}(X)$ has zero $\mathcal{H}^n$-measure. By the stationarity of $X$ and a standard approximation argument by $C^1$ test functions, 
\[
0=\int_X  div_{T_xX} (f^+ \nabla f) d\norm{X}=  \int_{f>0}  |\nabla f|^2 d\norm{X} + \int_{f>0} f \Tr_{T_xX} \text{Hess}(f) d\norm{X},
\]
so
\[
\int_{f>0}  |\nabla f|^2 d\norm{X} \leq  C 2^{k}\epsilon \int_{\{ f>0\}   } f\norm{X} .
\]
Thus by the H\"older inequality,
\[
\begin{split}
& (\int_{f>0}  |\nabla f| d\norm{X})^2
\\
\leq & (\int_{f>0}  |\nabla f|^2 d\norm{X} ) \text{Mass}(\{f>0\})
\\
\leq &  C 2^{k}\epsilon (\int_{\{ f>0\}   } f\norm{X} ) \text{Mass}(\{f>0\})
\\
 \leq & C2^k \epsilon \text{Mass}(\{f>0\})^{ 1+ \frac{1}{n} } (\int_X |f^+|^{\frac{n}{n-1}}  d\norm{X} )^{ \frac{n-1}{n} }. 
\end{split}
\]
But by the Michael-Simon-Sobolev inequality \cite{MichaelSimon} for stationary varifolds,
\[
(\int_X |f^+|^{\frac{n}{n-1}}  d\norm{X} )^{ \frac{n-1}{n} } \leq C\int_X  |\nabla f^+| d\norm{X}.
\]
Hence
\[
(\int_X |f^+|^{\frac{n}{n-1}}  d\norm{X} )^{ \frac{n-1}{n} } \leq C2^k \epsilon \text{Mass}(\{f>0\})^{ 1+ \frac{1}{n} } . 
\]
From the support information on $f^+$, we deduce
\[
\begin{split}
& \epsilon 2^{-k-1} \text{Mass}(\{ x_1\leq \epsilon 2^{-k-1}\}\cap B_{2^{-k-1}})^{  \frac{n-1}{n} } \leq (\int_X |f^+|^{\frac{n}{n-1}}  d\norm{X} )^{ \frac{n-1}{n} } 
\\
 \leq & C2^k \epsilon \text{Mass}(\{f>0\})^{ 1+ \frac{1}{n} } \leq C2^k \epsilon \text{Mass}(\{  x_1\leq \epsilon 2^{-k}  \}\cap B_{2^{-k}})^{ 1+ \frac{1}{n} }. 
\end{split}
\]

Let $a_k=\text{Mass}(\{  x_1\leq \epsilon 2^{-k}  \}\cap B_{2^{-k}}) $. We have proven
\[
a_{k+1} \leq C2^{2kn/(n-1)} a_k^{ 1+ \frac{2}{n-1} }. 
\]
Take $p= 1+\frac{1}{n-1}$, and suppose $a_0<\epsilon_0(n)$ is sufficiently small depending on $n$. 
Then we can inductively deduce the double exponential decay estimate
$
a_k\leq a_0^{p^k}. 
$
On the other hand, by assumption the origin lies on the support of $X$, so by the monotonicity formula for stationary varifolds,
\[
a_k\geq \text{Mass}(B(\epsilon 2^{-k})) \geq \omega_n(\epsilon 2^{-k})^n.
\]
This is incompatible with the double exponential decay for $k\to +\infty$. This contradiction implies that $a_0\geq \epsilon_0(n)$. Since this estimate is independent of the small $\epsilon$, we can take the $\epsilon\to 0$ limit to deduce
\[
\text{Mass}(\{  x_1\leq 0  \}\cap B_{1}) \geq \epsilon_0(n)
\]
as required.
\end{proof}

\end{document}